\numberwithin{equation}{section}
\numberwithin{figure}{section}
\renewcommand{\baselinestretch}{1.05}
\newcommand{\foraa}{\text{for a.a.\ }}
\newcommand{\aein}{\text{ a.e.\ in }}
\newcommand{\weakto}{\rightharpoonup}
\newcommand{\down}{\downarrow}
\newcommand{\BV}{\rmB\rmV}
\newcommand{\cont}{\mathrm{co}}
\newcommand{\discr}{\mathrm{J}}
\newcommand{\mespace}{\calX}
\newcommand{\extQ}{{\calQ_T}}
\newcommand{\gph}{\mathbf{q}}
\newcommand{\AC}{\mathrm{AC}}
\newcommand{\VAR}{\mathop{\mathrm{Var}}}
\newcommand{\LIP}{\calG}
\newcommand{\sppt}{\mathop{\mathrm{sppt}}}
\newcommand{\ITEM}[1]{\ensuremath{\text{\upshape #1}}}
\newcommand{\whT}{S}
\newcommand{\Slope}[3]{|\pl_q #1|\left(#2,#3\right)}
\newcommand{\slope}[2]{|\pl_q #1|(#2)}
\newcommand{\Slopename}[1]{|\pl_q #1|}
\newcommand{\Indicator}[2]{I_{#1}(#2)}
\newcommand{\distS}{S}
\newcommand{\dissS}{\calS}
\newcommand{\DS}[4]{\dissS_{#1}(#2;#3,#4)}
\newcommand{\dS}[4]{\distS_{#1}(#2;#3,#4)}
\newcommand{\bigset}[2]{\Big\{\:#1\:\Big|\:#2\:\Big\}}
\newcommand{\malpha}{m}
\definecolor{ddcyan}{rgb}{0,0.2,1.0}
\newcommand{\OLD}[1]{{\color{red}\sffamily\tiny #1}}
\newcommand{\COMMENT}[1]{{\footnotesize\sffamily\color{red} #1}}
\newcommand{\GCOMMENT}[1]{{\footnotesize\sffamily\color{red} #1}}
\renewcommand{\COMMENT}[1]{}
\renewcommand{\GCOMMENT}[1]{}
\renewcommand{\OLD}[1]{}
\def\RRSS{\color{ddcyan}}
\def\RREE{\color{black}}
\newcommand{\RCOMMENT}[1]{{\footnotesize\sffamily\color{red} #1}}
\renewcommand{\RRSS}{\color{black}}
\renewcommand{\RCOMMENT}[1]{}
\begin{document}

\title{Modeling solutions with jumps \\
for rate-independent systems \\
on metric spaces } 
\date{19 June 2008 }

\author{
Alexander Mielke\,%
\footnote{\emph{Weierstra\ss-Institut,
    Mohrenstra\ss{}e 39, 10117 D--Berlin and Institut f\"ur
    Mathematik, Humboldt-Universit\"at zu
    Berlin, Rudower Chaussee 25, D--12489 Berlin (Adlershof), Germany}
e-mail: {\tt mielke\,@\,wias-berlin.de}},
Riccarda Rossi\,%
\footnote{\emph{Dipartimento di Matematica, Universit\`a di
 Brescia, via Valotti 9, I--25133 Brescia, Italy,}
 e-mail: {\tt riccarda.rossi\,@\,ing.unibs.it}},
  and
Giuseppe Savar\'e\,%
\footnote{\emph{Dipartimento di Matematica ``F.\ Casorati'',
Universit\`a di Pavia.
 Via Ferrata, 1 -- 27100 Pavia, Italy}
email: {\tt giuseppe.savare\,@\,unipv.it}}
}
\maketitle

\begin{abstract}
Rate-independent systems allow for solutions with jumps that need
additional modeling. Here we suggest a formulation that arises as
limit of viscous regularization of the solutions in the extended
state space. Hence, our \emph{parametrized metric solutions} of a
rate-independent system are absolutely
continuous mappings from a parameter interval into the extended
state space. Jumps appear as generalized gradient flows during which
the time is constant.  The closely related notion of
\emph{BV solutions} is developed afterwards.
Our approach is based on  the abstract theory of generalized
gradient flows in metric spaces, and comparison with other notions of
solutions is given.
\end{abstract}
\par \noindent
{\bf AMS Subject Classification}: 49Q20, 58E99.

\section{Introduction}
\label{s:intro}

This paper is concerned with the analysis of different solution
notions for rate-independent evolutionary systems. The latter arise
in a very broad class of mechanical problems, usually in connection
with hysteretic behavior. With no claim at completeness, we may
mention for instance elastoplasticity, damage, the quasistatic
evolution of fractures, shape memory alloys, delamination and
 ferromagnetism, referring to \cite{Miel05ERIS}
for a  survey  \RRSS of \RREE the modeling of rate-independent
phenomena.

Because of their relevance in applications, the analysis of these
systems  has attracted some attention over the last decade, also in
connection with the  issue of their proper formulation. In fact, in
several situations rate-independent problems may be recast in the
form of a doubly nonlinear evolution equation involving two {\em
energy functionals}, namely
\begin{equation}
\label{subdif-form} \pl_{\dot{q}}\calR  (q(t), \dot{q}(t)) +
\pl_q \calE(t,q(t)) \ni 0 \qquad \text{in $Q'$} \qquad \foraa\, t \in
(0,T)\,,
\end{equation}
where $Q$ is a separable Banach space, $\calR :Q \times Q
   \to [0,\infty]$   a \emph{dissipation} functional and
$\calE: [0,T ]\times Q  \to  (-\infty,\infty] $ an \emph{energy
potential},
  $\pl_{\dot{q}}$ and $\pl_q $ denoting  their subdifferential with respect to the second variable.
Rate-independence is rendered through $1$-homogeneity of the
functional $\calR $ with respect to its second variable. Indeed,
assuming that $ \calR (q,\gamma v)=\gamma \calR (q,v)$ for all  $
\gamma \geq 0 $ and $(q, v) \in Q \times Q,$ one has that equation
\eqref{subdif-form} is invariant for time-rescalings. This captures
the main feature of this kind of processes, which are driven by  an
external loading set on a time scale much slower than the time scale
intrinsic to the system, but still fast enough to prevent
equilibrium. Typically, this quasistatic behavior originates in the
limit of systems with a viscous, rate-dependent dissipation.

The formulation of rate-independent problems in terms of the
subdifferential inclusion \eqref{subdif-form} has been thoroughly
analyzed in \cite{MieThe04RIHM}, in the case of a reflexive Banach
space.  Existence of solutions to the Cauchy problem for
\eqref{subdif-form} is proved through approximation by time
discretization and solution of incremental minimization problems.
However, in many applications the energy $\calE(t,\cdot)$ is neither
smooth nor convex, and the state space $Q$ is often neither
reflexive nor the dual of a separable Banach space (see e.g.
\cite{KruZim07?EPNR} for energies having linear growth at infinity).
Furthermore, $Q$ may even lack a linear structure (in these cases we
will denote it by the calligraphic letter $\calQ$), like for
finite-strain elastoplasticity \cite{Miel03EFME,MaiMie08?GERI} or
for quasistatic evolution of fractures \cite{DaFrTo05QCGN}.

In such situations, the differential formulation \eqref{subdif-form} cannot be
used.  In \cite{MieThe99MMRI,MiThLe02VFRI}, the concept of \emph{energetic
  solution} for general \emph{rate-independent energetic systems}
$(\calQ,\calE,\calD)$ has been introduced, by replacing the
infinitesimal metric $\calR$ of the subdifferential formulation
\eqref{subdif-form} by a global \emph{dissipation distance} $\calD:
\calQ \times \calQ \to [0,\infty]$.  This formulation, see Section
\ref{ss:en-sln}, is \emph{derivative-free}, and thus applies to
solutions with jumps and can be used in very general frameworks,
\RRSS like, for example,  in a topological space $\calQ$,  with
$\calE$ and $\calD$  lower semi-continuous only, \RREE see
\cite{MaiMie05EREM,Miel05ERIS,FraMie06ERCR}. Energetic solutions are
very flexible and allow for a quite general existence theory;
however, the global stability condition, asking that $q(t)$ globally
minimizes \RRSS the map \RREE $\wt q \mapsto \calE(t,\wt
q)+\calD(q(t),\wt q)$, implies that solutions jump earlier as
physically expected, since they are forced to leave a locally stable
state, see e.g.\ \cite[Ex.\,6.1]{Miel03EFME} or
\cite[Ex.\,6.3]{KnMiZa07?ILMC}, and Example~\ref{ex:5.1} below.
Moreover, existence of energetic solutions is proved via time
discretization and incremental \emph{global minimization}, but, as
discussed in \cite[Sec.\,6]{Miel03EFME}, \emph{local minimization}
would be more appropriate both from the perspective of modeling and
of numerical algorithms.

In response to these issues, in \cite{EfeMie06RILS} a
\emph{vanishing viscosity approach} was proposed to derive new
solution types for rate-independent systems $(\calQ,\calR,\calE)$.
There, \RRSS $\calQ$ is assumed to be a \emph{finite-dimensional
Hilbert space} $Q$ \RREE and $\calE \in \rmC^1 ([0,T] \times Q)$.
The natural viscous approximation of \eqref{subdif-form} is obtained
by adding a quadratic term to the dissipation potential, viz.\
$\calR_\eps (q,v) =  \calR (q,v)+\frac{\eps}2 \| v \|^2$, and leads
to the doubly nonlinear equation
\begin{equation}
\label{viscous-subdif}
\eps \dot{q}(t) + \pl_{\dot{q}}\calR  (q(t), \dot{q}(t)) +
\pl_q \calE(t,q(t)) \ni 0 \qquad \foraa\, t \in (0,T)\,.
\end{equation}
Using dim$\!\;Q <\infty$, the  existence of solutions $q_\eps\in
\rmH^1([0,T];Q)$ is obvious and passing to the limit $\eps \searrow
0$ in \eqref{viscous-subdif} leads to new solutions and to a finer
description of the jumps, which occur later than for energetic
solutions. The key idea (see Section~\ref{s:mech.mot}) is that the
limiting solution at jumps shall follow a path which somehow keeps
track of the viscous approximation.
To exploit this additional information, one has to go over to an
extended state space: reparametrizing the approximating viscous
solutions $q_\eps$ of \eqref{viscous-subdif} by their arclength
${\tau}_\eps$, and introducing the rescalings $\wh{t}_\eps =
\tau_\eps^{-1}$ and $\wh{q}_\eps= q_\eps \circ \wh{t}_\eps$, one
studies the limiting behavior of the sequence $\{( \wh{t}_\eps,
\wh{q}_\eps) \}_\eps $ as $\eps \down 0$.  Hence, in
\cite{EfeMie06RILS} it was proved that (up to a subsequence), $\{(
\wh{t}_\eps, \wh{q}_\eps) \}_\eps $ converges to a pair
$(\wh{t},\wh{q})$, whose evolution encompasses both dry friction
effects and, when the system jumps, the influence of rate-dependent
dissipation. In fact, the jump path may be completely described by a
gradient flow equation, which leads to this interpretation: jumps
are fast (with respect to the slow external time scale) transitions
between two metastable states, during which the system switches to a
viscous regime.  Furthermore, solutions of the limiting
rate-independent problem can be constructed by means of a
time-discretization scheme featuring local, rather than global,
minimization.

This paper provides the first step of the generalization of these
ideas to the much more general metric framework using the concept
\RRSS of \RREE \emph{curves of
  maximal slope},  \RRSS which dates back \RREE to the pioneering
paper \cite{DeMaTo80PEMS}. We also refer to the recent
monograph \cite{AmGiSa05GFMS}, the references therein, and to
\cite{RoMiSa08MACD}.
The general setup starts with a
\[
\text{complete metric space $(\mespace ,d)$}
\]
and introduces the \emph{metric velocity}
\begin{equation}
\label{e:1-mder}
 |q'|:= \lim_{h \searrow 0}\frac{d(q(t),q(t{+}h))}{h}=\lim_{h \searrow
   0}\frac{d(q(t{-}h),q(t))}{h}\,,
\end{equation}
which is defined a.e.\ along an absolutely continuous curve $q:[0,T]\to
\mespace$.

This replaces the norm of the derivative \RRSS $q'$ \RREE in the
smooth setting, and, in the same way, the  norm of the
(G\^{a}teaux)-derivative or the subdifferential of a functional
$\calF: \mespace  \to (-\infty,\infty]$ is replaced by the
\emph{local slope} of $\calF$ in $q\in \mathrm{dom}(\Psi)$, which
\RRSS is \RREE defined by
\begin{equation}
\label{1-local-slope}
 \slope\calF q:= \limsup_{v \to q}
\frac{\left(\calF(q) -\calF(v)\right)^+}{d(q,v)}\,,
\end{equation}
\RCOMMENT{Here I replaced $\Psi$ by $\calF$ because we use $\Psi$
for denoting the dissipation functional, and I thought that this
might be a bit confusing, as we always consider the slope of the
energy functional} where $(\cdot)^+$ denotes the positive part. With
these concepts, the viscous problem \eqref{viscous-subdif} has the
equivalent metric formulation
\begin{equation}
\label{e:mform} \frac{\rmd}{\rmd t}\calE (t,q(t))-
  \pl_t\calE (t,q(t)) \leq -(|{q}'|(t)+ \frac\eps{2}|{q}'|^2(t))
  -\frac1{2\eps}\left(\big(\Slope\calE t{q(t)}-1\big)^+\right)^2,
\end{equation}
for a.a.\ $t \in (0,T )$, see Section~\ref{sez3.1} for further details.
It was proved in \cite[Thm.\,3.5]{RoMiSa08MACD} that, under
suitable assumptions on $\calE$, for every $\eps>0$ the related Cauchy
problem has at least one solution $q_\eps \in \AC([0,T];\mespace )$.

 Following the approach of \cite{EfeMie06RILS}, for every $\eps >0$
 we now consider the
 (arclength) rescalings $(\wh{t}_\eps, \wh{q}_\eps)$ associated with
 $q_\eps$, which in turn fulfill a rescaled version
 of \eqref{e:mform}, cf.\ \eqref{e3.11}.
   \OLD{ which fulfill the rescaled  version of \eqref{e:mform}
 \begin{equation}
 \label{rescale-eps}
 \begin{aligned}
\frac{\rmd}{\rmd s}&\calE (\wh{t}_\eps(s),\wh{q}_\eps(s))
-\pl_t\calE (\wh{t}_\eps(s),\wh{q}_\eps(s))\wh{t}_\eps'(s)
 \\ & \leq
-\left(|\wh{q}_\eps'|(s)+\frac\eps{2\wh{t}_\eps'(s)}|{q}'|^2(s)
\right) -
\frac{\wh{t}_\eps'(s)}{2\eps}\left(\big(\Slope\calE t{q(t)}-1\big)^+\right)^2\,,
\end{aligned}
\end{equation}
almost everywhere on some interval $ (0,\wh{T}_\eps)$.} Under suitable
assumptions, in Theorem~\ref{3.3} we shall show that, up to a subsequence,
$\{(\wh{t}_\eps,\wh{q}_\eps)\}$ converges to a limit curve $(\wh{t}, \wh{q})
\in \AC ([0,\whT]; [0,T] \times \mespace )$ such that
\begin{subequations}
\label{mprif}
\begin{align}
\label{mprifa}
&\begin{aligned}
&  \text{$\wh{t}: [0,\whT] \to [0,T]$ is nondecreasing, } \\
&  \wh{t}'(s) + |\wh{q}'|(s) >0 \quad \foraa\, s \in [0,\whT],
\end{aligned}
\\[0.5em]
\label{mprifb}
&\begin{aligned}
& \left.\ba{@{}ccc} \wh{t}'(s)>0&\Longrightarrow&
 \Slope\calE {\wh{t}(s)}{\wh{q}(s)}\leq 1,\\
|\wh{q}'|(s)>0&\Longrightarrow&
  \Slope\calE {\wh{t}(s)}{\wh{q}(s)}\geq 1,
 \ea \right\} \text{ for a.a.\ }s \in [0,\whT]\,,
\end{aligned}
\\[0.1em]
\intertext{and the \emph{energy identity}}
\label{mprifc}
&\begin{aligned}
 \frac{\rmd}{\rmd s}\calE (\wh t(s),\wh q(s))
  &-\pl_t\calE (\wh t(s),\wh q(s))\, \wh t'(s) =
   \langle\rmD_q\calE (\wh{t}(s),\wh{q}(s)),\wh{q}'(s)\rangle\\
  & =-|\wh{q}'|(s)\, \Slope\calE{\wh{t}(s)}{\wh{q}(s)} \
   \ \foraa  s\in(0,\whT)\,,
\end{aligned}
\end{align}
\end{subequations}
holds.  A pair $(\wh{t}, \wh{q}):[s_0,s_1]\to [0,T]\ti \mespace$
satisfying \eqref{mprif}
(with $[0,S]$ replaced by $[s_0,s_1]$)
is called \emph{parametrized metric solution} of the
rate-independent system $(\mespace ,d,\calE)$.

Indeed, the very focus of this paper is on getting insight into the
properties of parametrized metric solutions and comparing them with
the other solution notions for rate-independent evolutions. That is
why, in order to avoid technicalities and to highlight, rather, the
features of our approach, throughout the next sections we shall work
in a technically simpler setup, in which the state space $\mespace $
is a finite-dimensional manifold, endowed with a (Finsler) distance
$d$ associated with a $1$-homogeneous dissipation functional
$\calR:\rmT\calQ\to [0,\infty)$, and an energy \RRSS $\calE \in
{\rmC}^1( [0,T] \times \calQ )$. \RREE \RCOMMENT{Here I used $[0,T]
\times \calQ $ as we haven't yet defined $\extQ$.}
The fully general
metric framework is postponed to the forthcoming paper
\cite{MiRoSa08?VVLM}, see also Section~\ref{s:outlook}.

The notion of parametrized metric solution $(\wh{t},\wh{q})$
generalizes the outcome of the finite-dimensional vanishing
viscosity analysis of \cite{EfeMie06RILS} and hence allows for the
same mechanical interpretation, see Remark~\ref{rem:mechanical}.
Namely, according to whether either of the derivatives $\wh{t}'$ or
$|\wh{q}'|$  is null or strictly positive, one distinguishes in
\eqref{mprifb} three regimes: sticking, rate-independent evolution,
and switching to a viscous regime (in correspondence to jumps of the
system from one metastable state to another). In this metric setup
as well, we show that the behavior of the system along a jump path
is described by a generalized gradient flow.  This can be seen more
clearly when considering the non-parametrized solution $q$
corresponding to the pair $(\wh{t},\wh{q})$. The latter functions
are called \emph{BV solutions} of $(\calQ ,d,\calE)$ and are
\emph{pointwise limits of the un-rescaled} vanishing viscosity
approximations $q_\eps$, see Definition~\ref{def:BVsln}  and
Section~\ref{ss:BB.sol} for an analysis of their properties. In
particular, we shall show how to pass, by  means of a suitable
transformation, from a  (truly jumping) $\BV $ solution $q$ to a
(``virtually'' jumping) parametrized solution $(\wh{t},\wh{q})$, and
conversely.

In Section~\ref{s:other.sol}, we compare the notion of BV solutions
with other solutions concepts, namely with the {energetic} solutions
of \cite{MieThe99MMRI,MiThLe02VFRI}, and with the {approximable} and
{local} solutions of \cite{KnMiZa07?ILMC,ToaZan06?AVAQ,Cagn08?VVAF}
(suitably rephrased in the metric setting, see
Definitions~\ref{d:4.1}, \ref{d4.2}, and~\ref{d4.5}). In
Section~\ref{ss:Vis.sol} we review the notion of
\emph{$\Phi$-minimal solutions} of a rate-independent evolutionary
system, proposed in \cite{Visi01NAE} using a \emph{global
variational principle} in terms of a suitably defined partial order
relation between trajectories. First, we conclude that the notion of
local solution is the most general concept, including energetic and
BV solutions, whereas BV solutions \RRSS  encompass \RREE
approximable and $\Phi$-minimal solutions. Moreover, our notion of
BV solutions has ``more structure'', which makes it robust with
respect to data perturbations (cf.
Remark~\ref{rem:point-forward-stability}), whereas neither
approximable nor $\Phi$-minimal solutions are upper-semicontinuous
with respect to data perturbations.

Further insight into the comparison between the various solution
notions is provided by the examples presented in
Section~\ref{s:examples}, which are one or two-dimensional, such
that the set of all solutions can be discussed easily. The
one-dimensional case in fact relates to crack growth (under the
assumption of a prescribed crack path), which was treated in
\cite{ToaZan06?AVAQ,Cagn08?VVAF,NegOrt07?QSCP,KnMiZa07?ILMC}. The
solution concepts developed there are also based on the vanishing
viscosity method. In the latter case, the solution type  \RRSS in
fact \RREE  coincides with our notion of BV solution.  We
postpone the more difficult PDE applications to
\cite{MiRoSa08?VVLM}, where we are going to combine the notions of
this paper with the methods of \cite{RoMiSa08MACD} to develop the
present ideas in the infinite-dimensional or fully metric setting.
 Related ideas using the vanishing viscosity method for PDEs
are found in  \cite{DDMM07?VVAQ}, for a model for elastoplasticity
problems with softening, and  in \cite{MieZel08?VVLP}, for general
parabolic PDEs with rate-independent dissipation terms.

\section{Setup and mechanical motivation}
\label{s:mech.mot}

We consider a manifold $\calQ $ that contains the states of our
system. The energy $\calE $ of the system depends on the time
$t\in[0,T]$ and the state $q\in\calQ $.
Throughout the paper, we shall assume that $\calE \in\rmC^1(\extQ )$, where
$\extQ=[0,T]\ti\calQ$ denotes the extended state space.
The evolution of the system is governed by a balance between the potential
restoring force $-\rmD_q\calE (t,q)$ and a frictional force $f$. The latter is
given by a continuous dissipation potential $\calR :\ \rmT\calQ \to[0,\infty)$, in the
form $f\in\pl_{\dot{q}}\calR (q,\dot{q})$. We generally assume that $\calR
(q,\cdot):\ \rmT_q\calQ \to[0,\infty)$ is convex and $\pl_{\dot{q}}\calR
(q,\dot{q})\subset\rmT^*_q\calQ $ is the set-valued subdifferential. Hence,
the system is governed by the differential inclusion
\begin{equation}\label{e2.1} 0
  \in\pl_{\dot{q}}\calR (q(t),\eps\dot{q}(t))+\rmD_q\calE (t,q(t))
   \subset\rmT_q^*\calQ \,,\qquad t \in (0,T)\,,
\end{equation}
in which we have introduced a small parameter $\eps >0$ to
indicate that we are on a very slow time scale.

Further, we suppose  that $\calR = \calR_1 +
\calR_2$, where $\calR_1: \
\rmT\calQ \to[0,\infty)$ and $\calR_2 : \
\rmT\calQ \to[0,\infty)$ are such that for every $q \in
\calQ $
\[
\begin{aligned}
\calR_1(q,\cdot) \text{ is convex and homogeneous of degree } 1,
\\
\calR_2(q,\cdot) \text{ is convex and homogeneous of degree } 2.
\end{aligned}
\]
Note that $\calR_j(q,\gamma v)=\gamma^j\calR (q,v)$ implies $\pl\calR_j(q,
\gamma v)=\gamma^{j-1}\pl\calR_j(q,v)$ for all $\gamma \geq 0$ and $(q,v) \in
\rmT\calQ $. Hence, \eqref{e2.1} takes the form
\begin{equation}\label{e2.2}
0\in\pl_{\dot{q}}\calR_1(q(t),\dot{q}(t))+\eps\pl_{\dot{q}}\calR_2(q(t),\dot{q}(t))+\rmD_q\calE (t,q(t))\,,\qquad
t \in (0,T). \end{equation}  We call $\calR_1$ the potential of
rate-independent friction and $\calR_2$ the potential of
viscous friction.
%
%

\begin{remark}
\upshape A prototype of the mechanical situation we aim to model
arises in connection with a system of $k$ particles ($k \geq 1$)
moving in $\R^d$, hence with state-space $ \calQ  = \left\{q=
(q_1, \ldots, q_k) \, : \ q_i \in \R^d \right\}= \R^{kd}\,. $ We
impose rate-independent friction $\calR_1$ and viscous friction $\calR_2$ via
\[
\calR_1 (q, \dot{q}) = \sum_{j=1}^k \mu(q_j) |\dot{q}_j|
\ \text{ and } \  \calR_2 (q, \dot{q}) = \sum_{j=1}^k \frac{\nu(q_j)}2
|\dot{q}_j|^2
\quad \text{for } (q, \dot{q}) \in \R^{kd} \times \R^{kd}\,,
\]
where $|\dot{q}_j|$ is the Euclidean norm of the $j$-th particle
velocity and $\mu, \, \nu: \R^d \to [0,\infty)$ are given
continuous
functions. For $k=1$ the potentials $\calR_j$ are
related by $\calR_2 (q, \dot{q}) = \frac{\nu(q)}{2\mu^2 (q)}\,
\calR_1^2 (q, \dot{q})$,
while for $k=2$ their interplay is more complex.
\end{remark}

 Our aim is to understand the
limiting behavior of the solutions to \eqref{e2.2} for $\eps \to 0$.
In fact, we expect that for $\eps\to 0$  the rate-independent
friction dominates,  but the solution $q^\eps:\ [0,T]\to\calQ $ may
develop sharp transition layers, with $\dot{q}$ of order $1/\eps$.
In the limit we obtain a jump, but in order to characterize the jump
path the viscous potential is crucial.

The key idea  is to study the trajectories $\mathcal{T}_\eps
=\set{(t,q^\eps(t))}{t\in[0,T]}$ in the extended state space
$\extQ$. The point is that the limit of trajectories
$\mathcal{T}_\eps$ may no longer be the graph of a function. To
study the limits via differential inclusions,  we may reparametrize
the trajectories $\mathcal{T}_\eps$ in the form
\[
\mathcal{T}_\eps
=\set{({\wh{t}}_\eps(s),{\wh{q}}_\eps(s))}{s\in[0,S_\eps]},
\]
where ${\wh{t}}_\eps$ is supposed to be nondecreasing and
absolutely continuous.

For passing to the limit it is now helpful to select a family of
parametrizations via
$\malpha_\eps\in\rmL^1_{\text{loc}}((0,\infty))$ converging to
$\malpha$ in $L^1_{\text{loc}}((0,\infty))$, with
$\malpha(s),\malpha_\eps(s)>0$ for a.a. $s \in (0,\infty)$, and to
assume
\begin{equation}\label{e2.3}
{\wh{t}}_{\eps}'(s)+\sqrt{2\calR_2({\wh{q}}_\eps(s),{\wh{q}}_{\eps}'(s))}=
\malpha_\eps(s)
\qquad \foraa\, s \in (0,S_\eps)\,. \end{equation}
Note that this can always
be achieved. In particular, when
\begin{equation}
\label{e:particular}
 \calQ  = \R^d\,, \quad
 \calR_2 (q, \dot{q})=
\frac{1}2 |\dot{q}|^2 \ \ \forall\, (q, \dot{q}) \in \R^d \times
\R^d\,  \quad \text{and $\malpha=\malpha_\eps \equiv 1$\,,}
\end{equation}
 relation \eqref{e2.3}
leads to the arclength parametrization of $\mathcal{T}_\eps$, which
was considered in \cite{EfeMie06RILS}. The total length is
 $ S_\eps:=T+\int_0^T \sqrt{2\calR_2(q_\eps(t),q_\eps'(t))}\, dt$.
Since $S_\eps \to \whT$ up to a subsequence (thanks to standard
energy estimates), it is not restrictive to assume that $S_\eps$ is
independent of $\eps$ by the simple linear rescaling
$  \tilde\malpha_\eps(s)=\malpha_\eps(s S_\eps/S) $.

By the chain rule and the $j$-homogeneity we have
\[
\pl_{\dot{q}}\calR_j(q_\eps(t),\dot{q}_\eps(t))|_{t={\wh{t}}^\eps(s)}=
\left( \frac{1}{{\wh{t}}'_\eps(s)}\right)^{j-1}
\pl_{\dot{q}}\calR_j(\wh{q}_\eps(s),{\wh{q}}'_\eps (s)) \qquad
\foraa\, s \in (0,S_\eps)\,.
\]
Now, using \eqref{e2.3} and defining
\[
\wt{\calR }(q,v)=g(\calR_2(q,v))\quad\text{with }g(r)=
\left\{ \ba{cl}
\log \left( \frac{1}{1-\sqrt{2r}}\right) - \sqrt{2r}&\text{for }
r\in[0,\frac{1}2), \\
\infty & \text{otherwise,} \ea\right.
\]
easy computations (cf. \cite[Thm.\,3.1]{EfeMie06RILS} in the
particular case of \eqref{e:particular}) show that \eqref{e2.2} is
equivalent to \begin{equation}\label{e2.4}
0\in\pl_{\dot{q}}\calR_1(\wh{q}_\eps ,{\wh{q}}_\eps
')+\eps\pl_{\dot{q}}\wt{\calR }\Big(\wh{q}_\eps ,\frac{{\wh{q}}_\eps
'}{\RRSS\malpha_\eps \RREE}\Big)+\rmD_q\calE (\wh{t}_\eps
,\wh{q}_\eps) \quad \aein \, (0,S_\eps)\,. \end{equation}
In this formulation  we may pass to the limit, and  we expect to
obtain the limit problem
\begin{equation}\label{e2.5}
\left. \ba{l}
0\in\pl_{\dot{q}}\wh{\calR }(\wh{q},\frac{{\wh{q}}'}{\malpha})+
\rmD_q\calE (\wh{t},\wh{q}),\\
{\wh{t}}'+\sqrt{2\calR_2(\wh{q},{\wh{q}}')}=\malpha, \ea
\right\} \quad \aein \, (0,\whT)\,,
\end{equation}
\RRSS where \RREE
\[
\wh{\calR }(q,v)=
\left\{\ba{cl}
\calR_1(q,v)&\text{for }\calR_2(q,v)\leq \frac12\,,\\
\infty &\text{for } \calR_2(q,v)>\frac12\,. \ea \right.
\]
Formulation \eqref{e2.5} is in fact a generalization of the one in
\cite{EfeMie06RILS}, where rigorous convergence proofs  of
problem \eqref{e2.4} to \eqref{e2.5} are derived (in the case
$\calQ $ is finite-dimensional). Although $\wh{\calR }$ is
no longer $1$-homogeneous, the limit problem is still
rate-independent: upon adjusting the free function $\malpha$, one
sees immediately that system \eqref{e2.5} is invariant under time
reparametrizations.

In the present work we concentrate on the case $\calR_2(q,v)=
\frac12\calR_1(q,v)^2$, since it is this case which can be generalized to
abstract metric spaces and hence to the infinite-dimensional setting, see
\cite{MiRoSa08?VVLM}.  By introducing the dual norm of a  co-vector
$w\in\rmT^*_q\calQ$
\begin{equation}
  \label{dual_nrom}
  \calR_{1,*}(q,w):=\sup\big\{\langle w,v\rangle:v\in \rmT_q\calQ,
  \ \calR_1(q,v)\le 1\big\},
\end{equation}
the operators $\pl\calR_1(q,\cdot)$ and
$\pl\calR_2(q,\cdot)$ can be characterized by
\begin{subequations}
  \begin{align}
    \label{duality_op:1}
    w\in \pl\calR_1(q,v),\ v\neq0\quad&\Longleftrightarrow\quad
    \calR_{1,*}(q,w)=1,\ \langle w,v\rangle=\calR_1(q,v)>0\,,\\
     \label{duality_op:1bis}
    w\in \pl\calR_1(q,0)\quad&\Longleftrightarrow\quad
    \calR_{1,*}(q,w)\le 1\,,\\
    \label{duality_op:2}
    w\in \pl\calR_2(q,v)\quad&\Longleftrightarrow\quad
         \calR_{1,*}(q,w)=\calR_1(q,v)=\langle w,v\rangle\,,
  \end{align}
 \end{subequations}
and they satisfy $\pl\calR_2(q,v)=\calR_1(q,v) \,\pl\calR_1(q,v)$ and
\begin{equation}
  \label{duality_op:3}
  \calR_{1,*}(q,w)\,\calR_1(q,v)=\langle w,v\rangle
  \quad\Longleftrightarrow\quad
   w \in \lambda\pl\calR_1(q,v)
   \quad\text{for some }\lambda\ge0.
\end{equation}
\begin{proposition}\label{p:2.1}
In the case $\calR_2(q,v)=\frac12\calR_1(q,v)^2$, a pair
$(\wh{t},\wh{q}) \in \AC ([0,\whT]; [0,T] \times
\calQ )$ fulfils \eqref{e2.5} (for some $\malpha \in L^1
(0,\whT)$ with $\malpha(s)>0$ \aein $(0,\whT)$) if and only if there exists a function $\lambda :
(0,\whT) \to (0,\infty)$ such that \begin{equation}\label{e2.6} \left. \ba{l}
0\in\lambda\pl\calR_1(\wh{q},{\wh{q}}')+\rmD_q\calE (\wh{t},\wh{q}),\\
{\wh{t}}'\geq 0,\ \lambda\geq 1,\ (\lambda{-}1){\wh{t}}'\equiv 0,
\\
{\wh{t}}' + \calR_1(\wh{q},{\wh{q}}') >0 \ea \right\} \qquad
\aein \, (0,\whT)\,. \end{equation}
\end{proposition}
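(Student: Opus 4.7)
The plan is to show that the subdifferential inclusion in \eqref{e2.5} is, up to the factor $\lambda$, exactly the inclusion $0 \in \partial \calR_1(\hat q, \hat q') + \rmD_q\calE(\hat t,\hat q)$, and that the constraint $\hat t' + \calR_1(\hat q,\hat q')= \malpha$ together with $\hat t' \ge 0$ encodes the alternative ``$\hat t'>0$ vs.\ $\hat t'=0$'' which in turn fixes $\lambda=1$ or $\lambda\ge 1$. The key observation is the decomposition
\[
\hat\calR(q,w) = \calR_1(q,w) + \Indicator{K(q)}{w},\qquad K(q):=\{w\in \rmT_q\calQ :\calR_1(q,w)\le 1\},
\]
obtained from $\calR_2=\tfrac12\calR_1^2$, so that $\calR_2(q,w)\le \tfrac12$ is equivalent to $w\in K(q)$. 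By the sum rule for subdifferentials of convex functions
\[
\pl_w\hat\calR(q,w) = \pl_w\calR_1(q,w) + N_{K(q)}(w),
\]
and the standard characterization of the normal cone to a convex sublevel set yields $N_{K(q)}(w) = \{0\}$ when $\calR_1(q,w)<1$, and $N_{K(q)}(w)=\{\mu\, \xi : \mu\ge 0,\ \xi\in \pl_w\calR_1(q,w)\}$ when $\calR_1(q,w)=1$, using that $\pl_w\calR_1(q,\cdot)$ is nonempty on $\rmT_q\calQ\setminus\{0\}$ by \eqref{duality_op:1}--\eqref{duality_op:1bis}. In both situations we may write $\pl_w\hat\calR(q,w) = \mu\,\pl_w\calR_1(q,w)$ for some $\mu\ge1$, with the constraint $\mu=1$ whenever $\calR_1(q,w)<1$.

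Next I would apply this with $w=\hat q'/\malpha$, using the $1$-homogeneity of $\calR_1(q,\cdot)$: since $\malpha>0$ a.e., $\pl_w\calR_1(q,\hat q'/\malpha) = \pl_w\calR_1(\hat q,\hat q')$ (constant along positive rays), and the constraint $\hat q'/\malpha\in K(\hat q)$ is $\calR_1(\hat q,\hat q')\le \malpha$, consistent with $\hat t' + \calR_1(\hat q,\hat q')=\malpha$ and $\hat t'\ge 0$. The inclusion in \eqref{e2.5} then reads
\[
0\in \lambda\, \pl_w\calR_1(\hat q,\hat q') + \rmD_q\calE(\hat t,\hat q)
\quad\text{for some }\lambda\ge1,
\]
which is the first line of \eqref{e2.6}. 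The case distinction becomes transparent: if $\hat t'>0$, then $\calR_1(\hat q,\hat q')<\malpha$, hence $\calR_1(\hat q,\hat q'/\malpha)<1$, the normal cone degenerates, and necessarily $\lambda=1$, giving $(\lambda-1)\hat t'=0$; if $\hat t'=0$ we obtain $\calR_1(\hat q,\hat q'/\malpha)=1$ and $\lambda\ge1$ is unconstrained, but $(\lambda-1)\hat t'=0$ holds trivially. The inequality $\hat t'+\calR_1(\hat q,\hat q')>0$ is equivalent to $\malpha>0$ since $\sqrt{2\calR_2(\hat q,\hat q')}=\calR_1(\hat q,\hat q')$.

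For the converse direction, given $(\hat t,\hat q)$ and $\lambda$ as in \eqref{e2.6}, I would \emph{define} $\malpha := \hat t' + \calR_1(\hat q,\hat q')$, which is positive a.e., so that the second equation in \eqref{e2.5} is satisfied. Then I would split according to $\hat t'>0$ (forcing $\lambda=1$, hence $\calR_1(\hat q,\hat q')<\malpha$, i.e.\ $\hat q'/\malpha$ in the interior of $K(\hat q)$, so $\pl\hat\calR(\hat q,\hat q'/\malpha) = \pl\calR_1(\hat q,\hat q')$) and $\hat t'=0$ (so $\calR_1(\hat q,\hat q'/\malpha)=1$ and the above formula for the normal cone supplies the multiplier $\lambda-1\ge0$), reading \eqref{e2.5} back off the result.

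The only real delicate point is the characterization of $N_{K(q)}(w)$ at boundary points $w$ with $\calR_1(q,w)=1$: one needs that $\pl_w\calR_1(q,w)$ does not contain the origin (guaranteed by \eqref{duality_op:1} which gives $\calR_{1,\ast}(q,\xi)=1$ for any $\xi\in\pl_w\calR_1(q,w)$ with $w\ne 0$) and the observation that $K(q)$ has nonempty interior so that Slater's condition holds and the sum rule applies without qualification issues. Once this is in place, the proof reduces to the algebraic identification $\lambda=1+\mu$ indicated above.
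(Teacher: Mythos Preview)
Your proposal is correct and follows essentially the same route as the paper. The paper states the key identity
\[
\pl_{\dot q}\wh\calR(\wh q,\tfrac{\wh q'}{\malpha})=
\begin{cases}
\pl_{\dot q}\calR_1(\wh q,\tfrac{\wh q'}{\malpha}) & \text{if }\calR_1(\wh q,\tfrac{\wh q'}{\malpha})<1,\\
[1,\infty)\cdot\pl_{\dot q}\calR_1(\wh q,\tfrac{\wh q'}{\malpha}) & \text{if }\calR_1(\wh q,\tfrac{\wh q'}{\malpha})=1,
\end{cases}
\]
with the remark ``it is not difficult to see,'' and then argues exactly as you do (0-homogeneity of $\pl\calR_1$, and for the converse defining $\malpha:=\wh t'+\calR_1(\wh q,\wh q')$). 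Your decomposition $\wh\calR=\calR_1+I_{K(q)}$ together with the sum rule and the normal-cone characterization is precisely the justification of that formula, so the two arguments coincide. One small omission: in the converse direction you should note that $\malpha\in L^1(0,\whT)$, which follows from $(\wh t,\wh q)\in\AC$ and continuity of $\calR_1$.
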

\begin{proof}
First, we note that, using the $1$-homogeneity of $\calR_1$,
the second of \eqref{e2.5} may be rewritten as
\begin{equation}
\label{e2.6-1} \frac{{\wh{t}}'}{\malpha}+\calR_1
\left(\wh{q},\frac{{\wh{q}}'}{\malpha} \right)=1 \qquad \aein
(0,\whT)\,.
\end{equation}
Now, it is not difficult to see that
\begin{equation}
\label{e2.6-2}
\pl_{\dot{q}}\wh{\calR }(\wh{q},\frac{{\wh{q}}'}{\malpha}) =
\left\{\ba{cl}
\pl_{\dot{q}}\calR_1(\wh{q},\frac{{\wh{q}}'}{\malpha}) &\quad
\text{if $\calR_1(\wh{q},\frac{{\wh{q}}'}{\malpha}) \in [0,1)
\quad ( \Leftrightarrow \, {\wh{t}}' >0)\,,$}
\\
{}[1, \infty) \, \cdot\,
\pl_{\dot{q}}\calR_1(\wh{q},\frac{{\wh{q}}'}{\malpha}) &\quad
\text{if $\calR_1(\wh{q},\frac{{\wh{q}}'}{\malpha})=1  \qquad
(\Leftrightarrow \, {\wh{t}}' =0)$\,,}
\ea\right.
\end{equation}
where the equivalences in parentheses follow from \eqref{e2.6-1}.
 Combining \eqref{e2.6-2} with the first of \eqref{e2.5} and using that
$\pl \calR_1$ is $0$-homogeneous,
  we
 deduce \eqref{e2.6}.

 Conversely, starting from \eqref{e2.6}, we put $\malpha(s):= {\wh{t}}'(s) +
 \calR_1(\wh{q}(s),{\wh{q}}'(s)) $ for a.a.\ $s \in (0,\whT)$ and note
 that, by the third of \eqref{e2.6}, $\malpha >0$ a.e.\ in $(0,\whT)$
 and $\malpha\in L^1(0,\whT)$, since $\calR_1$ is continuous.
 Using \eqref{e2.6-1} and arguing as in the above lines, one sees that, if the
 pair $({\wh{t}}', \lambda)$ satisfies the second of \eqref{e2.6}, then $
 \lambda\pl_{\dot{q}}\calR_1(\wh{q},{\wh{q}}') = \pl_{\dot{q}}\wh{\calR
 }(\wh{q},\frac{{\wh{q}}'}{\malpha})\,, $ which allows us to deduce the
 differential inclusion in \eqref{e2.5} from the one in \eqref{e2.6}.
\end{proof}

\section{Analysis with metric space techniques}
\label{sez3}

\subsection{Problem reformulation in a metric setting}
\label{sez3.1}

First of all, we complement the setup of Section~\ref{s:mech.mot} by
specifying our assumptions on the rate-independent system
$(\calQ,\calR_1,\calE)$, where $\calQ$ is the ambient space,
$\calR_1$ the dissipation functional, and $\calE$ the energy
functional. The more general setup will be studied in
\cite{MiRoSa08?VVLM}. Namely, we require that
\begin{equation}
\label{assq} \tag{\thesection.Q}
\calQ \text{ is a finite-dimensional and smooth
 manifold,}
\end{equation}
and the energy functional satisfies
\begin{equation} \label{assene} \tag{\thesection.E}
\calE \in \rmC^1 (\extQ)\,.
\end{equation}
The dissipation functional $\calR_1: \rmT\calQ \to[0,\infty)$ is  a
\emph{complete Finsler structure} on $\calQ $ (see e.g.\
\cite[Ch.\,I.1]{BCS00IRFG}), namely
\begin{equation}
\label{asserre} \tag{\thesection.R}
\calR_1 \text{ is continuous on } \rmT\calQ \quad \text{ and }\quad
\forall\, q \in \calQ:\ \calR_1(q,\cdot) \text{ is a norm on }
\rmT_q\calQ \,,
\end{equation}
called \emph{Minkowski norm} in the Finsler setting.
Then, $\calR_1$ induces the (Finsler) distance $d:\calQ\ti \calQ \to
[0,\infty)$:
\begin{displaymath}
d(q_0,q_1):=\min\Big\{\int^1_0\calR_1(\wt{q}(s),{\wt{q}}'(s))\dd s:  \wt{q}\in
\calA(q_0,q_1)\Big\}\,,
\end{displaymath}
where for all $q_0,q_1\in \calQ$ we set
\begin{equation}
\calA(q_0,q_1)=\set{y\in\AC([0,1],\calQ)}{y(0)=q_0,\
  y(1)=q_1}.\label{eq:ACqq}
\end{equation}
 Hence,  $(\calQ,d)$ is a metric space, which we assume to be \emph{complete.}
Like in the previous section, we let $\calR_2 \equiv
\frac12\calR_1^2$. \COMMENT{I collected the three assumptions
\eqref{assq}, \eqref{asserre},
  \eqref{assene} and added the section number, since it will be difficult to
  spot them otherwise.}
%
%
For a curve $q \in \AC ([0,T]; \calQ)$, the
Finsler length of its velocity $q'(t)$ is given by
\begin{equation}
\label{newform1}
 |q'|(t):=\calR_1(q(t),q'(t)), \ \text{
 well-defined for a.a.\ $t \in (0,T)$,}
\end{equation}
and satisfies
\begin{equation}
\label{e:mder} d(q(s),q(t)) \leq \int_s^t |q'|(r)\, \rmd r \qquad
\text{for all } 0 \leq s < t \leq T\,.
\end{equation}
Using $\calR_1$, we define the associated \emph{local slope}
$\Slopename\calE :\extQ\to [0,\infty]$  via
\begin{equation}
\label{newform2} \Slope\calE tq:=\sup_{v\in
\rmT_q\calQ \setminus\{0\}}\frac{\langle\rmD_q\calE
(t,q),v\rangle}{\calR_1(q,v)}
=\calR_{1,*}(q,\rmD_q\calE
(t,q))\,,
\end{equation}
which is the conjugate norm with respect to the Minkowski norm
$\calR_1(q,\cdot)$  of the differential of the energy in the
cotangent space $\rmT^*_q\calQ$.

Using the smoothness of $\calE$ we have that for every curve $(t,q) \in \AC
([s_0,s_1]; \extQ)$ the map $s \mapsto \calE (t(s),q(s))$ is absolutely
continuous and the chain rule for $\calE$ gives
\begin{equation}
\label{classical-ch-rule} \frac{\rmd}{\rmd
s}\calE (t(s),q(s))=\pl_t\calE (t(s),q(s))\,
t'(s)+\langle\rmD_q\calE (t(s),q(s)),q'(s)\rangle
\end{equation}
$\foraa\, s \in (s_0,s_1).$ On the other hand, formulae
\eqref{newform1} and \eqref{newform2}  yield
\begin{equation}
\label{useful-later} \langle\rmD_q\calE (t,q),q')\rangle\geq
-|q'|\ \Slope\calE tq\,.
\end{equation}
Therefore, every $(t,q) \in \AC ([s_0,s_1]; \extQ )$ fulfills the
chain rule inequality
\begin{equation}
\label{e3.6}
\frac{\rmd}{\rmd s}\calE (t(s),q(s))
  -\pl_t\calE (t(s),q(s))t'(s) \,\geq \,
  -\Slope\calE{t(s)}{q(s)}\,|q'|(s) \ \aein (s_0,s_1).
\end{equation}


\paragraph{The metric formulation of doubly nonlinear equations.}

We now see how notions \eqref{newform1} \RRSS and \RREE
\eqref{newform2} so far introduced come into play in the
reformulation of a class of doubly nonlinear evolution equations in
the metric setting $(\calQ,d)$.

Let $\psi:[0,\infty)\to[0,\infty]$ be a lower semicontinuous, nondecreasing,
and convex function and $\psi^*:[0,\infty)\to[0,\infty]$ its conjugate
function (Legendre--Fenchel transform), namely
\[
\psi^*(\xi)=\sup\set{\nu\xi-\psi(\nu)}{\nu\geq 0}.
\]
Following \cite{AmGiSa05GFMS} (see also \cite{RoMiSa08MACD}), a
function $q\in\AC([0,T];\calQ)$ is called a solution of the
$\psi$-gradient system associated \RRSS with \RREE $(\calQ,d,\calE)$
if
\begin{equation}\label{e3.9}
 \frac{\rmd}{\rmd t}\calE (t,q(t))\leq
  \pl_t\calE (t,q(t))-\psi(|q'|(t))-
 \psi^*\big(\Slope\calE t{q(t)}\big)
\quad \aein (0,T)\,.
\end{equation}
It has been proved in \cite[Prop.\,8.2]{RoMiSa08MACD} that $q$ fulfills
\eqref{e3.9} if and only if it solves the doubly nonlinear equation (also
called quasi-variational evolutionary inequality)
\begin{equation}
\label{dne} 0
\in\pl_{\dot{q}}\Psi(q(t),\dot{q}(t))+\rmD_q\calE (t,q(t))
\aein  (0,T)\,, \quad
\text{where }\Psi(q,\dot{q}):= \psi(\calR_1 (q,\dot{q}))\,.
\end{equation}
Under assumptions \eqref{assq}, \eqref{asserre}, and \eqref{assene},
the existence of absolutely continuous solutions to the Cauchy
problem for \eqref{e3.9} follows from
\cite[Thm.\,3.5]{RoMiSa08MACD}. We stress that the simple, but
central duality inequality $\psi(\nu) + \psi^*(\xi) \geq \nu \xi$
for all $\nu,\, \xi \in [0,\infty)$, together with the chain rule
inequality \eqref{e3.6}, enforces equality in \eqref{e3.9}
(ultimately in \eqref{e3.6} as well).

In the rate-independent setting, the natural choice is
\[
\psi_0(\nu)\equiv\nu  \quad
\text{ giving } \quad \psi_0^*(\xi)=\Indicator{[0,1]}\xi\,,
\]
where $ \Indicator{[0,1]}\cdot$ denotes the indicator function of $[0,1]$,
i.e.\ $ \Indicator{[0,1]}\xi= 0 $ if $\xi \in [0,1]$, and $
\Indicator{[0,1]}\xi= \infty $ otherwise. However, simple one-dimensional (not
strictly convex) examples show that we cannot expect existence of absolutely
continuous solutions in this case, cf.\ Example \ref{ex:5.1}. Hence, we
proceed as in Section \ref{s:mech.mot} and consider limits of viscous
regularizations after suitable reparametrizations.

Before doing so, note that \eqref{e3.9} is equivalent to the
parametrized version on some interval~$(s_0,s_1)$, given by
\begin{equation}\label{e3.10}
\ba{l}
\frac{\rmd}{\rmd s}\calE (\wh{t}(s),\wh{q}(s))-\pl_t\calE (\wh{t}(s),\wh{q}(s))\wh{t}'(s)\\
\leq -\psi\Big(\frac{1}{\wh{t}'(s)}|\wh{q}'|(s)\Big)\wh{t}'(s)
-\psi^*\Big(\Slope\calE {\wh{t}(s)}{\wh{q}(s)}\Big)\wh{t}'(s)
\quad \foraa\, s \in (s_0,s_1) \,,
\ea
\end{equation}
where $\wh{q}(s)=q(\wh{t}(s))$ and $\wh{t}'(s)>0$ a.e.\ in $(s_0,s_1)$. In
the rate-independent case,  the right-hand side does not depend on
$\wh{t}'(s)$, because $\psi_0(\nu)=\nu$ implies
$\psi_0(\alpha\nu)=\alpha\psi_0(\nu)$ and
$\psi_0^*(\xi)=\alpha\psi_0^*(\xi)$ for all $\alpha>0$.

\subsection{Rate-independent limit of viscous metric flows}
\label{ss3.2}

We now consider the case of small viscosity added to
the rate-independent dissipation, namely
\begin{equation}
\label{e:small-viscosity}
 \psi_\eps(\nu)=\nu+\frac{\eps}{2}\nu^2 \qquad \forall\, \nu \in
 [0,\infty)\,.
 \end{equation}
We obtain $\psi^*_\eps(\xi)=\frac{1}{2\eps}((\xi{-}1)^+)^2$. Thus,
\eqref{e3.10} takes the form
\begin{equation}\label{e3.11}
\frac{\rmd}{\rmd s}\calE (\wh{t}(s),\wh{q}(s))
  -\pl_t\calE (\wh{t}(s),\wh{q}(s))\,\wh{t}'(s) \leq
-M_\eps\big(\wh{t}'(s),|\wh{q}'|(s),
\Slope\calE{\wh{t}(s)}{\wh{q}(s)}\big)
\end{equation}
for a.a. $s \in (s_0,s_1)$, with
\begin{equation}\label{e3.11+1}
M_\eps\big(\alpha,\nu,\xi\big):=
\alpha\psi_\eps\Big(\frac{\nu}{\alpha}\Big)+\alpha\psi^*_\eps(\xi)=\nu+\frac{\eps}{2\alpha}
{\nu}^2+\frac{\alpha}{2\eps}((\xi{-}1)^+)^2
 \end{equation}
for all $(\alpha,\nu,\xi) \in (0,\infty) \times [0,\infty)^2$.
\COMMENT{DELETE here and move into proof of Theorem \ref{3.3}:
As we have already mentioned,  by the chain rule \eqref{e3.6}, eqn.\
\eqref{e3.11} in fact holds as an equality, which we write in the
integral form
\begin{equation}
\label{int-form}
\ts
\calE (\wh{t}(\sigma_2),\wh{q}(\sigma_2)) +\int_{\sigma_1}^{\sigma_2}
M_\eps\big(\wh{t}'(s),|\wh{q}'|(s),
\Slope\calE{\wh{t}(s)}{\wh{q}(s)}\big) \dd s
= \calE (\wh{t}(\sigma_1),\wh{q}(\sigma_1))   +
\int_{\sigma_1}^{\sigma_2}
\pl_t\calE (\wh{t}(s),\wh{q}(s))\,\wh{t}'(s) \dd s,
\end{equation}
where $\,s_0 \leq \sigma_1 \leq \sigma_2 \leq s_1\,$.}
 Clearly, for fixed $\alpha>0$ the limit as $\eps\searrow 0$ of $M_\eps (\alpha,\nu,\xi)$ gives
$\psi_0(\nu)+\psi^*_0(\xi)$. However, our purpose is to blow-up the
time parametrization whenever jumps occur. Indeed,  the
finite-dimensional case (see \cite{EfeMie06RILS}) suggests that
jumps in the rate-independent evolution will occur at  fixed
rescaled time (i.e., when  $\wh{t}'=0$).
 Hence, we also have to consider the case $\alpha\to 0$ as $\eps\to 0$. For this, note that when $\xi>1$
 $M_\eps(\cdot,\nu,\xi)$ assumes its minimum on $[0,\infty)$ for
 $\alpha^\eps_*=\eps\nu / (\xi-1)^+$,
 corresponding to the
 value $M_\eps\big(\alpha^\eps_*,\nu,\xi\big)=\nu+\nu(\xi{-}1)^+$.
 In any case we have
 \begin{equation}\label{eq3.Meps}
   M_\eps\big(\alpha,\nu,\xi\big)\geq
   M_\mathrm{inf}(\nu,\xi):=\nu+\nu(\xi{-}1)^+ \
   \text{ for all } (\alpha,\nu,\xi) \in (0,\infty) \times [0,\infty)^2\,.
\end{equation}

Thus, we define $M_0: [0,\infty)^3 \to [0,\infty]$ via
\begin{equation}
\label{ex_1_M_0}
 M_0\big(\alpha,\nu,\xi\big):= \left\{\ba{cl}
   M_\mathrm{inf}(\nu,\xi)=
   \nu+\nu(\xi{-}1)^+&\text{for }\alpha=0,\\
   M_\mathrm{sup}(\nu,\xi)=
   \nu+\Indicator{[0,1]}\xi & \text{for }\alpha>0, \ea \right.
\end{equation}
and obtain the following result.

\begin{lemma} \label{l3.M-Gamma} Define $M_\eps:[0,\infty)^3\to [0,\infty]$ via
  \eqref{e3.11+1},  $M_\eps(0,0,\xi)=0$ for all $\xi$, and
  $M_\eps(0,\nu,\xi)=\infty$ for all $\xi$ and $\nu>0$.
  Then, we
  have the following results:
\medskip

\noindent (A)  $M_\eps$ $\Gamma$-converges to $M_0$, viz.
\begin{subequations}
\label{glimit}
\begin{align}\label{gliminf}
\begin{split}
&\text{$\Gamma$-liminf estimate: }\\
&(\alpha_\eps,\nu_\eps,\xi_\eps)\to
 (\alpha,\nu,\xi) \ \Longrightarrow M_0(\alpha,\nu,\xi)\leq
 \liminf_{\eps \searrow 0} M_\eps(\alpha_\eps,\nu_\eps,\xi_\eps)\, ,
\end{split} \\[0.3em]
\label{glimsup}
\begin{split}
&\text{$\Gamma$-limsup estimate: }\\
&\forall\,  (\alpha,\nu  , \xi)\
   \exists \, ((\alpha_\eps,\nu_\eps, \xi_\eps)  )_{\eps>0}:\
  \left\{\ba{l}(\alpha_\eps,\nu_\eps  , \xi_\eps )\to
    (\alpha,\nu, \xi  ) \text{ and }\\
    M_0(\alpha,\nu,\xi)\geq \limsup_{\eps \searrow 0}
    M_\eps(\alpha_\eps,\nu_\eps  , \xi_\eps  ) \, .
  \ea\right.
\end{split}
\end{align}
\end{subequations}
(B) If $(\alpha_\eps ,\nu_\eps) \weakto (\wh\alpha, \wh\nu) $ in
$\rmL^1((s_0,s_1))$ and\/ $\liminf_{\eps\to0} \xi_\eps(s)\ge \wh\xi(s)$
\aein $(s_0,s_1)$,
then
\[
\int_{s_0}^{s_1} M_0(\wh\alpha(s),\wh\nu(s),\wh\xi(s)) \dd s \leq \liminf_{\eps \to 0}
\int_{s_0}^{s_1} M_\eps(\alpha_\eps(s),\nu_\eps(s),\xi_\eps(s)) \dd s\,.
\]
\end{lemma}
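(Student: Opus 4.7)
For part (A) my strategy is to extract the core pointwise bound from AM--GM and then do a case analysis in $(\alpha,\xi)$. Note the elementary inequality
\[
\frac{\eps}{2\alpha}\nu^2+\frac{\alpha}{2\eps}\bigl((\xi{-}1)^+\bigr)^2\ \ge\ \nu(\xi{-}1)^+
\qquad \forall\,\alpha>0,\ \nu\ge 0,\ \xi\ge 0,
\]
which yields the universal lower bound $M_\eps(\alpha,\nu,\xi)\ge M_{\mathrm{inf}}(\nu,\xi)=\nu+\nu(\xi{-}1)^+$ on $\{\alpha>0\}$, and which is saturated by the choice $\alpha=\eps\nu/(\xi{-}1)^+$. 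For the $\Gamma$-liminf I let $(\alpha_\eps,\nu_\eps,\xi_\eps)\to(\alpha,\nu,\xi)$ and split three cases. If $\alpha=0$ and $\alpha_\eps>0$ eventually, the bound immediately gives $\liminf M_\eps\ge \nu+\nu(\xi{-}1)^+=M_0(0,\nu,\xi)$; if $\alpha_\eps=0$ along a subsequence, $M_\eps$-finiteness forces $\nu_\eps=0$, hence $\nu=0$, and both sides vanish. If $\alpha>0$ and $\xi\le 1$, then $M_\eps\ge \nu_\eps\to \nu=M_0$. If $\alpha>0$ and $\xi>1$, the term $\tfrac{\alpha_\eps}{2\eps}((\xi_\eps{-}1)^+)^2$ diverges, reproducing $M_0=+\infty$.

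The $\Gamma$-limsup is constructive. When $\alpha>0$, the constant recovery sequence works: if $\xi\le 1$, $M_\eps(\alpha,\nu,\xi)=\nu+O(\eps)\to\nu=M_0$; if $\xi>1$, both sides are $+\infty$. When $\alpha=0$ and $\xi\le 1$, I take $\alpha_\eps=\sqrt\eps$, $\nu_\eps=\nu$, $\xi_\eps=\xi$, so both perturbative terms vanish and $M_\eps\to\nu=M_0$. When $\alpha=0$, $\nu>0$, and $\xi>1$, I take the explicit minimizer $\alpha_\eps=\eps\nu/(\xi{-}1)$ computed from \eqref{eq3.Meps}, which drives AM--GM to equality and gives $M_\eps=\nu+\nu(\xi{-}1)=\nu\xi=M_0(0,\nu,\xi)$. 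The case $\nu=0$, $\alpha=0$ is handled by $\alpha_\eps=\nu_\eps=0$.

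For part (B), the plan is to combine the pointwise $\Gamma$-liminf of (A) with convexity in $(\alpha,\nu)$ and monotonicity in $\xi$. The crucial structural observation is that $M_\eps(\cdot,\cdot,\xi)$ is jointly convex and lower semicontinuous on $[0,\infty)^2$: the first summand is the perspective of $\nu\mapsto\tfrac{\eps}{2}\nu^2$, and the second is linear in $\alpha$; the extension $M_\eps(0,\nu,\xi)=+\infty$ for $\nu>0$ is the natural l.s.c.\ relaxation. Also, $M_\eps$ and $M_0$ are non-decreasing in $\xi$, and $M_0$ is itself jointly l.s.c.\ and convex in $(\alpha,\nu)$. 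To use $\liminf\xi_\eps\ge\hat\xi$ despite not having pointwise convergence of $\xi_\eps$, I replace $\xi_{\eps_n}$ by $\tilde\xi_n:=\inf_{m\ge n}\xi_{\eps_m}$: by monotonicity in $\xi$ we have $M_{\eps_n}(\alpha_{\eps_n},\nu_{\eps_n},\xi_{\eps_n})\ge M_{\eps_n}(\alpha_{\eps_n},\nu_{\eps_n},\tilde\xi_n)$, and $\tilde\xi_n\nearrow \liminf_m\xi_{\eps_m}\ge\hat\xi$ pointwise a.e. Egorov then gives uniform (hence in-measure) convergence on subsets of arbitrarily large measure, and Ioffe's lower semicontinuity theorem for normal convex integrands applied to the $\Gamma$-limit $M_0$ delivers the desired integral inequality; weak $L^1$-convergence of $(\alpha_\eps,\nu_\eps)$ plays the role of the convex variable and the in-measure convergence of $\tilde\xi_n$ plays the role of the ``parameter'' variable.

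I expect the main obstacle to lie in the degenerate regime where $\hat\alpha(s)>0$ and $\hat\xi(s)>1$ on a set of positive measure, so that $\int M_0(\hat\alpha,\hat\nu,\hat\xi)\,\rmd s=+\infty$ must be forced from below without pointwise control of $\alpha_\eps$. Here AM--GM alone only yields $M_{\mathrm{inf}}$, not the blow-up. The remedy is a Young-measure argument: passing to a subsequential Young measure $\mu_s$ generated by $(\alpha_\eps,\nu_\eps,\xi_\eps)$, whose $(\alpha,\nu)$-barycentre is $(\hat\alpha(s),\hat\nu(s))$, one uses the $\Gamma$-liminf from (A) together with Jensen's inequality for the convex integrand $M_0(\cdot,\cdot,\xi)$ to transfer pointwise divergence of $M_0$ into divergence of $\liminf\int M_\eps$; the monotonicity substitution $\xi_{\eps_n}\mapsto \tilde\xi_n$ makes the $\xi$-marginal of $\mu_s$ compatible with the pointwise hypothesis on $\hat\xi$.
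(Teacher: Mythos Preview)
Your treatment of Part (A) is correct and essentially parallels the paper's: both rely on the AM--GM lower bound $M_\eps\ge M_{\mathrm{inf}}$ for the $\Gamma$-liminf at $\alpha=0$ and on pointwise convergence for $\alpha>0$, and both use the explicit minimiser $\alpha_*^\eps=\eps\nu/(\xi{-}1)^+$ for the recovery sequence at $\alpha=0$. Your extra subcase $\alpha=0,\ \xi\le1$ with $\alpha_\eps=\sqrt\eps$ is in fact a welcome clarification, since $\alpha_*^\eps$ is undefined there.

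For Part (B), however, your main plan has a genuine gap. You propose to apply Ioffe's theorem \emph{to the integrand $M_0$}, with $(\alpha_\eps,\nu_\eps)$ as the weakly convergent variable and $\tilde\xi_n$ as the in-measure convergent parameter. But the integrals on the left-hand side involve $M_{\eps_n}$, not $M_0$, and there is no pointwise inequality bridging them: the only universal lower bound is $M_{\eps_n}\ge M_{\mathrm{inf}}$, and $M_{\mathrm{inf}}$ does not depend on $\alpha$, so it cannot detect the blow-up $M_0(\hat\alpha,\hat\nu,\hat\xi)=+\infty$ on the set $\{\hat\alpha>0,\ \hat\xi>1\}$. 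Conversely $M_{\eps_n}\ge M_0$ is false (take $\alpha_n>0$, $\xi_n>1$: the left side is finite, the right side is $+\infty$). So your Ioffe step, as written, does not start from the right functional, and the Young-measure ``remedy'' you sketch for the degenerate set is actually needed for the whole argument, not just that subcase; moreover, a Young-measure lower bound for a \emph{sequence} of integrands $M_{\eps_n}$ is itself a nontrivial statement, requiring at minimum the joint l.s.c.\ of $(\alpha,\nu,\xi,\eps)\mapsto M_\eps(\alpha,\nu,\xi)$ that you never invoke.

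The paper resolves this with a single clean device: it absorbs $\eps$ as an extra parameter by setting $\bar M(\alpha,\nu;\xi,\eps):=M_\eps(\alpha,\nu,\xi)$. Part (A) is exactly the statement that $\bar M$ is l.s.c.\ at $\eps=0$, hence $\bar M$ is l.s.c.\ on all of $[0,\infty)^4$; convexity in $(\alpha,\nu)$ holds for each fixed $(\xi,\eps)$ by your perspective argument. One then applies Ioffe once, to the \emph{fixed} integrand $\bar M$, with $(\alpha_{\eps_n},\nu_{\eps_n})\rightharpoonup(\hat\alpha,\hat\nu)$ weakly and $(\xi_{\eps_n},\eps_n)\to(\hat\xi,0)$ strongly. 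The $\liminf$ hypothesis on $\xi_\eps$ is handled, as in your plan, by monotonicity in $\xi$: the paper replaces $\xi_{\eps_n}$ by $\min(\xi_{\eps_n},\hat\xi,\kappa)$, obtains $L^1$ convergence by dominated convergence, applies Ioffe, and lets $\kappa\uparrow\infty$. Your $\tilde\xi_n=\inf_{m\ge n}\xi_{\eps_m}$ trick would serve the same purpose once the $\eps$-embedding is in place.
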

\begin{proof} Estimate \eqref{gliminf} is trivial for $\alpha>0$, as we have
  pointwise convergence then. If $\alpha=0$, we employ \eqref{eq3.Meps} and
  use that $M_\mathrm{inf}$ is continuous.

To obtain \eqref{glimsup} in the case $\alpha>0$ we simply take
$(\alpha_\eps,\nu_\eps,\xi_\eps) =(\alpha,\nu,\xi)$ and the result
follows from pointwise convergence. If $\alpha=0$, we let
$(\alpha_\eps,\nu_\eps,\xi_\eps)= (\alpha^\eps_*,\nu,\xi)$ and the
desired result follows. Thus, (A) is proved.

To show the estimate in Part (B),
 let us introduce the function $\bar M:[0,\infty)^4\to [0,\infty]$
\begin{displaymath}
  \bar M(\alpha,\nu;\xi,\eps):=M_\eps(\alpha,\nu,\xi);
\end{displaymath}
by the previous point (A) and the fact that $\bar M$ is lower
semicontinuous when $\eps>0$, it is immediate to check that $\bar M$
is lower semicontinuous in $[0,\infty)^4$. Moreover, $\bar
M(\cdot,\cdot;\xi,\eps)$ is convex in $[0,\infty)^2$ for all
$\xi,\eps$: this property can be directly checked starting from the
definition of $\bar M$ or by observing that \RRSS $
M_\eps(\cdot,\cdot,\xi)$ \RREE is convex when $\eps>0$ thanks to
\eqref{e3.11+1} and the convexity of the  \RRSS map  $(\nu,\alpha)
 \mapsto \nu^2/\alpha$. \RREE

Assuming initially that $\xi_\eps\to\wh \xi$ in $L^1(s_1,s_2)$ and
considering an arbitrary infinitesimal subsequence $\eps_n\to 0$, we
can then apply Ioffe's Theorem (see  \cite{Ioff77LSIF}) to the
sequence of maps $s\mapsto
(\alpha_{\eps_n}(s),\nu_{\eps_n}(s),\xi_{\eps_n}(s),\eps_n)$,
obtaining
\begin{displaymath}
  \int_{s_0}^{s_1} \bar M(\wh\alpha(s),\wh\nu(s),\wh\xi(s),0) \dd s
  \leq \liminf_{n\to \infty}
  \int_{s_0}^{s_1} \bar
  M(\alpha_{\eps_n}(s),\nu_{\eps_n}(s),\xi_{\eps_n}(s),\eps_n)
  \dd s\,.
\end{displaymath}
\RREE In the general case, we consider an arbitrary $\kappa>0$ and
we replace $\xi_{\eps_n}$ with the sequence
$\xi_{\kappa,n}(s):=\min(\xi_{\eps_n}(s),\wh\xi(s),\kappa)$,
converging to $\wh\xi_k(s):= \min(\wh\xi(s),\kappa)$ in
$L^1(s_1,s_2)$. Since $\bar M$ is nondecreasing with respect to
$\xi$, we argue as above and obtain
\begin{align*}
  \int_{s_0}^{s_1} \bar M(\wh\alpha(s),\wh\nu(s),\wh\xi_\kappa(s),0) \dd s
  &\leq \liminf_{n\to \infty}
  \int_{s_0}^{s_1} \bar M(\alpha_{\eps_n}(s),\nu_{\eps_n}(s),\xi_{\kappa,n}(s),\eps_n)
  \dd s
  \\&\leq \liminf_{n\to \infty}
  \int_{s_0}^{s_1} \bar M(\alpha_{\eps_n}(s),\nu_{\eps_n}(s),\xi_{\eps_n}(s),\eps_n)
  \dd s\,.
\end{align*}
Passing to the limit as $\kappa\to\infty$ and applying Fatou's Lemma
we obtain the desired inequality.
\end{proof}

\RREE In fact, the specific form of $M_0$ is not needed in the
sequel. Hence, we will consider general functions
$M:[0,\infty)^3\to[0,\infty]$ with the  following properties (which
are obviously satisfied by $M_0$):
\begin{subequations}
\label{m-prop}
\begin{align}
\label{Mprop1}
& M: [0,\infty)^3 \to [0,\infty]  \text{ is l.s.c.,}
\\
\label{Mprop2}
& M(\gamma \alpha, \gamma \nu,\xi) = \gamma  M(\alpha, \nu,\xi)
\quad \text{ for all } \alpha, \nu,\xi,\gamma  \in
[0,\infty)\,,
\\
\label{Mprop3}
& M(\alpha, \nu,\xi) \geq \nu \xi \quad \text{ for all } \alpha, \nu,\xi \in
[0,\infty)\,,
\\
\label{Mprop4}
& M (\alpha, \nu,\xi\big) = \nu \xi \quad \Longleftrightarrow\quad
(\alpha,\nu,\xi)\in \Xi \,,
\end{align}
\end{subequations}
where the set
$\Xi:=\Xi^{\text{stick}}\cup \Xi^{\text{slip}} \cup
\Xi^{\text{jump}}$ consists of the disjoint flat pieces (see Figure \ref{fig.Xi})
\begin{equation}
\label{def_Xi} \ba{l}
\Xi^{\text{stick}}:=\set{(\alpha,0,\xi)\in[0,\infty)^3}{\alpha\geq 0, \xi\in[0,1)},\\
\Xi^{\text{slip}}:=\set{(\alpha,\nu,1)\in[0,\infty)^3}{\alpha>0,
  \nu\geq 0},  \ \ \text{and}\\
\Xi^{\text{jump}}:=\set{(0,\nu,\xi)\in[0,\infty)^3}{\nu\geq 0,
\xi\geq 1}. \ea
\end{equation}
For instance, the
function
\begin{equation}
  \label{eq:6}
  \wt{M}(\alpha,\nu,\xi)=\nu+(\xi{-}1)^+(\alpha{+}\nu)=
  \max(\xi,1)\nu+ (\xi{-}1)^+\alpha
\end{equation}
fits in this framework. It is not difficult to check that, if $M$ is
nondecresasing with respect to $\xi$, then $M\le M_0$.
\begin{figure}
\psfrag{111}{$1$} \psfrag{XXX}{$\xi$} \psfrag{AAA}{$\alpha$}
\psfrag{NNN}{$\nu$} \psfrag{Sstick}{$\Xi^{\text{stick}}$}
\psfrag{Sjump}{$\Xi^{\text{jump}}$} \psfrag{Sslip}{$\Xi^{\text{slip}}$}
\centerline{\includegraphics[width=0.5\textwidth]{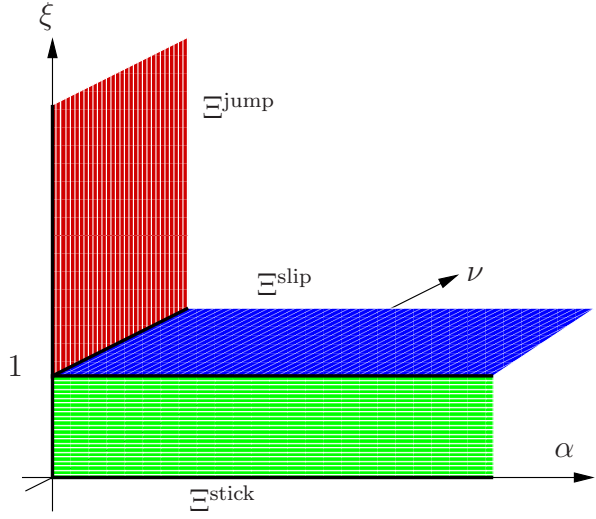}}
\caption{The set $\Xi=\Xi^{\text{stick}}\cup \Xi^{\text{slip}} \cup
\Xi^{\text{jump}}$}
\label{fig.Xi}
\end{figure}

\COMMENT{Relying on \eqref{glimit}, in Theorem~\ref{3.3} we
are going to show that, if $\{(\wh{t}_\eps,\wh{q}_\eps)\} \subset
\AC([s_0,s_1];\extQ)$ is a sequence
fulfilling \eqref{integral-form} for every $\eps>0$, up to a
subsequence $\{(\wh{t}_\eps,\wh{q}_\eps)\}$ converges to a curve
$(\wh{t},\wh{q})$ satisfying
\begin{align*}
\int_{\sigma_1}^{\sigma_2} & M_0\big(\wh{t}'(s),|\wh{q}'|(s),
\Slope\calE{\wh{t}(s)}{\wh{q}(s)}\big)\, \rmd s  +
\calE (\wh{t}(\sigma_2),\wh{q}(\sigma_2))
\\ & =
\calE (\wh{t}(\sigma_1),\wh{q}(\sigma_1))   +
\int_{\sigma_1}^{\sigma_2}
\pl_t\calE (\wh{t}(s),\wh{q}(s))\,\wh{t}'(s)\, \rmd s \quad
\forall\,s_0 \leq \sigma_1 \leq \sigma_2 \leq s_1\,.
\end{align*}
This integral identity gives rise to}

\paragraph{The notion of parametrized metric solutions.}

The following notion of \emph{parametrized metric solution of the
  rate-independent system $(\calQ,d,\calE)$} is proposed in a general form,
replacing the function $M_0$ obtained in the vanishing viscosity
limit with a generic function $M$ satisfying \eqref{m-prop}.
The proposed notion is fitted to the metric \RRSS framework \RREE
and does not need a differentiable structure. However, it strongly
relies on the fact that the small viscous friction $\eps |q'|(t)^2$
is given in terms of the same metric velocity as the
rate-independent friction, see also the assumption $\calR_2=
\frac12\calR_1^2$. We refer to \RRSS
\cite{EfeMie06RILS,MieZel08?VVLP} \RREE for settings avoiding this
assumption.  \RCOMMENT{I have removed the reference
\cite{MiRoSa08?VVLM} because whenever we refer to the metric paper
throughout this paper, it seems that there we will just work with
one (quasi)-distance.. since we still don't know what we are going
to do in the metric case, maybe at this point we might refer to the
paper in the Banach space case, the one with two norms.. or keep the
reference as it is..}


\begin{definition}[Parametrized metric solution]
\label{def3.2}
Let 
$(\calQ,d,\calE)$ satisfy
\eqref{assq}, \eqref{asserre}, and \eqref{assene} and
let $M$ fulfill \eqref{m-prop}. An absolutely continuous curve
$(\wh{t},\wh{q}):(s_0,s_1)\to \extQ$ is called a
\emph{parametrized metric solution} of  $(\calQ,d,\calE )$, if
\begin{subequations}\label{e.DefPaMeSln}
\begin{align}
\label{e.DefPaMeSln_a}
&\wh{t}:(s_0,s_1)\to[0,T]\quad\text{is nondecreasing},\\
\label{e.DefPaMeSln_b}
&\wh{t}'(s)+|\wh{q}'|(s)>0\quad\foraa s\in(s_0,s_1),
\\
\label{e.DefPaMeSln_c}
&\begin{aligned} &\frac{\rmd}{\rmd s}\calE (\wh{t}(s),\wh{q}(s))-
               \pl_t\calE (\wh{t}(s),\wh{q}(s))\,\wh{t}'(s)\\
      & \leq - M \big(\wh{t}'(s),|\wh{q}'|(s),
  \Slope\calE {\wh{t}(s)}{\wh{q}(s)}\big)\ \text{for a.a.\
  }s\in(s_0,s_1)\,.
  \end{aligned}
\end{align}
\end{subequations}
 If $(\wh t,\wh q)$ satisfies only \eqref{e.DefPaMeSln_a} and
\eqref{e.DefPaMeSln_c}, it is called a \emph{degenerate parametrized
metric
  solution}. If $\wh{t}:(s_0,s_1)\to[0,T]$ is also surjective, i.e.\
$\wh{t}(s_0)=0$ and $\wh{t}(s_1)=T$, then $(\wh t,\wh q)$ is called a
\emph{surjective parametrized metric solution}.
\end{definition}
 This solution concept has the concatenation property as well
as the restriction property. The former means that if $(\wh t,\wh
q):(s_0,s_1)\to \extQ$ and $(\wt t,\wt q):(s_1,s_2)\to \extQ$ are
parametrized metric solutions with $(\wh t(s_1^-),\wh q(s_1^-))
=(t_1,q_1)=(\wt t(s_1^+),\wt q(s_1^+))$, then \RRSS their \RREE
concatenation $(t,q):(s_0,s_2)\to \extQ$ is a solution as well.
 We point out that, thanks to \eqref{Mprop2}, the notion of
parametrized metric solution is rate-independent, i.e., invariant
under time reparametrizations by  absolutely continuous
functions  \RRSS with strictly positive derivative a.e. \RREE (by
nondecreasing absolutely continuous functions in the case of
degenerate solutions).  Moreover, the notion is independent of
the particular choice of $M$, as long as $M$ satisfies
\eqref{m-prop}.

\begin{remark}[Nondegeneracy and arclength reparametrization]
  \label{rem:nonde} \OLD{Alex has shortened a little!}
  \upshape
  Any  degenerate
  parametrized metric solution \OLD{satisfying \eqref{e.DefPaMeSln_a}
  and \eqref{e.DefPaMeSln_c}} admits a nondegenerate reparametrization
  $(\tilde t,\tilde q):[0,\tilde S]\to \extQ$,
  thus satisfying also \eqref{e.DefPaMeSln_b}. This means that
  $\wh t(s)=\tilde t(\sigma(s)), \wh q(s)=\wh q(\sigma(s))$ for some
  absolutely continuous, nondecreasing and surjetive map
  $\sigma:[s_0,s_1]\to [0,\tilde S]$. In particular, we can
  choose $\sigma$ so that $\tilde t'+|\tilde q'|=1$
  a.e.\ in $(0,\tilde S)$ by defining
  \begin{equation*}
    \label{eq:10}
    \sigma(s):=\int_{s_0}^s \big(\wh t'(s)+|\wh q'|(s)\big)\dd s=
    \wh t(s)-\wh t(s_0)+ \int_{s_0}^s |\wh  q'|(s)\dd s,
    \quad
    \tilde S:=\sigma(s_1)
  \end{equation*}
 (cf.\ also  Lemma \ref{le:chain}). In fact,
  for every interval $[r_0,r_1]\subset [s_0,s_1]$ we then have
  \begin{displaymath}
    \RRSS \sigma(r_0)=\sigma(r_1) \RREE \quad
    \Leftrightarrow\quad
    \wh t(r_0)=\wh t(r)=\wh t(r_1),\quad
    \wh q(r_0)=\wh q(r)=\wh q(r_1)\text{\ for all }r\in
    [r_0,r_1].
  \end{displaymath}
  We can then define $(\tilde t(\sigma),\tilde q(\sigma)) :=(\wh t(s),
   \wh q(s))$, whenever $\sigma=\sigma(s)$. For $\sigma_0=\sigma(r_0)<
  \sigma_1=\sigma(r_1)$ we obtain
  \begin{displaymath}
    \tilde t(\sigma_1)-\tilde t(\sigma_0)+d(\tilde q(\sigma_1),
    \tilde q(\sigma_0))\le \int_{r_0}^{r_1}
    \big(\wh t'(s)+|\wh q'|(s)\big)\dd
    s=
    \sigma_1-\sigma_0\,,
  \end{displaymath}
  giving $\tilde t'+|\tilde q'|\le 1$ a.e.\ in $[0,\tilde S]$.
  The nondegeneracy condition  holds with
  $\tilde t'+|\tilde q'|=1$ a.e.\ in $\tilde S$,  which follows via
  the change of variable formula:
  \begin{align*}
    \tilde S&\ge
    \int_0^{\tilde S} \big(\tilde t'+|\tilde q'|\big)\dd\sigma=
    \RRSS  \int_{s_0}^{s_1} \RREE \big(\tilde t'(\sigma(s))+
    |\tilde q'|(\sigma(s))\big)\sigma'(s)\dd s\\&=
    \int_{s_0}^{s_1} \big(\wh t'(s)+|\wh q'|(s)\big)\dd s=
    \sigma(s_1)=\tilde S.
  \end{align*}
\end{remark}

\GCOMMENT{The following remark could be dropped. ALEX agrees to dropping!}

\OLD{\begin{remark}[Different distances in the extended state space]
  \label{rem:dist-ph-sp}
  \upshape The choice $\wh t'+|\wh q'|=1$ a.e.\ in $[s_0,s_1]$ correspond to
  the canonical arclength parametrization of the curve $(\wh t,\wh q)$ in the
  extended state space
  \begin{equation}
    \label{eq:ex-ph-sp}
    \extQ:=[0,T]\ti\calQ \quad \text{with the distance}
    \quad
    d_\extQ((t_0,q_0),(t_1,q_1)):=|t_0{-}t_1|+d(q_0,q_1).
  \end{equation}
  More generally, one can choose any convex and positively $1$-homogeneous
  function (the restriction of a norm in $\R^2$) $g:[0,\infty)\times
  [0,\infty)\to [0,\infty)$ such that
  \begin{equation}
    \label{eq:RIF:11}
    g(0,\nu)=\nu,\quad
    g(\alpha,0)=\alpha,\quad
    \partial_\alpha g(\alpha,\nu)>0
    \quad\forall\, (\alpha,\nu)\in [0,\infty)\times [0,\infty),
  \end{equation}
  inducing the distance in $\R\times X$
  \begin{equation}
    \label{eq:RIF:10}
    d_g((t_0,q_0),(t_1,q_1)):=
    g(|t_1-t_0|,d(q_1,q_0)).
  \end{equation}
  An arclength reparametrization $(\tilde t,\tilde q):[0,\tilde S] \to \extQ$
  with respect to $d_g$ corresponds to the condition $g(|\tilde t'|,|\tilde
  q'|)=1$ a.e.\ in $[0,\tilde S]$.
\end{remark}}

Parametrized metric solutions admit various different but equivalent metric
characterizations (where we avoid to explicitly use the differential $\rmD_q$
of the energy).

\begin{proposition}
  \label{prop:char}
  Under the same assumptions of the previous Definition \ref{def3.2}, an
  absolutely continuous curve $(\wh{t},\wh{q}):(s_0,s_1)\to\extQ$ satisfying
  \eqref{e.DefPaMeSln_a}  and \eqref{e.DefPaMeSln_b}  is a
  parametrized metric solution of $(\calQ,d,\calE )$ if and only if it
  satisfies one of the following conditions (equivalent to
  \eqref{e.DefPaMeSln_c}):
  \begin{enumerate}[\bf A)]
  \item For all $s_0\le \sigma_0<\sigma_1\le s_1$ we have
    \begin{equation}
  \label{e.DefPaMeSln_d}
  \begin{aligned}
    \calE (\wh{t}(\sigma_1),\wh{q}(\sigma_1))&-
    \calE (\wh{t}(\sigma_0),\wh{q}(\sigma_0))-
    \int_{\sigma_0}^{\sigma_1} \pl_t\calE (\wh{t}(s),\wh{q}(s))\,\wh{t}'(s)
    \,\dd s
    \\&
    \le -\int_{\sigma_0}^{\sigma_1} M \big(\wh{t}'(s),|\wh{q}'|(s),
    \Slope\calE {\wh{t}(s)}{\wh{q}(s)}\big)\,\dd s.
   \end{aligned}
 \end{equation}
 \item  Eqn.\ \eqref{e.DefPaMeSln_d}
   holds just for $\sigma_0=s_0$ and $\sigma_1=s_1$, i.e.
   \begin{equation}
  \label{e.DefPaMeSln_e}
  \begin{aligned}
    \calE (\wh{t}(s_1),\wh{q}(s_1))&-
    \calE (\wh{t}(s_0),\wh{q}(s_0))-
    \int_{s_0}^{s_1} \pl_t\calE (\wh{t}(s),\wh{q}(s))\,\wh{t}'(s)
    \,\dd s
    \\&
    \le -\int_{s_0}^{s_1} M \big(\wh{t}'(s),|\wh{q}'|(s),
    \Slope\calE {\wh{t}(s)}{\wh{q}(s)}\big)\,\dd s.
   \end{aligned}
 \end{equation}
  \item
    For $ a.a.\ s\in(s_0,s_1)$ we have
    \begin{equation}
      \label{e:conclu1}
      \frac{\rmd}{\rmd s}\calE (\wh{t}(s),\wh{q}(s))-
      \pl_t\calE (\wh{t}(s),\wh{q}(s))\,\wh{t}'(s)
      =
       -  |\wh{q}'|(s)\,
      \Slope\calE{\wh{t}(s)}{\wh{q}(s)}\,,
    \end{equation}
    and one of the following (equivalent) properties
    \begin{subequations}
      \begin{gather}
        \label{eq:equivalent1}
        M \big(\wh{t}'(s),|\wh{q}'|(s), \Slope\calE
        {\wh{t}(s)}{\wh{q}(s)}\big) = |\wh{q}'|(s)\,
        \Slope\calE{\wh{t}(s)}{\wh{q}(s)}\,,\\[0.2em]
        \label{eq:equivalent2}
        \big(\wh{t}'(s),|\wh{q}'|(s),
        \Slope\calE{\wh{t}(s)}{\wh{q}(s)}\big) \,\in \, \Xi \,,
        \\[0.2em]
        \label{eq:equivalent3}
        \left\{ \ba{rcl}
          \wh{t}'(s)>0&\Longrightarrow&
          \Slope\calE{\wh{t}(s)}{\wh{q}(s)}\leq 1,\\
          |\wh{q}'|(s)>0&\Longrightarrow&
          \Slope\calE{\wh{t}(s)}{\wh{q}(s)}\geq 1. \ea \right.
      \end{gather}
    \end{subequations}
  \end{enumerate}
  In particular, by \eqref{e:conclu1} and \eqref{eq:equivalent1},
   the following \emph{identity} holds a.e.\ in $(s_0,s_1)$:
    \begin{equation}
      \label{e.DissipationId}
        \frac{\rmd}{\rmd s}\calE (\wh{t}(s),\wh{q}(s))-
        \pl_t\calE (\wh{t}(s),\wh{q}(s))\,\wh{t}'(s)
        =- M \big(\wh{t}'(s),|\wh{q}'|(s),
        \Slope\calE {\wh{t}(s)}{\wh{q}(s)}\big).
    \end{equation}
\end{proposition}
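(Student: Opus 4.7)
The plan is to establish the chain of implications \eqref{e.DefPaMeSln_c} $\Rightarrow$ A $\Rightarrow$ B $\Rightarrow$ C $\Rightarrow$ \eqref{e.DefPaMeSln_c} and then verify the equivalences \eqref{eq:equivalent1} $\Leftrightarrow$ \eqref{eq:equivalent2} $\Leftrightarrow$ \eqref{eq:equivalent3} inside C. The two easy steps are the first and third: since the chain rule \eqref{classical-ch-rule} ensures that $s\mapsto \calE(\wh t(s),\wh q(s))$ is absolutely continuous, \eqref{e.DefPaMeSln_c} integrated on any subinterval $[\sigma_0,\sigma_1]\subseteq[s_0,s_1]$ yields \eqref{e.DefPaMeSln_d}, hence A; and A $\Rightarrow$ B is immediate by choosing $\sigma_0=s_0$, $\sigma_1=s_1$.

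The main (and only substantial) step will be B $\Rightarrow$ C. Here I would combine B with the universal chain-rule inequality \eqref{e3.6} and the pointwise bound \eqref{Mprop3}, $M(\alpha,\nu,\xi)\ge \nu\xi$. Integrating \eqref{e3.6} between $s_0$ and $s_1$ gives
\begin{equation*}
-\int_{s_0}^{s_1}|\wh q'|(s)\,\Slope\calE{\wh t(s)}{\wh q(s)}\dd s
\;\le\;
\calE(\wh t(s_1),\wh q(s_1))-\calE(\wh t(s_0),\wh q(s_0))
-\int_{s_0}^{s_1}\pl_t\calE(\wh t(s),\wh q(s))\,\wh t'(s)\dd s,
\end{equation*}
while B and \eqref{Mprop3} bound the middle term from above by
$-\int_{s_0}^{s_1}M\bigl(\wh t'(s),|\wh q'|(s),\Slope\calE{\wh t(s)}{\wh q(s)}\bigr)\dd s \le -\int_{s_0}^{s_1}|\wh q'|(s)\,\Slope\calE{\wh t(s)}{\wh q(s)}\dd s$. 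Thus all three quantities coincide, and both integrated inequalities must be equalities. The pointwise chain-rule bound \eqref{useful-later} and the pointwise bound \eqref{Mprop3} force, for a.e.\ $s\in(s_0,s_1)$, both
\begin{equation*}
\langle\rmD_q\calE(\wh t(s),\wh q(s)),\wh q'(s)\rangle = -|\wh q'|(s)\,\Slope\calE{\wh t(s)}{\wh q(s)}
\quad\text{and}\quad
M\bigl(\wh t'(s),|\wh q'|(s),\Slope\calE{\wh t(s)}{\wh q(s)}\bigr)=|\wh q'|(s)\,\Slope\calE{\wh t(s)}{\wh q(s)}.
\end{equation*}
Via \eqref{classical-ch-rule} the first identity is exactly \eqref{e:conclu1}; the second is \eqref{eq:equivalent1}. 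The delicate point is simply that we may pass from integrated equality to pointwise equality a.e., which is standard since the integrands differ by a nonnegative function. Property \eqref{Mprop4} then gives \eqref{eq:equivalent2}, and by inspecting the three flat pieces defining $\Xi$ in \eqref{def_Xi} we obtain \eqref{eq:equivalent3}: indeed $\wh t'>0$ excludes $\Xi^{\text{jump}}$, so $\xi\le 1$; $|\wh q'|>0$ excludes $\Xi^{\text{stick}}$, so $\xi\ge 1$. Conversely, \eqref{eq:equivalent3} together with the nondegeneracy \eqref{e.DefPaMeSln_b} forces $(\wh t',|\wh q'|,|\pl_q\calE|)\in\Xi$.

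Finally, for C $\Rightarrow$ \eqref{e.DefPaMeSln_c}: adding \eqref{e:conclu1} and \eqref{eq:equivalent1} yields the pointwise dissipation identity \eqref{e.DissipationId}, which gives exactly the inequality \eqref{e.DefPaMeSln_c} as an equality. This closes the cycle and, by the way, shows \eqref{e.DissipationId} as a bonus. I expect no real obstacle beyond the careful bookkeeping in B $\Rightarrow$ C; one just needs to verify that all functions appearing under the integrals are Lebesgue-measurable and nonnegative, which is guaranteed by absolute continuity of $(\wh t,\wh q)$, lower semicontinuity of $M$ from \eqref{Mprop1}, and the standing smoothness assumption \eqref{assene} on $\calE$.
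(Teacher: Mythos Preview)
Your proof is correct and follows essentially the same route as the paper: both arguments reduce everything to the single implication B $\Rightarrow$ C, established by sandwiching the integrated energy difference between the chain-rule lower bound \eqref{e3.6} and the upper bound coming from B together with \eqref{Mprop3}, then passing to pointwise equality and invoking \eqref{Mprop4} and the explicit description of $\Xi$. One minor remark: the nondegeneracy condition \eqref{e.DefPaMeSln_b} is not actually needed for the implication \eqref{eq:equivalent3} $\Rightarrow$ \eqref{eq:equivalent2}, since the characterization $(\alpha,\nu,\xi)\in\Xi \Leftrightarrow \big((\alpha>0\Rightarrow\xi\le1)\text{ and }(\nu>0\Rightarrow\xi\ge1)\big)$ holds unconditionally.
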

\begin{proof}
  \textbf{A)} is just the integral formulation of \eqref{e.DefPaMeSln_c}.

  We note that the chain rule  \RRSS inequality \eqref{e3.6},  combined
  with \eqref{e.DefPaMeSln_c}  and \eqref{Mprop3}, \RREE implies \eqref{e:conclu1} and
  \eqref{eq:equivalent1}.  By condition \eqref{Mprop4}, \eqref{eq:equivalent1}
  is equivalent to \eqref{eq:equivalent2}.

  Since the set $\Xi$ can be easily characterized via
\[
(\alpha,\nu,\xi)\in
\Xi\quad\Longleftrightarrow\quad\Big(\,(\alpha>0\Rightarrow\xi\leq 1)\text{
  and }(\nu>0\Rightarrow\xi\geq 1)\, \Big) \,,
\]
we ultimately find that \eqref{eq:equivalent2} can be replaced by the simple
relations \eqref{eq:equivalent3}.

Having obtained the equivalence between \eqref{e.DefPaMeSln_c} and
\textbf{C)}, we can now show that \textbf{B)} is sufficient to
characterize parametrized metric solutions (the necessity is
trivial): \RRSS again \RREE applying the chain rule \RRSS
\eqref{classical-ch-rule}-\eqref{e3.6}, \RREE we get
\begin{displaymath}
  \int_{s_0}^{s_1} \Big( M \big(\wh{t}'(s),|\wh{q}'|(s),
    \Slope\calE {\wh{t}(s)}{\wh{q}(s)}\big)-
    \langle -\rmD_q\calE(\wh t(s),{\wh q(s)}),\wh q'(s)\rangle\Big)\,\dd s\le0
\end{displaymath}
so that \eqref{Mprop3} and \eqref{useful-later} yield
\begin{displaymath}
  M \big(\wh{t}'(s),|\wh{q}'|(s),
  \Slope\calE {\wh{t}(s)}{\wh{q}(s)}\big)=
  \langle -\rmD_q\calE(\wh t(s),{\wh q(s)}),\wh q'(s)\rangle=
  |\wh{q}'|(s)\,
      \Slope\calE{\wh{t}(s)}{\wh{q}(s)}
\end{displaymath}
for a.a.\ $s\in (s_0,s_1)$. We thus get \eqref{e:conclu1} and
\eqref{eq:equivalent1}.
\end{proof}

\begin{remark}[Mechanical interpretation]
\label{rem:mechanical} \upshape The evolution described by
  relations \eqref{eq:equivalent3} bears the following mechanical interpretation, cf.\
  \cite{EfeMie06RILS}. \RRSS Indeed, \RREE with $(\alpha,\nu,\xi)=(\wh{t}',|\wh{q}'|,
\Slope\calE{\wh{t}}{\wh{q}})$ we can use the decomposition $
\Xi=\Xi^{\text{stick}}\cup \Xi^{\text{slip}} \cup
\Xi^{\text{jump}}$:
\begin{itemize}
\item  $(\wh{t}'>0, \  |\wh{q}'|=0)$ leads to \emph{sticking}
  ($(\alpha,\nu,\xi) \in \Xi^{\text{stick}}$),
\item $(\wh{t}'>0, \, |\wh{q}'|>0)$ leads to
  \emph{rate-independent evolution} ($(\alpha,\nu,\xi) \in \Xi^{\text{slip}}$),
\item when $(\wh{t}'=0, \, |\wh{q}'|>0)$, the system has switched to a
  \emph{viscous regime}, which is seen as a jump in the (slow) external time
  scale (the time function $t$ is frozen and $(\alpha,\nu,\xi) \in \Xi^{\text{jump}}$).
\end{itemize}
\end{remark}

\begin{remark} \label{rem:bipotentials} \upshape
Properties \eqref{Mprop3} and \eqref{Mprop4} seem to be related to the notion
of \emph{bipotential} (cf.\ e.g.\ \cite{BuDeVa08ECBG}), which was proposed
for studying non-associated constitutive laws in mechanics by convex
analysis tools. We recall that, given two (topological, locally
convex) spaces in duality $Z$ and $Z'$, a function $b: Z \times Z'
\to (-\infty,\infty]$ is called a bipotential if it is convex,
lower
semicontinuous with respect to both arguments, and fulfills for all $(\nu,\xi)
\in Z \times Z'$
\[
b(\nu,\xi) \geq \langle \xi,\nu\rangle  \ \text{ and } \ \Big(\xi\in
\partial_{\nu} b(\cdot,\xi)(\nu) \ \Leftrightarrow \ \nu\in
\partial_{\xi}
b(\nu,\cdot)(\xi) \ \Leftrightarrow \ b(\nu,\xi) =\langle
\xi,\nu\rangle \Big) \,.
\]
Indeed, for every $\alpha>0 $  the functions $M_\eps
(\alpha,\cdot,\cdot)$ given by \eqref{e3.11+1} and $M_0
(\alpha,\cdot,\cdot)$ by \eqref{ex_1_M_0} are bipotentials on
$[0,\infty) \times [0,\infty)$.
\end{remark}

The next result ensures that the
abstract metric evolution formulation developed here reduces to the
one stated in Proposition~\ref{p:2.1}\COMMENT{, at the end of the previous
section}.

\begin{proposition}\label{p3.equiv.evol}
  Let $(\calQ,d,\calE )$ satisfy \eqref{assq}, \eqref{asserre},
  and \eqref{assene}. Then, a curve $(\wh{t},\wh{q}) \in \AC ([s_0,s_1];
  \extQ)$ is a parametrized metric solution of the
  rate-independent system $(\calQ,d,\calE )$ if and only if there exists
  $\lambda:(s_0,s_1)\to  [1  ,\infty)$ such that \eqref{e2.6} holds.
\end{proposition}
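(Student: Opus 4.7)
The plan is to invoke Proposition \ref{prop:char}, characterization \textbf{C)}, which reduces being a parametrized metric solution to the combination of \eqref{e:conclu1} and \eqref{eq:equivalent3} (together with the monotonicity \eqref{e.DefPaMeSln_a} and the nondegeneracy \eqref{e.DefPaMeSln_b}). Two parts of \eqref{e2.6} already correspond trivially: $\wh{t}'\ge 0$ a.e.\ is equivalent to the monotonicity of the absolutely continuous $\wh t$, and $\wh{t}'+\calR_1(\wh{q},\wh{q}')>0$ matches \eqref{e.DefPaMeSln_b} since $|\wh{q}'|=\calR_1(\wh{q},\wh{q}')$. It remains to show that the differential inclusion $0\in \lambda\pl\calR_1(\wh{q},\wh{q}')+\rmD_q\calE(\wh t,\wh q)$ together with $\lambda\ge 1$ and $(\lambda{-}1)\wh{t}'\equiv 0$ is equivalent to \eqref{e:conclu1} plus \eqref{eq:equivalent3}. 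The bridge between both formulations is the duality triple \eqref{duality_op:1}, \eqref{duality_op:1bis}, \eqref{duality_op:3} together with the chain rule \eqref{classical-ch-rule}.

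For the implication $(\Leftarrow)$, the inclusion in \eqref{e2.6} means $-\rmD_q\calE \in \lambda\,\pl\calR_1(\wh q,\wh q')$, so \eqref{duality_op:3} yields the extremality identity $\langle -\rmD_q\calE,\wh q'\rangle=\calR_{1,*}(\wh q,-\rmD_q\calE)\,\calR_1(\wh q,\wh q')=|\wh q'|\,\Slope\calE{\wh t}{\wh q}$, which combined with \eqref{classical-ch-rule} gives \eqref{e:conclu1}. For \eqref{eq:equivalent3}, distinguish two cases: if $\wh t'(s)>0$ then $\lambda(s)=1$ by $(\lambda{-}1)\wh t'=0$, so $-\rmD_q\calE\in\pl\calR_1(\wh q,\wh q')$ and \eqref{duality_op:1bis} (or \eqref{duality_op:1}) force $\Slope\calE{\wh t}{\wh q}\le 1$; if $|\wh q'|(s)>0$, then $\wh q'\neq 0$, so \eqref{duality_op:1} forces $\calR_{1,*}(\wh q,-\rmD_q\calE)=\lambda\ge 1$, i.e.\ $\Slope\calE{\wh t}{\wh q}\ge 1$.

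For the implication $(\Rightarrow)$, combine \eqref{e:conclu1} with \eqref{classical-ch-rule} to obtain $\langle -\rmD_q\calE,\wh q'\rangle=|\wh q'|\Slope\calE{\wh t}{\wh q}=\calR_{1,*}(\wh q,-\rmD_q\calE)\,\calR_1(\wh q,\wh q')$, so by \eqref{duality_op:3} there exists $\mu(s)\ge 0$ with $-\rmD_q\calE(\wh t,\wh q)\in\mu\,\pl\calR_1(\wh q,\wh q')$. I will then set $\lambda(s):=\max\{1,\Slope\calE{\wh t(s)}{\wh q(s)}\}$, which is measurable, and check \eqref{e2.6} case by case. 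If $\wh q'(s)\neq 0$, then by \eqref{duality_op:1} every element of $\pl\calR_1(\wh q,\wh q')$ has dual norm exactly one, so necessarily $\mu=\calR_{1,*}(\wh q,-\rmD_q\calE)=\Slope\calE{\wh t}{\wh q}$, and \eqref{eq:equivalent3} gives $\mu\ge 1$, hence $\lambda=\mu$; moreover if $\wh t'>0$ then \eqref{eq:equivalent3} forces $\Slope\calE{\wh t}{\wh q}\le 1$, so $\lambda=1$ and $(\lambda{-}1)\wh t'=0$. If $\wh q'(s)=0$, the nondegeneracy \eqref{e.DefPaMeSln_b} gives $\wh t'>0$, and \eqref{eq:equivalent3} yields $\calR_{1,*}(\wh q,-\rmD_q\calE)\le 1$, so \eqref{duality_op:1bis} gives $-\rmD_q\calE\in\pl\calR_1(\wh q,0)=\pl\calR_1(\wh q,\wh q')$, and taking $\lambda=1$ satisfies the inclusion and $(\lambda{-}1)\wh t'=0$.

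The main obstacle I expect is the case $\wh q'(s)=0$, since the duality characterization \eqref{duality_op:1} degenerates there; one must separately invoke \eqref{duality_op:1bis} to recover the inclusion from the slope bound. A minor but genuine subtlety is that one should use the symmetry $\calR_{1,*}(q,-w)=\calR_{1,*}(q,w)$ coming from \eqref{asserre} (which asserts that $\calR_1(q,\cdot)$ is a norm, and not merely a Minkowski functional) to identify $\Slope\calE{\wh t}{\wh q}$ with $\calR_{1,*}(\wh q,-\rmD_q\calE)$ in the previous argument.
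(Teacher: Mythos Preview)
Your proof is correct and follows essentially the same route as the paper's: the same reduction via Proposition~\ref{prop:char}\textbf{C)}, the same choice $\lambda(s)=\max\{1,\Slope\calE{\wh t(s)}{\wh q(s)}\}$, and the same case distinction on $|\wh q'|>0$ versus $|\wh q'|=0$ using the duality relations \eqref{duality_op:1}--\eqref{duality_op:3}. Your explicit remark on the symmetry $\calR_{1,*}(q,-w)=\calR_{1,*}(q,w)$ (needed to identify $\calR_{1,*}(\wh q,-\rmD_q\calE)$ with the slope) is a useful point that the paper leaves implicit.
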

\begin{proof}
By \eqref{newform1}, condition \eqref{e.DefPaMeSln_b} in
Definition~\ref{def3.2} coincides with  the third of \eqref{e2.6}.
Now,  let us first suppose that \eqref{e.DefPaMeSln_c}
holds, and  set $\lambda(s):= \max\{\Slope\calE{\wh{t}(s)}{\wh{q}(s)},\,
1\}\,$. We shall prove that the triple $(\wh{t},\wh{q},\lambda)$
fulfills \eqref{e2.6} on $(s_0,s_1)$.
 Indeed, \eqref{e:conclu1},
 \eqref{newform2}, and \eqref{duality_op:3} yield
\begin{equation}
\label{subdif-rel} -\rmD_q\calE (\wh{t}(s),\wh{q}(s)) \in
\Slope\calE{\wh{t}(s)}{\wh{q}(s)} \,\pl \calR_1
(\wh{q}(s),\wh{q}'(s))\quad \foraa\, s\in (s_0,s_1)\,.
\end{equation}
Now, let us fix $\bar{s} \in (s_0,s_1)$ at which \eqref{subdif-rel}
holds: if $|\wh{q}'|(\bar{s}) >0$, taking into account the second of
\eqref{eq:equivalent3} we find that
$\lambda(\bar{s})=\Slope\calE{\wh{t}(\bar{s})}{\wh{q}(\bar{s})} $
and that, by \eqref{subdif-rel}, the triple $(\wh{t}',
|\wh{q}'|,\lambda) $ satisfies the first of \eqref{e2.6} at
$s=\bar{s}$. On the other hand, if $|\wh{q}'|(\bar{s}) =0$,
necessarily $\wh{t}'(\bar{s})>0 $ \RRSS  by \eqref{e.DefPaMeSln_b}
\RREE  and the first of \eqref{eq:equivalent3} gives that
$\Slope\calE{\wh{t}(\bar{s})}{\wh{q}(\bar{s})} \leq 1$. In this
case, $\lambda(\bar{s})=1$ and \eqref{subdif-rel} implies
\[
-\rmD_q\calE (\wh{t}(\bar{s}),\wh{q}(\bar{s})) \in \pl
\calR_1 (\wh{q}(\bar{s}),0)\,,
\]
hence we again conclude that $(\wh{t}'(\bar{s}),
|\wh{q}'|(\bar{s}),\lambda(\bar{s})) $ fulfills~$\eqref{e2.6}_1$.

Conversely, from the first of \eqref{e2.6} we read that for a.a.\
$s\in (s_0,s_1)$
\begin{gather}
  \label{G1}
  \Slope\calE{\wh t(s)}{\wh q(s)}\, |\wh{q}'|= \langle   -\rmD_q\calE
  (\wh{t}(s),\wh{q}(s)),\wh{q}'(s) \rangle,\\
  \label{G2}
  \Slope\calE{\wh t(s)}{\wh q(s)}\le \lambda(s),\\
  \label{G3}
  \Slope\calE{\wh t(s)}{\wh q(s)}= \lambda(s)\quad\text{if }|\wh{q}'|>0.
 \end{gather}
Then \eqref{e.DefPaMeSln_c} follows, if we check
\eqref{eq:equivalent3}. Indeed, if $\wh t'>0$, then \RRSS the second
of \RREE \eqref{e2.6} yields $\lambda=1$. Therefore the first
condition of \eqref{eq:equivalent3} follows from \eqref{G2}. If
$|\wh q'|>0$, combining \eqref{G3} and the constraint $\lambda\ge1$
of \eqref{e2.6}, we  also get  the second  of
\eqref{eq:equivalent3}.
%
 \end{proof}

\paragraph{Convergence of the viscous approximation.}
The main result of this section states that limits $(\wh{t},\wh{q})$
of parametrized solutions $(\wh{t}_\eps,\wh{q}_\eps)$ of the viscous
system \eqref{e3.10}, with $\psi=\psi_\eps$, are actually
parametrized metric solutions of the rate-independent system \RRSS
$(\calQ,d,\calE)$. \RREE   By the standard energy estimates and an
elementary rescaling, it is not restrictive to assume that the
domain of $(\wh{t}_\eps,\wh{q}_\eps)$ is a fixed interval
$[s_0,s_1]$, independent of $\eps$.

\begin{theorem}[Vanishing viscosity limit]
\label{3.3}
Let $(\calQ,d,\calE)$ satisfy  \eqref{assq}, \eqref{asserre}, and
\eqref{assene}. For every $\eps >0$  let $q_\eps\in\AC([0,T]; \calQ)$ be a
solution to \eqref{e3.9} for $\psi=\psi_\eps$.
Choose nondecreasing surjective
parametrizations $\wh{t}_\eps \in \AC([s_0,s_1];[t_{0,\eps},T]) $, and let
$\wh{q}_\eps(s)=q_\eps(\wh{t}_\eps (s))$.
Suppose that there exists $q_0 \in \calQ$,
and $m\in\rmL^1((0,S))$
such that
\begin{align}
& \label{a0}
t_{0,\eps}=\wh{t}_\eps(s_0) \to 0\,,
\quad \wh{q}_\eps (s_0)=q_\eps(t_{0,\eps})
\to q_0 \ \ \text{as $\eps \searrow 0$,}
\\
&
\label{a1}  m_\eps:=\wh{t}'_\eps+|\wh{q}'_\eps|\weakto
 m\quad\text{in } \rmL^1(s_0,s_1) \ \ \text{as $\eps \searrow 0$.}
\end{align}
Then, there exist a subsequence
$((\wh{t}_{\eps_k},\wh{q}_{\eps_k}))_{k\in \N}$ with $\eps_k\searrow
0$  and $(\wh{t},\wh{q})\in \AC([s_0,s_1]; \extQ)$ such that
$(\wh{t}(s_0),\wh{q}(s_0))= (0,q_0)$, and,  as $k\to\infty$,
\begin{gather}
\label{conv1}( \wh{t}_{\eps_k}, \wh{q}_{\eps_k})  \to (\wh{t} , \wh q)\
  \text{ in } \rmC^0([s_0,s_1];\extQ)),  \\
\label{conv2} ( \wh{t}_{\eps_k}', |\wh{q}_{\eps_k}'|)  \weakto
(\wh{t}' , |\wh q'|)\
  \text{ in } \rmL^1([s_0,s_1];\R^2),\qquad
\int_{t_{0,\eps_k}}^T |q_{\eps_k}'|(t)\dd t\to
\int_{s_0}^{s_1}|\wh q'|(s)\d s.
\end{gather}
The limit $(\wh{t},\wh{q})$  is a degenerate parametrized metric solution
of $(\calQ,d,\xi)$ (i.e.\ it satisfies \eqref{e.DefPaMeSln_a} and
\eqref{e.DefPaMeSln_c}),  and it is
nondegenerate (recall \eqref{e.DefPaMeSln_b})
if $m(s)>0$ a.e.\ in $(s_0,s_1)$.
\end{theorem}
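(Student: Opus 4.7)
The plan is to rewrite the viscous differential inequality \eqref{e3.11} as an \emph{energy equality} (via the chain rule \eqref{e3.6} combined with the duality $\psi_\eps(\nu)+\psi_\eps^*(\xi)\ge\nu\xi$), and then to pass to the limit $\eps_k\searrow 0$ using compactness together with the $\Gamma$-convergence of $M_\eps$ to $M_0$ from Lemma \ref{l3.M-Gamma}. Concretely, on every subinterval $[\sigma_0,\sigma_1]\subset[s_0,s_1]$ the reparametrized viscous solutions satisfy
\begin{equation*}
\calE(\wh t_\eps(\sigma_1),\wh q_\eps(\sigma_1))-\calE(\wh t_\eps(\sigma_0),\wh q_\eps(\sigma_0))-\int_{\sigma_0}^{\sigma_1}\!\pl_t\calE(\wh t_\eps,\wh q_\eps)\,\wh t_\eps'\,\dd s \,=\, -\int_{\sigma_0}^{\sigma_1}\!M_\eps\bigl(\wh t_\eps',|\wh q_\eps'|,\Slope\calE{\wh t_\eps(s)}{\wh q_\eps(s)}\bigr)\dd s.
\end{equation*}

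For compactness, assumption \eqref{a1} gives equi-absolute continuity of the family $\{(\wh t_\eps,\wh q_\eps)\}$ in $\extQ$ (the modulus of continuity being controlled by $\int m_\eps$), which combined with \eqref{a0} and the boundedness of the range that follows from the energy identity yields \eqref{conv1} along a subsequence $\eps_k\searrow 0$ via Arzel\`a--Ascoli. Since $m_{\eps_k}\weakto m$ in $L^1$ is equi-integrable, so are $\wh t'_{\eps_k}\in[0,m_{\eps_k}]$ and $|\wh q'_{\eps_k}|\in[0,m_{\eps_k}]$; extracting a further subsequence we obtain weak $L^1$ limits $\wh t'_{\eps_k}\weakto \alpha$ and $|\wh q'_{\eps_k}|\weakto \nu$ with $\alpha+\nu=m$. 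Testing against smooth functions identifies $\alpha=\wh t'$, and the standard lower semicontinuity of the metric derivative against uniform convergence yields $|\wh q'|\le \nu$ a.e., so $(\wh t,\wh q)\in\AC([s_0,s_1];\extQ)$ and \eqref{e.DefPaMeSln_a} passes to the limit.

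To pass to the limit in the energy identity, continuity of $\calE$ and $\pl_t\calE$ together with the weak $L^1$ convergence of $\wh t'_{\eps_k}$ handle the left-hand side. For the right-hand side we invoke Lemma \ref{l3.M-Gamma}(B), whose pointwise assumption $\liminf_k\Slope\calE{\wh t_{\eps_k}(s)}{\wh q_{\eps_k}(s)}\ge\Slope\calE{\wh t(s)}{\wh q(s)}$ in fact holds as an \emph{equality} everywhere because, under \eqref{assq}--\eqref{assene}, the representation \eqref{newform2} shows that the slope $|\pl_q\calE|(t,q)=\calR_{1,*}(q,\rmD_q\calE(t,q))$ is continuous on $\extQ$ (composition of the continuous dual Finsler norm with $\rmD_q\calE\in C^0$). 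This yields the $\le$ inequality in the integral identity on every $[\sigma_0,\sigma_1]$ with $M_\eps$ replaced by $M_0$; Lebesgue differentiation then delivers \eqref{e.DefPaMeSln_c} for the valid choice $M=M_0$, establishing that $(\wh t,\wh q)$ is a degenerate parametrized metric solution. Moreover, combining the integral inequality with the chain rule \eqref{e3.6} and \eqref{Mprop3} forces it to be an equality, hence \eqref{e:conclu1} holds, $|\wh q'|=\nu$ a.e., and we upgrade the weak convergence of $|\wh q'_{\eps_k}|$ to the convergence of integrals in \eqref{conv2}. The nondegeneracy \eqref{e.DefPaMeSln_b} is then immediate from $\wh t'+|\wh q'|=\alpha+\nu=m>0$ a.e.

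The main obstacle is the pointwise lower semicontinuity of the slope required to activate Lemma \ref{l3.M-Gamma}(B); this is essentially free here thanks to the smoothness hypothesis \eqref{assene} and the continuity of the Finsler dual norm, but represents the central technical difficulty in the fully metric extension of \cite{MiRoSa08?VVLM}, where it has to be replaced by a carefully designed chain-rule condition on a merely lower-semicontinuous slope along the limiting curve.
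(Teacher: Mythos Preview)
Your proposal is correct and follows essentially the same route as the paper: Arzel\`a--Ascoli compactness from the equi-integrability assumption \eqref{a1}, Dunford--Pettis for the weak $L^1$ limits of $\wh t_\eps'$ and $|\wh q_\eps'|$, continuity of the slope via \eqref{newform2}, and Lemma~\ref{l3.M-Gamma}(B) to pass to the limit in the dissipation integral. The only cosmetic differences are that the paper integrates once over the full interval $[s_0,s_1]$ and appeals to Proposition~\ref{prop:char}(B) rather than Lebesgue-differentiating an inequality on every subinterval, and that the paper deduces $\eta=|\wh q'|$ (your $\nu=|\wh q'|$) from the \emph{strict} monotonicity of $M_0(\alpha,\cdot,\xi)$ on its domain of finiteness, whereas you reach the same conclusion through the sandwich $M_0(\wh t',\nu,\xi)=\nu\xi=|\wh q'|\xi$ and the structure of the contact set $\Xi$; both arguments are valid.
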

\begin{proof}
Eqns.\ \eqref{e:mder} and \eqref{a1} yield
\begin{equation}
  \label{est1}
d(\wh{q}_\eps (r_0), \wh{q}_\eps (r_1)) \leq \int_{r_0}^{r_1} |\wh{q}'_\eps|(s) \,
\rmd s \leq \int_{s_0}^{r_1} m_\eps(s) \, \rmd s.
\end{equation}
In particular, choosing  $r_0=s_0$ and using  \RRSS
\eqref{a0}-\eqref{a1} \RREE  we find $C>0$ such that
\begin{equation}
  \label{esteem1} d(q_0, \wh{q}_\eps (t)) \leq C \quad\text{for all } t \in
  [s_0,s_1] \text{ and all }\eps>0\,.
\end{equation}
Moreover, it follows from \eqref{a1} that the sequences $\{ \wh{t}'_\eps\}$
and $\{ |\wh{q}'_\eps| \}$ are bounded and uniformly integrable in $\rmL^1
(s_0,s_1)$.  Hence, on the one hand, the Ascoli-Arzel\`{a} compactness theorem
and its version for metric spaces \cite[Prop.\,3.3.1]{AmGiSa05GFMS} yield that
there exists an absolutely continuous curve $(\wh{t},\wh{q}): [s_0,s_1] \to
\extQ$ such that, up to a subsequence, convergences \eqref{conv1}
hold. On the other hand, by the Dunford-Pettis criterion (see, e.g.,
\cite[Cor.\,IV.8.11]{DunSch58LOPI}), there exists
$\eta\in L^1
(s_0,s_1)$ such that, up to the extraction of a (not
relabeled) subsequence,
\begin{equation}
  \label{e:conv0}
  {\wh{t}_\eps}' \weakto \wh t',
  \qquad{|\wh{q}_\eps'|} \weakto \eta \quad \text{in } \rmL^1 (s_0,s_1) \
    \text{ as } \eps \searrow 0.
\end{equation}
Passing to the limit in \eqref{est1} we easily get
\begin{equation}
\label{inequality} |{\wh{q}}'|(s) \leq \eta(s) \leq m(s)
 \qquad \foraa\, s\in (s_0,s_1)\,.
\end{equation}

Now, the smoothness of $\calE$ and assumption \eqref{assq} yield
that $\calE$, $\partial_t \calE$, and $|\partial_q \calE|$ are
continuous with respect to both arguments, \RRSS thus \RREE  we
readily infer from \eqref{conv1} that
\begin{equation}
\label{e:conv2}
\begin{gathered}
 \calE({\wh{t}}_\eps, {\wh{q}}_\eps) \to
 \calE({\wh{t}}, {\wh{q}}), \ \  \Slope\calE{{\wh{t}}_\eps}{{\wh{q}}_\eps} \to
\Slope\calE {{\wh{t}}}{\wh{q}} \ \ \text{and} \ \ \pl_t\calE ({\wh{t}}_\eps,
{\wh{q}}_\eps) \to \pl_t\calE ({\wh{t}}, {\wh{q}})
\\
\text{uniformly in  }[s_0,s_1] \text{ for } \eps \searrow 0\,.
\end{gathered}
\end{equation}
To proceed further, we integrate \eqref{e3.11} over $[s_0,s_1]$ and
obtain
\begin{equation}
\label{integral-form}
\begin{aligned} &
\calE (\wh{t}_\eps(s_0),\wh{q}_\eps(s_0))
-  \calE (\wh{t}_\eps(s_1),\wh{q}_\eps(s_1))
+ \int_{s_0}^{s_1}
\pl_t\calE (\wh{t}_\eps(s),\wh{q}_\eps(s))\,\wh{t}'_\eps(s) \dd s\\
& \geq
\int_{s_0}^{s_1}
M_\eps\big(\wh{t}'_\eps(s),|\wh{q}'_\eps|(s),
\Slope\calE{\wh{t}_\eps(s)}{\wh{q}_\eps(s)}\big) \dd s  \,.
\end{aligned}
\end{equation}
On the left-hand side we can pass to the limit $\eps \to 0$ \RRSS
using \eqref{e:conv0}  and \eqref{e:conv2}, \RREE whereas for the
right-hand side we use Part (B) of Lemma \ref{l3.M-Gamma}:
\[
\begin{aligned}
& \calE ({\wh{t}}(s_0),{\wh{q}}(s_0)) -
\calE ({\wh{t}}(s_1),{\wh{q}}(s_1)) +
\int_{s_0}^{s_1}
\pl_t\calE ({\wh{t}}(r),{\wh{q}}(r))\,{\wh{t}}'(r)\, \rmd r
\\
& = \lim_{\eps \searrow 0} \Big(
\calE ({\wh{t}}_\eps(s_0),{\wh{q}}_\eps(s_0)) -
\calE ({\wh{t}}_\eps(s_1),{\wh{q}}_\eps(s_1)) +
\int_{s_0}^{s_1}
\pl_t\calE ({\wh{t}}_\eps(r),{\wh{q}}_\eps(r))\,{\wh{t}}_\eps'(r)\,
\rmd r \Big)
\\
& \geq  \liminf_{\eps \searrow 0} \int_{s_0}^{s_1}
M_\eps\big({\wh{t}}_\eps'(r),|{\wh{q}}_\eps'|(r),
\Slope\calE{{\wh{t}}_\eps(r)}{{\wh{q}}_\eps(r)}\big)\,
\rmd r
\\
& \geq \int_{s_0}^{s_1}
M_0\big({\wh{t}}'(r), \eta(r),
\Slope\calE{{\wh{t}}(r)}{{\wh{q}}(r)}\big)\, \rmd r
\\
& \geq  \int_{s_0}^{s_1}
M_0\big({\wh{t}}'(r),|{\wh{q}}'|(r),
\Slope\calE{{\wh{t}}(r)}{{\wh{q}}(r)}\big)\, \rmd r
\end{aligned}
\]
where we have used \eqref{inequality} and the monotonicity of
$M_0(\alpha,\cdot,\xi)$ for the last estimate.

We see that $({\wh{t}},{\wh{q}})$ fulfills \eqref{e.DefPaMeSln_e} a.e.\
in~$(s_0,s_1)$ , and it is therefore a (possibly) degenerate parametrized
metric solution of $(\calQ,d,\calE)$. Moreover, comparing the last
inequalities with the integrated form of \eqref{e.DissipationId}, we get
 \begin{displaymath}
   M_0\big({\wh{t}}'(r),\eta(r),
   \Slope\calE{{\wh{t}}(r)}{{\wh{q}}(r)}\big)=
   M_0\big({\wh{t}}'(r),|{\wh{q}}'|(r),
\Slope\calE{{\wh{t}}(r)}{{\wh{q}}(r)}\big)<\infty
\end{displaymath}
for a.a.\ $r\in (s_0,s_1)$. Since $M_0(\alpha,\cdot,\xi)$ is strictly monotone
in its domain of finiteness, we get $\eta(r)=|\wh q'|(r)$ for a.a.\ $r\in
(s_0,s_1)$, thus obtaining \eqref{conv2}.  Using the first convergence in
\eqref{e:conv0} we also find $\wh t'+|{\wh{q}}'|= m$ and the last assertion
follows.
\end{proof}

\begin{remark}[Preservation of arclength parametrizations]
  \label{rem:pres}
  \upshape
  If $(\wh t_\eps,\wh q_\eps)$ are arclength parametrization
  (i.e.\ $m_\eps\equiv 1$), then  their limit $(\wh t,\wh q)$
  still satisfies the arclength property $\wh t'+|\wh q'|=1$,
  thanks to \eqref{conv2}.  This generalizes
  \cite[Cor.\:3.6]{EfeMie06RILS}.
\end{remark}

\begin{remark}
  \label{rem:point-forward-stability} \upshape Mimicking the argument of the
  proof of Proposition~\ref{3.3}, under the same assumptions it is also
  possible to prove a result of stability with respect to initial data for
  parametrized metric solutions. Namely, let $\{ (\wh{t}_n, \wh{q}_n)\}$ be a
  sequence of parametrized metric solutions on a time interval $[s_0,s_1]$,
  such that $(\wh{t}_n(s_0), \wh{q}_n (s_0))=(t_0^n, q_0^n) $ for every $n \in
  \N$, with $(t_0^n, q_0^n) \to (t_0,q_0)$ and $m_n:=\wh t_n'+|\wh
  q_n'|\weakto m$ in $L^1(s_0,s_1)$.  Then, there exists a parametrized metric
  solution $(\wh{t}_\infty, \wh{q}_\infty)$, starting from $(t_0,q_0)$, such
  that, up to the extraction of a subsequence, $(\wh{t}_n, \wh{q}_n) \to
  (\wh{t}_\infty, \wh{q}_\infty)$ uniformly in $[s_0,s_1]$, with $(\wh{t}_n',
  |\wh{q}_n'|) \weakto (\wh{t}_\infty', |\wh{q}_\infty'|)$ in
  $\rmL^1(s_0,s_1)$.  In fact, in \cite{MiRoSa08?VVLM} we shall prove the
  above result, as well as the vanishing viscosity analysis of
  Theorem~\ref{3.3},  in the more general setting detailed in
  Section~\ref{s:outlook}.
\end{remark}

\section{BV solutions} \label{ss:BB.sol}

Before introducing the notion of BV solution to the rate-independent system
driven by $\calE$, we recall some definitions and properties of BV
functions on $[0,T]$ with values in the space $(\calQ,d)$ introduced in the
previous section. Note that, however, the following notions are indeed
independent of the Finsler setting \eqref{assq}--\eqref{asserre} and can be
given for a general complete metric space.

\paragraph{Preliminaries on BV functions.}
Given a function $q:[0,T]\to \calQ$ and an interval $I\subset[0,T]$,
we define its variation on  $I$ by
\begin{equation}
\label{e:4.1}
\begin{gathered} \VAR(q,I):=\sup\sum^n_{j=1}d(q(\tau_{j-1}),q(\tau_j)),
\end{gathered}
\end{equation}
where $\sup$ is taken over all $n\in\N$ and all partitions
$\tau_0<\tau_1\cdots<\tau_{n-1}<\tau_n$ with $\tau_0,\tau_n\in I$.
We set
\[
\BV([0,T];\calQ)=\set{q:[0,T]\to \calQ}{\VAR(q,[0,T])<\infty},
\]
where we emphasize that functions are defined everywhere, as is common for
rate-independent processes.
For $q\in\BV([0,T];\calQ)$ and $t\in[0,T]$ the left and
right limits exist:
\[
q(t^-):=\lim_{h\searrow 0}q(t{-}h)\quad\text{and}\quad
q(t^+):=\lim_{h\searrow 0} q(t{+}h),
\]
where we put $q(0^-)=q(0)$ and $q(T^+)=q(T)$. In general,
the three values $q(t^-), q(t),$ and $q(t^+)$ may differ. We define
the continuity set $\rmC_q$ and the jump set $\rmJ_q$~by
\[
\ba{l} \rmC_q=\set{t\in[0,T]}{q(t^-)=q(t)=q(t^+)}, \qquad
\rmJ_q=[0,T]\setminus \rmC_q. \ea
\]
Indeed, our definition of ``Var'' is such that we have for all
$0\leq r\leq s\leq t\leq T$
\begin{equation}
\label{e:var1} \VAR(q,[r,s])=d(q(r),q(r+))+ \VAR(q,(r,s))+d(q(s-),q(s))\,,
\end{equation}
and the additivity property
\begin{equation}
\label{additiv} \VAR(q,[r,t])=
\VAR(q,[r,s])+\VAR(q,[s,t]).
\end{equation}
When calculating the variation of $q$ over an interval $I$,  one has
to be careful \RRSS with \RREE  (possible) jumps at the boundary of
$I$, if $I$ contains  boundary points. Now,  for a function
$q\in\BV([0,T],\calQ)$ we introduce the nondecreasing function
\[
V_q:[0,T]\to[0,\infty)\,, \qquad  V_q (t):= \VAR(q,[0,t]).
\]
The distributional derivative of $V_q$ defines a nonnegative Radon
measure $\mu_q$ such that
 \begin{equation}\label{e4.2}
  \mu_q([s,t])= V_q(t)-V_q(s)
\qquad \forall\, t,\, s \in \rmC_q\,,
\end{equation}
and, more generally (see \cite[2.5.17]{Fede69GMT})
\begin{equation}
  \label{e.Stieltjes}
  \int_0^T \zeta(t)\,\dd V_q(t)=\int_0^T \zeta(t)\, \mu_q(\dd t)
  \quad\text{for all } \zeta\in \rmC^0_\rmc(0,T),
\end{equation}
where $\int_0^T \zeta \dd V_q$ denotes the Riemann-Stieltjes integral.
 As usual, $\mu_q$ can be
decomposed into a \emph{continuous} (also called \emph{diffuse})
part $\mu_q^\cont$ and a discrete part $\mu_q^\discr $, where for
\RRSS  a Borel set \RREE  $A\subset[0,T]$ we have
\begin{equation}\label{e4.3}
\mu_q^\discr (A)=\mu_q(A\cap \rmJ_q)=\sum_{t\in A \cap \rmJ_q}d(q(t^-), q(t))+d(q(t),q(t^+))\,,
\end{equation}
in accordance with formula \eqref{e:var1} above.

In the following technical lemma (whose proof is postponed to the
end of this section), we will discuss the link between a BV map
$q:[0,T]\to \calQ$ and its graph $\gph(t)=(t,q(t))$ in the extended
state space $\extQ$, endowed with the distance $
d_\extQ((t_0,q_0),(t_1,q_1)):=|t_0{-}t_1|+d(q_0,q_1)$ \GCOMMENT{(see
Remark \ref{rem:dist-ph-sp})}.  We denote by $\calL_{[a,b]}$ the
Lebesgue measure on the interval $[a,b]$,  whereas $\calL$
denotes a general one-dimensional Lebesgue \RRSS measure.

\begin{lemma}
  \label{le:chain}
  Let $q\in \BV([0,T];\calQ)$ and $\gph \in \BV([0,T];\extQ)$ with
  $\gph (t):=(t,q(t))$. Set
  \begin{displaymath}
    \rho(t):=V_q(t)=\VAR(q,[0,t]),\ R:=\rho(T);\quad
    \sigma (t):=V_\gph (t)=\VAR(\gph,[0,t]),\ S:=\sigma(T),
  \end{displaymath}
  with their right-continuous inverse
  \begin{displaymath}
    \wh \tau(r):= \sup\set{t\in [0,T]}{V_q(t)=\rho(t)<r},\quad
    \wh t(s):=\sup\set{t\in [0,T]}{V_\gph(t)=\sigma(t)<s}.
  \end{displaymath}
 Then, the following statements hold:
  \begin{enumerate}[\bf A)]
  \item $\sigma (t)=t{+}\rho(t), \ \ \rmJ_\gph =\rmJ_q,\ \
    \rmC_\gph =\rmC_q, \ \ \mu_\gph =\calL+\mu_q, \ \
    \mu^\cont_\gph =\calL+\mu^\cont_q, \ \ \mu_\gph ^\discr=\mu_q^\discr$.
  \item There exist $1$-Lipschitz maps
    $\wh \gph =(\wh t,\wh q):[0,S]\to \extQ$ and
    $\wh\rho:[0,S]\to [0,R]$ such that
    $\gph (t)=\wh \gph (\sigma (t))$, $\rho(t)=\wh\rho (\sigma (t))$ for all
    $t\in [0,T]$. The map $\wh t$ is uniquely determined,
    it is  \RRSS the right-continuous \RREE  inverse of $\sigma $, and it is injective on
    $\wh \rmC_q:=\sigma (\rmC_q)=\wh t^{-1}(\rmC_q)$.
    The maps $\wh \gph$
    and $\wh\rho $ are uniquely determined on the set
    $\wh\rmC_q$ and satisfy $\wh q(s)=q(\wh t(s))$ and
    $\wh\tau(\wh\rho (s))=\wh t(s)$
    for all $s\in \wh\rmC_q$.
  \item $\wh t_\# (\calL_{[0,S]})=\mu_\gph $
    and $\wh\tau_\#(\calL_{[0,R]})=\mu_q$, in the sense that
    for all bounded Borel function $\zeta:[0,T]\to \R$ and
    Borel set $A\subset [0,T]$
    \begin{equation}
      \label{e.change_of_var}
      \int_{\wh t^{-1}(A)} \zeta(\wh t(s))\,\dd s=
      \int_A \zeta(t)\,\mu_\gph (\dd t),\quad
      \int_{\wh\tau^{-1}(A)} \zeta(\wh\tau(r))\dd r=
      \int_A \zeta(t)\,\mu_q(\rmd t).
    \end{equation}
    In particular, if  $A\subset [0,T], B\subset \rmC_q\subset [0,S]$
    are Borel sets, then
    \begin{equation}
      \label{e.null1}
      \mu_\gph (A)=\calL(\wh t^{-1}(A)),\qquad
      \calL(B)=\mu_\gph (\wh t(B)).
    \end{equation}
  \item The Lebesgue densities of the measures
    $\calL,\mu_q^\cont \ll\mu_\gph ^\cont$
    with respect to $\mu_\gph ^\cont$ are expressed by the formulae
    \begin{equation}
      \label{e.densities}
      \frac {\dd\calL}{\dd\mu^\cont_\gph }=\wh t'\circ \sigma ,\quad
      \frac {\dd\mu_q^\cont}{\dd\mu^\cont_\gph }=|\wh q'|\circ \sigma =
      \wh\rho'\circ \sigma .
    \end{equation}
  \end{enumerate}
\end{lemma}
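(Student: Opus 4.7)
\medskip
\noindent\textbf{Proof plan for Lemma \ref{le:chain}.}

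For Part A), I would argue directly from the definition of the distance on $\extQ$. For any partition $0\le \tau_0<\tau_1<\cdots<\tau_n\le t$ of $[0,t]$, one has
\[
\sum_{j=1}^n d_\extQ(\gph(\tau_{j-1}),\gph(\tau_j))=\sum_{j=1}^n (\tau_j-\tau_{j-1})+\sum_{j=1}^n d(q(\tau_{j-1}),q(\tau_j)),
\]
and the first sum telescopes to $\tau_n-\tau_0$. Taking the supremum over all partitions (and letting $\tau_0\searrow0$, $\tau_n\nearrow t$ after accounting for the boundary-jump convention \eqref{e:var1}) yields $\sigma(t)=t+\rho(t)$. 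From this identity the equalities $\rmJ_\gph=\rmJ_q$, $\rmC_\gph=\rmC_q$ and $\mu_\gph=\calL+\mu_q$ are immediate, and the corresponding splitting into diffuse and discrete parts follows since $\calL$ is purely continuous.

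For Part B), I would first observe that Part A) implies $\sigma(t_2)-\sigma(t_1)\ge t_2-t_1$ whenever $t_1\le t_2$, so the right-continuous inverse
\(\wh t(s):=\sup\{t\in[0,T]:\sigma(t)<s\}\)
is automatically $1$-Lipschitz on $[0,S]$, nondecreasing, and uniquely determined. I would then define $\wh q$ and $\wh\rho$ on the ``good'' set $\wh\rmC_q=\sigma(\rmC_q)$ by $\wh q(s):=q(\wh t(s))$ and $\wh \rho(s):=\rho(\wh t(s))$; on this set $\sigma$ is injective and continuous from its image, so the definitions are unambiguous and the identities $\wh\tau(\wh\rho(s))=\wh t(s)$ follow from pairing the inverses of $\sigma$ and $\rho$. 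The main subtlety, and I expect the main obstacle, is the extension of $\wh q$ across jumps. At each $t\in\rmJ_q$ the interval $[\sigma(t^-),\sigma(t^+)]$ has positive length equal to $d(q(t^-),q(t))+d(q(t),q(t^+))$ with $\wh t\equiv t$ on it; on such a ``vertical'' segment I would use the geodesic structure of the complete Finsler manifold $(\calQ,d)$ to concatenate unit-speed geodesics from $q(t^-)$ to $q(t)$ and from $q(t)$ to $q(t^+)$, parametrizing them on the two subintervals of length $d(q(t^-),q(t))$ and $d(q(t),q(t^+))$. The resulting $\wh q$ is then $1$-Lipschitz because on each piece it has unit speed in $\calQ$ while $\wh t$ is constant, so $d_\extQ(\wh\gph(s_1),\wh\gph(s_2))\le|s_2-s_1|$; a parallel construction, with geodesics in $[0,R]$ replaced by linear interpolation, defines $\wh\rho$.

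For Part C), I would invoke the classical change-of-variable identity for the right-continuous inverse of a nondecreasing function (see, e.g., \cite[Sec.~2.5]{Fede69GMT} or \cite[Sec.~1.1]{AmGiSa05GFMS}): if $\sigma:[0,T]\to[0,S]$ is nondecreasing and right-continuous and $\wh t$ is its right-continuous inverse, then $\wh t_\#\calL_{[0,S]}=\mu_\gph$ as Borel measures on $[0,T]$, which via monotone-class arguments gives the integral identity in \eqref{e.change_of_var}. The same argument with $\rho$ in place of $\sigma$ yields the second formula. The set-theoretic assertions \eqref{e.null1} then specialize the first identity with $\zeta=\mathbf{1}_A$, using that $\wh t$ is injective on $\wh\rmC_q$ and $\mu_\gph^\discr$ concentrates on $\rmJ_q$.

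Finally, for Part D), I would combine the Radon--Nikodym decomposition with the change of variable from Part C). Restricting to continuity points, $\wh t\circ\sigma=\mathrm{id}$ and $\wh \rho\circ\sigma=\rho$ $\mu_\gph^\cont$-a.e.; differentiating with respect to $\mu_\gph^\cont$ yields the densities via the chain rule for absolutely continuous monotone functions, noting that $\wh t'+|\wh q'|=\wh t'+\wh\rho'=1$ a.e.\ on $[0,S]$ by the $1$-Lipschitz construction in B) and by Part A).
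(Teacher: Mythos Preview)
Your plan is correct and follows essentially the same route as the paper. Parts A)--C) match almost verbatim: the paper also derives A) directly from the definition, obtains B) by citing the reparametrization result \cite[2.5.16]{Fede69GMT} together with the existence of geodesics in $(\calQ,d)$ for the extension across jumps (exactly your construction), and deduces C) from \cite[2.5.18(3)]{Fede69GMT} via the Riemann--Stieltjes identity \eqref{e.Stieltjes}.

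The only mild difference is in Part D). You propose to differentiate the identities $\wh t\circ\sigma=\mathrm{id}$ and $\wh\rho\circ\sigma=\rho$ with respect to $\mu_\gph^\cont$ and read off the densities. The paper instead goes the other way round: it starts from the Lipschitz change-of-variable formula $\int_0^T\zeta\,\dd t=\int_0^S\zeta(\wh t(s))\,\wh t'(s)\,\dd s$, uses that $\wh t'=0$ $\calL$-a.e.\ on $\wh\rmJ_q$ to restrict the integral to $\wh\rmC_q$, and then applies the push-forward identity of Part C) to convert it into $\int_{\rmC_q}\zeta\,(\wh t'\circ\sigma)\,\dd\mu_\gph^\cont$; the argument for $\mu_q^\cont$ is analogous, routed through $\wh\tau$ and $\wh\rho$. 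Your approach is legitimate but, as stated, the phrase ``chain rule for absolutely continuous monotone functions'' is slightly loose: $\sigma$ is only $\BV$, so what you really need is the $\BV$ chain rule for the composition of a Lipschitz outer function with a $\BV$ inner function (as in \cite{AmbDMa90AGCR}, which the paper invokes in the proof of Proposition~\ref{p.ChainRuleBV}). Once you make that precise, the two arguments are equivalent.
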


\paragraph{The notion of BV solution.}
%
Let us first introduce a new family of $1$-homogeneous dissipation functionals $\DS\alpha t\cdot\cdot:\rmT\calQ\to[0,\infty)$,
depending on the two parameters $\alpha\in [0,\infty)$ and $t\in [0,T]$, defined as
\begin{equation}
  \label{eq:DS}
  \DS\alpha t q v:= \max\big\{\Slope\calE tq,\alpha\big\}\,\calR_1(q,v)
\end{equation}
Notice that for all $\alpha>0$, $t\in [0,T]$, and  $q\in \calQ$ the
functional  $\DS \alpha tq\cdot$ is a norm on $\rmT_q\calQ$ (possibly
degenerate, when $\alpha=0$), thus satisfying condition \eqref{asserre}. As in
Section \ref{sez3.1}, we can therefore consider the corresponding Finsler
distances $\dS \alpha t\cdot\cdot:\calQ\ti \calQ\to [0,\infty)$ via
\begin{equation}\label{e.4.1}
\dS \alpha t{q_0}{q_1}:=
\inf\bigset{\int^1_0 \DS\alpha t {y(s)}{y'(s)}\dd
s}{y\in\calA(q_0,q_1)} \,.
 \end{equation}
The functional $\distS_0$ is called \emph{slope distance}
and $\dS \alpha t\cdot\cdot$ also admits the equivalent formulation
in terms of the metric velocity
\begin{equation}\label{e.4.1bis}
\dS\alpha t{q_0}{q_1}:=
\inf\bigset{\int^1_0 \max\big\{\Slope\calE t{y(s)},\alpha\big\}
  \,|y'|(s)\dd s}{y\in\calA(q_0,q_1)} \,.
 \end{equation}
For $\alpha>0$ the infimum in \eqref{e.4.1} and \eqref{e.4.1bis} is attained.

A straightforward consequence of the symmetry
$\calR_1(q,-v)=\calR_1(q,v)$ (see \eqref{asserre}) is  that $\dS
\alpha t{q_0}{q_1} = \dS \alpha t{q_1}{q_0}$. Using  the chain
rule inequality \eqref{e3.6}  \RRSS we find \RREE
\begin{equation}
  \label{S_bounds_E} |\calE(t, q_1) - \calE(t,q_0)| \leq
  \dS 0t{q_0}{q_1}\le \dS \alpha t{q_0}{q_1}
  \quad \text{for all } (t,q_0,q_1) \in [0,T] \times
  \calQ \times \calQ\,.
\end{equation}

The notion of BV solution to \RRSS the rate-independent system \RREE
$(\calQ,d,\calE)$, which we are going to introduce, relies on a
version of the chain rule for BV functions with values in a metric
space. In order to state it, for a general $q \in \BV ([0,T];\calQ)$
and $0\leq t_0 \le t_1 \leq T$ we define
\begin{equation}\label{e4.4}
\begin{aligned} \Sigma_0(q, [t_0,t_1]):=& \int_{t_0}^{t_1}
\Slope\calE r{q(r)}\, \mu_q^\cont(\rmd r)
+\dS 0{t_0}{q(t_0)}{q(t_0^+)}+\dS0{t_1}{q(t_1^-)}{q(t_1)}\\
&+\sum\limits_{t\in \rmJ_q\cap
(t_0,t_1)}[\dS0t{q(t^-)}{q(t)}
{+}\dS0t{q(t)}{q(t^+)} ]\,.
\end{aligned}
\end{equation}
Based on \eqref{S_bounds_E}, we define a second functional $\Gamma$
via
\begin{equation}\label{e4.Gamma}
\begin{aligned} \Gamma(q, [t_0,t_1]):=
& \int_{t_0}^{t_1}
 \Slope\calE r{q(r)}\, \mu_q^\cont(\rmd r)\\
&+|\calE(t_0,q(t_0)){-} \calE(t_0,q(t_0^+))|
 +|\calE(t_1,q(t_1^-)){-} \calE(t_1,q(\RRSS t_1 \RREE))|\\
&+\sum\limits_{t\in \rmJ_q\cap (t_0,t_1)}\!\!\Big[\calE(t,q(t)){-} \calE(t,q(t^+))|
 +|\calE(t,q(t^-)){-} \calE(t,q(\RRSS t \RREE))|\Big]\,.
\end{aligned}
\end{equation}
Obviously, we have $\Sigma_0(q,[s,t])\geq \Gamma(q,[s,t])\geq 0$ and
both functionals $\Sigma_0(q,\cdot)$ and $\Gamma(q,\cdot)$  fulfill
the additivity property \eqref{additiv}, when considered as
functions on intervals.

\begin{proposition}
\label{p.ChainRuleBV}
Under assumptions \eqref{assq}, \eqref{asserre}, and \eqref{assene}, the
following chain rule inequality holds for all $q\in\BV([0,T],\calQ)$ and $0\leq
t_0\leq t_1\leq T$:
\begin{equation}\label{e4.5}
\calE (t_1,q(t_1))-\calE (t_0,q(t_0))
-\int_{t_0}^{t_1}\pl_t\calE (t,q(t))\dd
t \geq -\Gamma(q,[t_0,t_1])\geq  -\Sigma_0(q,[t_0,t_1])\,.
\end{equation}
\end{proposition}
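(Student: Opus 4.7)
The second inequality $-\Gamma(q,[t_0,t_1])\ge -\Sigma_0(q,[t_0,t_1])$ is immediate from \eqref{S_bounds_E}, which dominates each absolute energy difference in $\Gamma$ by the corresponding slope distance in $\Sigma_0$, while the diffuse parts coincide. I therefore concentrate on the nontrivial bound
\begin{equation*}
\calE(t_1,q(t_1))-\calE(t_0,q(t_0))-\int_{t_0}^{t_1}\pl_t\calE(r,q(r))\dd r\ge -\Gamma(q,[t_0,t_1]).
\end{equation*}

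The idea is to transfer everything to the $1$-Lipschitz graph reparametrization $\wh\gph=(\wh t,\wh q):[0,S]\to\extQ$ supplied by Lemma~\ref{le:chain}, which satisfies $\wh\gph(\sigma(t))=(t,q(t))$ for every $t\in[0,T]$. Since $\calE\in\rmC^1(\extQ)$ and $\wh\gph$ is Lipschitz, $\calE\circ\wh\gph$ is absolutely continuous on $[0,S]$ and the classical chain rule yields
\begin{equation*}
\calE(t_1,q(t_1))-\calE(t_0,q(t_0))=\int_{\sigma(t_0)}^{\sigma(t_1)}\pl_t\calE(\wh\gph)\,\wh t'\dd r+\int_{\sigma(t_0)}^{\sigma(t_1)}\langle\rmD_q\calE(\wh\gph),\wh q'\rangle\dd r.
\end{equation*}
By the density $\dd\calL/\dd\mu_\gph^\cont=\wh t'\circ\sigma$ in \eqref{e.densities} (together with $\wh t'\equiv 0$ on the complement of $A_c:=\wh t^{-1}(\rmC_q)$), the first integral on the right reduces to $\int_{t_0}^{t_1}\pl_t\calE(t,q(t))\dd t$, so it remains to estimate the second integral from below by the continuous and jump parts of $-\Gamma$.

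I split $[\sigma(t_0),\sigma(t_1)]=(A_c\cap[\sigma(t_0),\sigma(t_1)])\cup A_j$, where $A_j$ is the disjoint union of the jump sub-intervals $[\sigma(t^-),\sigma(t^+)]$ for $t\in\rmJ_q$ (possibly truncated at the endpoints). On the continuous piece, inequality \eqref{useful-later} combined with the density $\dd\mu_q^\cont/\dd\mu_\gph^\cont=|\wh q'|\circ\sigma$ gives
\begin{equation*}
\int_{A_c\cap[\sigma(t_0),\sigma(t_1)]}\langle\rmD_q\calE(\wh\gph),\wh q'\rangle\dd r\ge -\int_{t_0}^{t_1}\Slope\calE{r}{q(r)}\mu_q^\cont(\rmd r),
\end{equation*}
matching exactly the diffuse part of $-\Gamma$. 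On each jump sub-interval $\wh t$ is constant, so the chain rule produces the \emph{exact} identity $\int_{\sigma(t^-)}^{\sigma(t^+)}\langle\rmD_q\calE(t,\wh q),\wh q'\rangle\dd r=\calE(t,q(t^+))-\calE(t,q(t^-))$ for internal jumps, and analogously the half-jump contributions $\calE(t_0,q(t_0^+))-\calE(t_0,q(t_0))$ and $\calE(t_1,q(t_1))-\calE(t_1,q(t_1^-))$ at the endpoints. Splitting each internal identity at $\sigma(t)$ into the two parts involving $q(t)$ and applying the trivial bound $a\ge-|a|$ to every resulting term produces precisely the jump part of $-\Gamma(q,[t_0,t_1])$; summing with the continuous estimate finishes the argument.

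The only delicate point, rather than a serious obstacle, is the bookkeeping when $t_0$ or $t_1$ lies in $\rmJ_q$: then $\sigma(t_0)$ sits strictly inside $[\sigma(t_0^-),\sigma(t_0^+)]$, so only the half-jump from $q(t_0)$ to $q(t_0^+)$ is traversed and contributes $\calE(t_0,q(t_0^+))-\calE(t_0,q(t_0))$, matching the boundary term $|\calE(t_0,q(t_0))-\calE(t_0,q(t_0^+))|$ built into the definition of $\Gamma$, and symmetrically at $t_1$. No further density or regularity issue arises beyond what Lemma~\ref{le:chain} already provides.
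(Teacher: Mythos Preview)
Your proof is correct and uses the same Lipschitz graph reparametrization $\wh\gph=(\wh t,\wh q)$ from Lemma~\ref{le:chain} as the paper does, but the two arguments organize the computation differently. The paper works at the level of the distributional derivative $\eta=\frac{\rmd}{\rmd t}E$ of $E(t)=\calE(t,q(t))$: it reduces \eqref{e4.5} to the inequality of measures $\eta^\cont\ge\pl_t\calE\,\calL-\Slope\calE{\cdot}{q(\cdot)}\mu_q^\cont$ (the jump part being immediate), and then obtains this diffuse estimate by invoking the Ambrosio--Dal~Maso chain rule $\eta^\cont=(\wh E'\circ\sigma)\,\mu_\gph^\cont$ together with the densities \eqref{e.densities}. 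You instead bypass the measure-theoretic chain rule entirely: you write $\calE(t_1,q(t_1))-\calE(t_0,q(t_0))$ directly as a Lebesgue integral over $[\sigma(t_0),\sigma(t_1)]$ via the absolute continuity of $\calE\circ\wh\gph$, and then split the integration domain into $A_c=\wh t^{-1}(\rmC_q)$ and the jump sub-intervals by hand. Your route is more elementary and self-contained (no need for \cite{AmbDMa90AGCR}), at the price of the explicit endpoint bookkeeping you mention; the paper's route is conceptually cleaner but imports an external result. One small remark: your appeal to the density formulas \eqref{e.densities} is slightly indirect, since those live on $[0,T]$ while your integrals live on $[0,S]$; what you are really using is the area/change-of-variable formula for the monotone Lipschitz map $\wh t$ (for the $\pl_t\calE$ term) and the identity $\int_{\rmC_q}\zeta\,\dd\mu_q^\cont=\int_{A_c}\zeta(\wh t(s))|\wh q'|(s)\,\dd s$ established in the proof of Lemma~\ref{le:chain}~D).
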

\begin{proof}
  The function \RRSS $t\mapsto E(t):=\calE(t,q(t))$ \RREE  is of bounded variation
  \RRSS on $[0,T]$ \RREE and its jump set
  is contained in $\rmJ_q$.  We denote by $\eta=\frac \rmd{\rmd t} E$ its
  distributional derivative (a bounded Radon measure on $(0,T)$) and by
  $\eta^\cont$ its diffuse part, defined as $\eta^\cont(A):=\eta(A\cap
  \rmC_q)$ \RRSS  for all Borel sets $A \subset [0,T]$. \RREE
    Thanks to \eqref{S_bounds_E},  we have \eqref{e4.5}
  if we show that
\begin{equation}
  \label{eq:2}
  \eta^\cont\ge \partial_t \calE(\cdot,q(\cdot))\calL- \Slope\calE
  \cdot{q(\cdot)}\mu_q^\cont.
\end{equation}
We introduce the maps $\sigma $ and $\wh \gph =(\wh t,\wh q)$ as in
Lemma \ref{le:chain} and we set $\wh E(s):=\calE(\wh t(s),\wh q(s))$
\RRSS for all $s \in [0,S]$, \RREE so that $E(t)=\wh E(\sigma (t))$
for all $t\in [0,T]$. Since $\wh t,\wh q$ are Lipschitz continuous
and $\calE$ is of class $\rmC^1$, the classical chain rule \RRSS
\eqref{classical-ch-rule}--\eqref{e3.6} \RREE yields
\begin{equation}
  \label{eq:3}
  \wh E'(s) \ge\partial_t\calE(\wh t(s),\wh q(s)) \wh t'(s)-
  \Slope\calE{\wh t(s)}{\wh q(s))} |\wh q'|(s)\quad
  \text{for $\calL$-a.a.\ $s\in (0,S)$}.
\end{equation}
On the other hand, since $\wh E$ is a Lipschitz map and since
$\frac {\rmd \sigma }{\rmd t}=\mu_\gph $,
the general chain rule of \cite{AmbDMa90AGCR} yields
\begin{equation}
  \label{eq:5}
  \eta^\cont=({\RRSS\wh E' \RREE}\circ \sigma) \, \mu^\cont_\gph \ge
  \Big(\partial_t\calE(t,q(t)) \wh t'\circ \sigma -
  \Slope\calE{t}{q(t)} |\wh q'|\circ \sigma \Big)\mu^\cont_\gph .
\end{equation}
Taking into account \eqref{e.densities}, we conclude (notice that
$\wh E'\circ \sigma$ is well defined $\mu_\gph^\cont$-a.e., since,
for every Lebesgue negligible set $N\subset \wh \rmC_\gph=
\sigma(\rmC_\gph)$, \eqref{e.null1} yields
$\mu^\cont_\gph(\sigma^{-1}(N))=0$).
\end{proof}
Now we are able to define the notion of BV solution. The formulation
is more complicated \RRSS  than the one defining parametrized metric
solutions, \RREE but it nicely reflects the different flow regimes
of rate-independent flow, and the jumps. A shorter but much more
implicit formulation will be given in Remark \ref{prop:BV_alter}.

\begin{definition}[BV solution]
\label{def:BVsln}
Let $(\calQ,d,\calE)$ satisfy \eqref{assq}, \eqref{asserre},
\eqref{assene}. A function $q\in\BV([0,T];\calQ)$ is called a \emph{BV
  solution} of the rate-independent system $(\calQ,d,\calE )$, if  the
following four conditions hold:
\begin{subequations}
\label{e4.6} \label{e.defBVsln}
\begin{align}
\label{e.BVen}
&\begin{aligned}
  \calE (t_1,q(t_1))-\calE (t_0,q(t_0))- & \int^{t_1}_{t_0}
  \pl_t\calE (t,q(t))\dd t \\ &\leq -\Sigma_0(q,[t_0,t_1]) \
  \text{for }0\leq t_0 < t_1\leq T; \end{aligned}
\\[0.3em]
\label{e.BVa}
& \Slope\calE t{q(t)} \leq 1\quad\text{for }t\in[0,T]\setminus \rmJ_q;
\\[0.3em]
\label{e.BVb}
&  \Slope\calE t{q(t)} \geq 1\quad\text{for }t \in \sppt (\mu_q);
\\[0.3em]
\label{e.BVc}
&\begin{aligned}
 &\text{for } t\in \rmJ_q\quad\text{there exist } y^t\in \calA(q(t^-),q(t^+))
    \text{ and } \theta^t\in[0,1]\text{ such that}\\
  &(\alpha)\quad y^t(\theta^t)=q(t),\\
  &(\beta)\quad
  \Slope\calE t{{\RRSS y^t(\theta)\RREE}}\geq 1\quad
   \text{for all }\theta\in[0,1],\\
  & \ts (\gamma)\quad \calE (t,q(t^+))-\calE (t,q(t^-))=
    -\int^1_0
    \Slope\calE t{y^t(\theta)}\,|y'|(\theta)\dd \theta.
  \end{aligned}
\end{align}
\end{subequations}
\end{definition}

Again we point out that, due to the chain rule inequality
\eqref{e4.5}, relation \RRSS \eqref{e.BVen} \RREE holds as an
equality, which is the energy balance. Using this energy identity on
the intervals $[t{-}h,t]$ and $[t,t{+}h]$ and letting $h\searrow 0$
leads to the first two of the  following jump relations, which will
be used later \RRSS (recall the definition \eqref{e.4.1} of
$\distS_1$): \RREE
\begin{equation}\label{e.BVjump}
\ba{l}
\calE(t,q(t^-)){-}\calE(t,q(t))=\dS0t{q(t^-)}{q(t)}
=\dS1t{q(t^-)}{q(t)}, \\
\calE(t,q(t)){-}\calE(t,q(t^+))=\dS0t{q(t)}{q(t^+)}
=\dS1t{q(t)}{q(t^+)}, \\
\calE(t,q(t^-)){-}\calE(t,q(t^+))=\dS0t{q(t^-)}{q(t^+)}
=\dS1t{q(t^-)}{q(t^+)},
\ea
\end{equation}
for each $t\in \rmJ_q$. The third relation follows from \eqref{e.BVc}. By the
definition of the slope distance $\distS_0$,  these jump relations already include
the existence of a connecting  gradient-flow curve $y\in
\calA(q(t^-),q(t^+))$, i.e.\ $(\alpha)$, $(\beta)$,
and $(\gamma)$ of \eqref{e.BVc} follow.

The above formulation of BV solutions looks quite lengthy compared
to the more elegant forms of gradient-like flows, which can be
characterized by one inequality, cf.\ e.g., \eqref{e3.9} or
\eqref{e.DefPaMeSln_c}. However, this formulation reflects the
mechanical interpretation of the three different flow types quite
well, namely sticking, slipping and   jumping. The following result
presents a more compact form, which is however less tractable for
further analysis.

\begin{proposition}
\label{prop:BV_alter} \RRSS In the setting of \eqref{assq},
\eqref{asserre}, \eqref{assene}, \RREE
 let
$\Sigma_1(\cdot,[t_0,t_1])$
be the functional defined on $\BV([0,T],\calQ)$
via
\[
\begin{aligned} \Sigma_1(q, [t_0,t_1]):=& \int_{t_0}^{t_1}
\max\big\{\Slope\calE t{q(t)},1\big\} \mu_q^\cont(\rmd t)
+ \int_{t_0}^{t_1}\Big(\Slope\calE t{q(t)}-1 \Big)^+ \dd t\\
&+\dS1{t_0}{q(t_0)}{q(t_0^+)}+\dS1{t_1}{q(t_1^-)}{q(t_1)}\\&
+\sum\limits_{t\in \rmJ_q\cap(t_0,t_1)} [\distS_1 (t,q(t^-),q(t))
{+}{\RRSS \distS_1(t,q(t),q(t^+))\RREE}].
\end{aligned}
\]
Then, $q\in \BV([0,T];\calQ)$ is a BV solution if and only if
\begin{equation}\label{e.BVineq}
\calE (t_1,q(t_1))-\calE (t_0,q(t_0))-\int^{t_1}_{t_0}\pl_t\calE (t,q(t))\dd t
 \leq -\Sigma_1(q,[t_0,t_1]) \text{ for } 0\leq t_0 < t_1\leq T\,.
\end{equation}
\end{proposition}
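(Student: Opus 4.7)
The plan is to prove both implications by sandwiching the quantity $E(t_1,q(t_1))-\calE(t_0,q(t_0))-\int_{t_0}^{t_1}\pl_t\calE(t,q(t))\,\dd t$ between three functionals $-\Sigma_1$, $-\Sigma_0$, and $-\Gamma$, exploiting the chain rule inequality \eqref{e4.5} and the key relation
\[
  \Sigma_1(q,[t_0,t_1])\ge \Sigma_0(q,[t_0,t_1])\ge \Gamma(q,[t_0,t_1]).
\]
The first inequality uses $\max\{\Slope\calE t{q(t)},1\}\ge \Slope\calE t{q(t)}$ (in the continuous part), the positivity of $\int(\Slope\calE t{q(t)}-1)^+\dd t$, and the pointwise bound $\distS_1\ge \distS_0$ that is obvious from definition \eqref{e.4.1}. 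The second inequality is a direct consequence of the frozen-time chain rule applied to any admissible curve $y\in\calA(q_0,q_1)$, yielding $\distS_0(t,q_0,q_1)\ge |\calE(t,q_1)-\calE(t,q_0)|$.

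For the forward direction (BV solution $\Rightarrow$ \eqref{e.BVineq}), I start from the energy inequality \eqref{e.BVen} of Definition~\ref{def:BVsln}, which combined with Proposition~\ref{p.ChainRuleBV} forces equality $\text{LHS}=-\Sigma_0$. Then I verify $\Sigma_0=\Sigma_1$ term by term: the continuous part coincides because \eqref{e.BVb} yields $\Slope\calE t{q(t)}\ge 1$ on $\sppt(\mu_q^\cont)$, so $\max\{\Slope,1\}=\Slope$ $\mu_q^\cont$-a.e., and \eqref{e.BVa} gives $\Slope\calE t{q(t)}\le 1$ on $\rmC_q$ which has full Lebesgue measure, making $\int(\Slope-1)^+\dd t=0$; the jump terms coincide because the curve $y^t$ of \eqref{e.BVc} satisfies $\Slope\calE t{y^t(\theta)}\ge 1$ along its image, so $\int_0^1 \Slope|y'|\dd\theta=\int_0^1 \max\{\Slope,1\}|y'|\dd\theta$, and this common value equals both $\distS_0$ and $\distS_1$ (using that $\distS_1\ge \distS_0$, the energy jump identity $(\gamma)$, and splitting the curve at $\theta^t$ to handle the individual pairs $(q(t^-),q(t))$ and $(q(t),q(t^+))$).

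For the converse, I assume \eqref{e.BVineq} and combine it with the chain rule lower bound $\text{LHS}\ge -\Gamma$ and the sandwich $\Sigma_1\ge \Sigma_0\ge\Gamma$, forcing equality of all three functionals and of each of the individual nonnegative terms appearing in their difference. From $\int(\Slope\calE t{q(t)}-1)^+\dd t=0$ on every interval, combined with the continuity of $t\mapsto \Slope\calE t{q(t)}$ on $\rmC_q$ (which is dense with complement of Lebesgue measure zero), I obtain condition \eqref{e.BVa}. The identity $\max\{\Slope,1\}\mu_q^\cont=\Slope\,\mu_q^\cont$ forces $\Slope\ge 1$ $\mu_q^\cont$-a.e.; together with the jump-set part of condition \eqref{e.BVc}$(\beta)$ (once constructed), this yields \eqref{e.BVb}. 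Condition \eqref{e.BVen} is \eqref{e.BVineq} itself combined with the chain rule sandwich.

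The hard step is producing, at each $t\in \rmJ_q$, the curve $y^t$ required by \eqref{e.BVc}. My plan is to take minimizers $y_-$, $y_+$ of $\distS_1(t,q(t^-),q(t))$ and $\distS_1(t,q(t),q(t^+))$ (existence for $\alpha=1$ is asserted after \eqref{e.4.1bis}) and concatenate them, with $\theta^t$ being the parameter of the junction, producing $(\alpha)$. From $\distS_1=\distS_0=|\Delta\calE|$ on each half, the frozen-time chain rule must hold with equality along the minimizer, and moreover the identities $\int\max\{\Slope,1\}|y'|\dd\theta=\int \Slope|y'|\dd\theta$ force $\Slope\ge 1$ $|y'|\dd\theta$-almost everywhere along the curve; to upgrade this to the pointwise inequality $(\beta)$ everywhere on $[0,1]$, I will reparametrize by eliminating intervals where the curve is stationary (which do not change the value of the integral). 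Property $(\gamma)$ then follows by summing the energy identities on the two halves.
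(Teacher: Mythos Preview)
Your approach is correct and essentially the same as the paper's. Both rely on the sandwich $\Sigma_1\ge\Sigma_0\ge\Gamma$ together with the chain rule inequality \eqref{e4.5} to force all three functionals to coincide, then read off \eqref{e.BVa}, \eqref{e.BVb}, \eqref{e.BVc} from the vanishing of the individual nonnegative terms in $\Sigma_1-\Sigma_0$; the paper obtains the jump inequality at each $t\in\rmJ_q$ by letting $t_0\nearrow t$, $t_1\searrow t$ in \eqref{e.BVineq}, which is equivalent to your direct use of the jump terms in $\Sigma_1=\Sigma_0=\Gamma$, and then, like you, takes minimizers of $\distS_1$ to build $y^t$. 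Your treatment is in fact slightly more explicit on two points the paper glosses over: the arclength reparametrization needed to upgrade $(\beta)$ from $|y'|\dd\theta$-a.e.\ to all $\theta\in[0,1]$, and the observation that \eqref{e.BVb} at jump points must be recovered from \eqref{e.BVc}$(\alpha)$--$(\beta)$ rather than from the diffuse part alone.
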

\begin{proof}
  It is clear that under conditions \eqref{e.BVa}, \eqref{e.BVb}, and
  \eqref{e.BVc} a BV solution $q$ satisfies
  \begin{equation}
    \label{e.01}
    \Sigma_0(q;[t_0,t_1])=\Sigma_1(q;[t_0,t_1])\quad
    \text{for }0\le t_0< t_1\le T,
  \end{equation}
  so that \eqref{e.BVen} yields \eqref{e.BVineq}.

  Conversely, if $q\in \BV([0,T];\calQ)$ satisfies
  \eqref{e.BVineq}, the chain rule \eqref{e4.5}
\RRSS and the inequality $\Sigma_0(\cdot ;[t_0,t_1]) \leq
\Sigma_1(\cdot;[t_0,t_1])$
   yield \RREE
  \eqref{e.BVen} and \eqref{e.01}.
  Choosing e.g. $t_0=0, t_1=T$ we get
  \begin{align*}
    0&=\int_0^T
    \Big(\max\big\{\Slope\calE t{q(t)},1\big\}-
    \Slope\calE t{q(t)}\Big)\mu_q^\cont(\rmd t)
    + \int_0^T\Big(\Slope\calE t{q(t)}-1 \Big)^+ \dd t\\
    &+\Big(\dS10{q(0)}{q(0^+)}-
    \dS00{q(0)}{q(0^+)}\Big)+\Big(\dS1T{q(T^-)}{q(T)}
    -\dS0T{q(T^-)}{q(T)}\Big)\\&
    +\sum\limits_{t\in \rmJ_q}
    \Big[\dS1t{q(t^-)}{q(t)}-
    \dS0t{q(t^-)}{q(t)}
    {+}\dS1t{q(t)}{q(t^+)}-
    \dS0t{q(t)}{q(t^+)}\Big].
  \end{align*}
  Since each addendum is nonnegative, we easily find \eqref{e.BVa}
  and \eqref{e.BVb} (recalling that $|\partial\calE|$ is continuous).
  Moreover, passing to the limit in \eqref{e.BVineq} as
  $t_0\nearrow t,\ t_1\searrow t$, with $t\in \rmJ_q$, we conclude
  \begin{displaymath}
    \calE(t,q(t^+))-\calE(t,q(t^-))\le -\dS1t{q(t^-)}{q(t)}-
    \dS1t{q(t)}{q(t^+)}.
  \end{displaymath}
  Recalling \eqref{e.4.1} and the chain rule, for every $t\in \rmJ_q$
  we find a curve $y^t$ satisfying condition \eqref{e.BVc}.
\end{proof}

\paragraph{BV and parametrized metric solutions.}
 We claim  that the notion of BV solution is essentially
the same as that of parametrized metric solution. Intuitively,
\eqref{e.BVen} corresponds to  \RRSS \eqref{e:conclu1}. \RREE
Further, \eqref{e.BVa} and \eqref{e.BVb} are the analog in the BV
setting of the first of \eqref{eq:equivalent3}, which encompasses
both sticking and rate-independent evolution (recall
Remark~\ref{rem:mechanical}). \RRSS The jumping \RREE  regime is
accounted for by condition \eqref{e.BVc}: at jump times, the system
switches to a viscous, rate-dependent behavior, following a path
described by a \emph{generalized gradient flow}, see ($\gamma$) in
\eqref{e.BVc}.

In order to formalize these considerations, we return to the trajectories in
$\extQ$.
%
Indeed, we may associate with each BV solution $q_{\BV}$ a
trajectory, by filling the jumps of the graph
$\set{(t,q_{\BV}(t))}{t\in[0,T])}$ with the curves ${\RRSS y^t
\RREE}\in\AC([0,1],\calQ)$, for $t\in J_{q_{\BV}}$. Thus, we obtain
\[
\mathcal{T}=\set{(t,q_{\BV}(t))}{t\in[0,T]} \, \cup \,
\bigcup_{t\in J{q_{\BV}}} \set{(t,y^t(\theta))}{\theta\in[0,1]}.
\]
By construction, $\mathcal{T}$ is a connected curve that has exactly
the length $\VAR(q,[0,T])+T$ if we use the extended metric
$ d_\extQ((t_0,q_0),(t_1,q_1)):=|t_0{-}t_1|+d(q_0,q_1) $ on
$\extQ$. Hence, there exists an absolutely continuous
parametrization of $\mathcal{T}$, and it can be shown that this
parametrized curve is a parametrized metric solution. Indeed, in
Example~\ref{ex:5.4} we shall show, that to a given BV solution,
there may correspond infinitely many distinct parametrized metric
solutions.

On the other hand, we can pass from parametrized metric solutions
$(\wh t,\wh q)$ defined in $[s_0,s_1]$ to
BV solutions by choosing
\begin{equation}
  \label{eq:7}
  \sigma (t)\in \set{s\in [s_0,s_1]}{\wh t(s)=t }
  \quad\text{and defining}\quad
  q(t):=\wh{q}(\sigma (t))\,.
\end{equation}
Hence, $\rmJ_q=\set{t\in
[0,T]}{\sigma (t^+)>\sigma (t^-)}$, and we see that
$q(t)$ is uniquely determined
from $\wh{q}$ for $t\in[0,T]\setminus \rmJ_q$. At the jump times $t$ we
can in fact choose any point $q(t)=\wh{q}(s)$ with
$s\in[\sigma (t^-),\sigma (t^+)]$.
Note that
\begin{equation}
  \label{eq:1}
  y^t(\theta):=\wh{q}\big({\RRSS  \sigma(t^-) \RREE}+\theta
  [\sigma (t^+){-}\sigma (t^-)]\big),\quad
  \theta \in [0,1], \quad
  t\in \rmJ_q
\end{equation}
defines a connecting jump path
as desired in \eqref{e.BVc}.
We collect these remarks in the next proposition, whose proof
easily follows from Lemma \ref{le:chain} (see also
the Remark \ref{rem:nonde}).

\begin{proposition}
  \label{prop:equivalence}
   \RRSS In the setting of \eqref{assq},
\eqref{asserre}, \eqref{assene}, \RREE
  let $q_{\BV}\in \BV([0,T];\calQ)$ be a BV solution of the
  rate-independent system $(\calQ,d,\calE)$ and let $\wh \gph=(\wh t,\wh q)$
  be a map as in Lemma \ref{le:chain}.  Then, setting
  \begin{equation}
    \label{eq:8}
    \wh z(s):= \left\{\ba{cl} \wh q(s)&\text{if }s\in \wh\rmC_q\,,\\
      y^t(\theta)&\text{if }s\in \wh\rmJ_q,\
      \wh t(s)=t,\
      s=(1-\theta)\sigma (t^-)+\theta \sigma (t^+) \text{ for }
      \theta\in[0,1]\,,\ea \right.
  \end{equation}
  the map $(\wh t,\wh z):[0,S]\to \extQ$ is a
  parametrized metric solution of $(\calQ,d,\calE)$
  according to Definition \ref{def3.2}.

  Conversely, if $(\wh t,\wh q):[s_0,s_1]\to \extQ$ is a
   surjective  parametrized metric solution
   (i.e.\ $\wh t(s_0)=0$ and $\wh t(S_1)=t)$,
   then any map $q$ defined as in \eqref{eq:7}
  is a BV solution.
\end{proposition}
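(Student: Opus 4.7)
My plan is to exploit the characterizations already collected in Proposition~\ref{prop:char} and the detailed change-of-variable information of Lemma~\ref{le:chain}, so that verifying the equivalence reduces to matching pieces of the energy-dissipation identity on the continuity set $\rmC_q$ and on the jump set $\rmJ_q$ separately. Throughout I shall use the fact, already implicit in Proposition~\ref{prop:char}, that for a parametrized metric solution the conditions \eqref{e:conclu1} and \eqref{eq:equivalent3} are equivalent to the energy inequality \eqref{e.DefPaMeSln_c}; and that for a BV solution \eqref{e.BVen} holds as an equality by the chain rule \eqref{e4.5}.

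For the direct implication (BV $\Rightarrow$ parametrized), I would first argue that the map $(\wh t,\wh z):[0,S]\to\extQ$ defined by \eqref{eq:8} is well-defined and $1$-Lipschitz with respect to $d_\extQ$: on $\wh\rmC_q$ this follows from Lemma~\ref{le:chain}\,B); on each jump interval $[\sigma(t^-),\sigma(t^+)]$ the curve $y^t$ is absolutely continuous with total length bounded by $d(q(t^-),q(t^+))\le\sigma(t^+)-\sigma(t^-)$, and I would pick a specific reparametrization of $y^t$ with constant speed to ensure the non-degeneracy \eqref{e.DefPaMeSln_b} (this is also the content of Remark~\ref{rem:nonde}). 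Monotonicity \eqref{e.DefPaMeSln_a} is immediate since $\wh t$ is the right-continuous inverse of $\sigma$. To check the energy identity \eqref{e:conclu1} plus \eqref{eq:equivalent3}, I split almost every $s\in(0,S)$ into two cases: (i) if $s\in\wh\rmC_q\setminus\sigma(\rmJ_q)$, then $\wh t'(s)>0$ and by Lemma~\ref{le:chain}\,D) $|\wh z'|(s)=|\wh q'|(s)$, so \eqref{e.BVa}--\eqref{e.BVb} translate directly into \eqref{eq:equivalent3}, while the diffuse energy balance coming from the chain rule applied to $\wh E=\calE\circ\wh\gph$ yields \eqref{e:conclu1}; (ii) if $s$ lies inside a jump interval, $\wh t'(s)=0$, and \eqref{e.BVc}$(\beta)$, $(\gamma)$ give $\Slope\calE{\wh t(s)}{\wh z(s)}\ge 1$ together with the pointwise energy identity. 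Then I apply the reverse implication in Proposition~\ref{prop:char}\,C).

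For the converse (parametrized $\Rightarrow$ BV), I would define $q$ by \eqref{eq:7}, identify $\rmJ_q$ with the set of jump points of $\sigma$ (equivalently, the images $\wh t(I)$ of maximal intervals $I$ on which $\wh t$ is constant), and verify the four conditions \eqref{e.BVen}--\eqref{e.BVc} in turn. The slope bounds \eqref{e.BVa} and \eqref{e.BVb} follow directly from \eqref{eq:equivalent3}: at a continuity point $t$ of $q$ both left and right $\sigma$-values make $\wh t'>0$ a.e.~nearby, forcing $\Slope\calE t{q(t)}\le 1$, while at points in $\sppt(\mu_q)$ one finds $s$-intervals where $|\wh q'|>0$, yielding $\Slope\calE{\cdot}{\cdot}\ge 1$. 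For \eqref{e.BVc} I would take $y^t$ as the reparametrization in \eqref{eq:1}; properties $(\alpha)$ and $(\beta)$ are built in, and $(\gamma)$ is the integral of \eqref{e:conclu1} over the jump interval $[\sigma(t^-),\sigma(t^+)]$, where $\wh t'=0$. Finally, integrating \eqref{e.DissipationId} over $[\sigma(t_0),\sigma(t_1)]$ and splitting the integral according to the decomposition $\mu_\gph=\calL+\mu_q^\cont+\mu_q^\discr$ from Lemma~\ref{le:chain}\,A) and C) reconstructs exactly $\Sigma_0(q,[t_0,t_1])$, giving \eqref{e.BVen}.

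The main obstacle I anticipate is precisely this bookkeeping of the dissipation across jumps. One needs to be careful that the boundary terms $\dS{0}{t_0}{q(t_0)}{q(t_0^+)}$ and $\dS{0}{t_1}{q(t_1^-)}{q(t_1)}$ in the definition of $\Sigma_0$ match the contributions coming from the two end intervals of the $s$-decomposition, and that the choice of $q(t)$ inside $[\sigma(t^-),\sigma(t^+)]$ in \eqref{eq:7} does not affect the energy balance — which is guaranteed by \eqref{e:conclu1} together with $\wh t'=0$ on jump intervals. Once this splitting is handled cleanly via \eqref{e.change_of_var} and \eqref{e.densities}, the remaining verifications are routine consequences of Proposition~\ref{prop:char} and Remark~\ref{rem:nonde}.
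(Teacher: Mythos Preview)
Your approach is exactly what the paper has in mind: the paper's own ``proof'' consists of a single sentence deferring to Lemma~\ref{le:chain} and Remark~\ref{rem:nonde}, together with the discussion surrounding \eqref{eq:7}--\eqref{eq:1}, and your proposal is the natural detailed expansion of that sketch via Proposition~\ref{prop:char}.

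One small correction: your claim that the length of $y^t$ is bounded above by $d(q(t^-),q(t^+))$ is backwards --- since $y^t$ connects $q(t^-)$ to $q(t^+)$ through $q(t)$, its length is at least $d(q(t^-),q(t))+d(q(t),q(t^+))=\sigma(t^+)-\sigma(t^-)$, and nothing prevents it from being strictly larger. The correct finiteness argument for $\sum_{t\in\rmJ_q}\int_0^1|(y^t)'|\,\dd\theta$ comes instead from \eqref{e.BVc}$(\beta)$--$(\gamma)$: since $\Slope\calE t{y^t(\theta)}\ge 1$, one has $\int_0^1|(y^t)'|\,\dd\theta\le\calE(t,q(t^-))-\calE(t,q(t^+))$, and the sum of these energy drops is controlled by the (finite) right-hand side of \eqref{e.BVen}. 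This gives absolute continuity of $(\wh t,\wh z)$ on $[0,S]$; the non-degeneracy \eqref{e.DefPaMeSln_b} on jump intervals indeed may fail for the specific parametrization in \eqref{eq:8} and requires the reparametrization you already flag via Remark~\ref{rem:nonde}. With this adjustment the rest of your argument goes through.
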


 The next result shows that  BV solutions can be directly obtained
as a vanishing viscosity limit, as
in Theorem \ref{3.3},  but now rescaling is not  needed.
The imposed \emph{a priori} bound on the total variation  for the
viscosity solutions
$q_\eps$ can be easily obtained from the energy inequality
\eqref{e3.9} under general assumptions on $\calE$, see e.g.\
\eqref{e:en-2}.

\GCOMMENT{\OLD{For this theorem we do not need the nondegeneracy
  condition;
  the convergence holds even if there would be a loss in
  the total length of the limit solution.
  Under the chain rule assumption, this loss is prevented,
  but in principle we could have some convergence result
  even if the chain rule does not hold, e.g. in
  for bad functionals where one should also take the
  l.s.c. relaxation of the slope.
  Of course, in this case one has a weaker
  notion of solution.}}

\begin{corollary}[Vanishing viscosity limit (II)]
  \label{cor:viscosity}
  Let $(\calQ,d,\calE)$ satisfy \eqref{assq}, \eqref{asserre}, and
  \eqref{assene}. For every $\eps >0$ let $q_\eps\in\AC([0,T]; \calQ)$ be a
  solution to \eqref{e3.9} for $\psi=\psi_\eps$. Assume that $q_\eps(0)\to
  q_0$ as $\eps\searrow0$ and $\VAR(q_\eps,[0,T])\le C$ for all $\eps>0$ with
  a constant $C$ independent of $\eps$.  Then, there exist a subsequence
  $q_{\eps_k}$ with $\eps_k\searrow0$ and a BV solution $q$ for
  $(\calQ,d,\calE)$ such that $q_{\eps_k}(t)\to q(t)$ as $k\to\infty$ for
  all $t\in [0,T]$.
\end{corollary}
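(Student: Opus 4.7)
The strategy is to reduce to the vanishing viscosity result for parametrized metric solutions (Theorem \ref{3.3}) via an arclength-type reparametrization of the viscous trajectories, and then to extract the BV solution from the limiting parametrized metric solution by means of Proposition \ref{prop:equivalence}. The pointwise convergence will be obtained from a Helly selection argument on the reparametrizing maps.

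First I would introduce $s_\eps(t):=t+\int_0^t|q_\eps'|(r)\,\dd r$ for $t\in[0,T]$; the uniform variation bound gives $S_\eps:=s_\eps(T)\le T+C$, and $s_\eps$ is strictly increasing. Setting $\wh t_\eps:=s_\eps^{-1}:[0,S_\eps]\to[0,T]$ and $\wh q_\eps:=q_\eps\circ\wh t_\eps$, a direct chain-rule computation yields $\wh t_\eps'+|\wh q_\eps'|\equiv 1$ a.e.\ on $[0,S_\eps]$, and $(\wh t_\eps,\wh q_\eps)$ is a parametrized solution of the viscous inequality \eqref{e3.11}. I would then extend $(\wh t_\eps,\wh q_\eps)$ to the common interval $[0,T{+}C]$ by the constant value $(T,q_\eps(T))$; since $M_\eps(0,0,\xi)=0$, the extended pair still satisfies \eqref{e3.11} (possibly degenerately) and $\wh t_\eps$ remains surjective onto $[0,T]$. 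The hypotheses of Theorem \ref{3.3} are then fulfilled with $s_0=0$, $s_1=T{+}C$, $t_{0,\eps}=0$, and $m_\eps=\mathbf 1_{[0,S_\eps]}$ bounded in $L^\infty$, hence weakly precompact in $L^1$. The theorem produces, along a subsequence $\eps_k$, a degenerate parametrized metric solution $(\wh t,\wh q)\in\AC([0,T{+}C];\extQ)$ with $\wh t(0)=0$, $\wh t(T{+}C)=T$, and $(\wh t_{\eps_k},\wh q_{\eps_k})\to(\wh t,\wh q)$ uniformly.

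Next, I would apply Helly's selection theorem to the uniformly bounded nondecreasing family $\{s_{\eps_k}\}$: up to a further subsequence, $s_{\eps_k}(t)\to\sigma(t)$ for every $t\in[0,T]$. Combined with the uniform convergence of $\wh q_{\eps_k}$ this gives
\[
  q_{\eps_k}(t)=\wh q_{\eps_k}(s_{\eps_k}(t))\to \wh q(\sigma(t))=:q(t)\qquad\text{for all }t\in[0,T],
\]
which is the asserted pointwise convergence; passing the identity $\wh t_{\eps_k}\circ s_{\eps_k}=\mathrm{id}_{[0,T]}$ to the limit yields $\wh t\circ\sigma=\mathrm{id}_{[0,T]}$, so $\sigma$ is a Borel selection of $\wh t^{-1}$ and $q$ coincides with the BV-reduction \eqref{eq:7} of the surjective parametrized metric solution $(\wh t,\wh q)$. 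Lower semicontinuity of the total variation under pointwise convergence gives $\VAR(q,[0,T])\le C$.

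The main obstacle lies in the last step: verifying that the $q$ so constructed satisfies all four conditions of Definition \ref{def:BVsln}. This is precisely the content of Proposition \ref{prop:equivalence}, but some care is needed in handling possible degeneracies of $(\wh t,\wh q)$ (subintervals on which both derivatives vanish, corresponding to stalled motion which does not contribute to $q$). I would first pass to the nondegenerate reparametrization of Remark \ref{rem:nonde}, which leaves the trajectory $\{(\wh t(s),\wh q(s))\}$ and hence the map $q$ unchanged, and then apply Proposition \ref{prop:equivalence}: the admissible connecting jump paths required by \eqref{e.BVc} are furnished by formula \eqref{eq:1} on the intervals $[\sigma(t^-),\sigma(t^+)]$, while the BV energy identity \eqref{e.BVen} follows from \eqref{e.DissipationId} via the change-of-variable formulas \eqref{e.change_of_var}--\eqref{e.densities} of Lemma \ref{le:chain}, concluding the proof.
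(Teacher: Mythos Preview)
Your proposal is correct and follows essentially the same route as the paper: arclength reparametrization $s_\eps(t)=t+\int_0^t|q_\eps'|$, Helly on the monotone maps $s_\eps$, Theorem~\ref{3.3} for the parametrized curves, and then Proposition~\ref{prop:equivalence} to read off the BV solution via $q(t)=\wh q(\sigma(t))$.

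The only noteworthy difference is how you pass to a common parameter interval. You extend $(\wh t_\eps,\wh q_\eps)$ by constants to $[0,T{+}C]$, which forces $m_\eps=\mathbf 1_{[0,S_\eps]}$ and hence a possibly degenerate limit on $[S^*,T{+}C]$; you then repair this with Remark~\ref{rem:nonde}. The paper instead uses a linear rescaling $s\mapsto s\,S_\eps/S$, so that $m_\eps\equiv S_\eps/S\to1$ uniformly; by Remark~\ref{rem:pres} the limit is then automatically the arclength parametrization and no separate nondegeneracy step is needed. Both devices work, but the rescaling is a little cleaner since it keeps you entirely within the nondegenerate setting and lets Proposition~\ref{prop:equivalence} apply directly.
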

\begin{proof}
  Let us consider the functions $\sigma_\eps$ as in Lemma
  \ref{le:chain}. By Helly's selection theorem we
  can find subsequences $(q_{\eps_k})_k,(\sigma_{\eps_k})_k$
  converging pointwise in $[0,T]$.
  Let us consider the corresponding parametrized metric solutions
  $(\wh t_\eps,\wh q_\eps)$
  introduced in  Proposition \ref{prop:equivalence}.
  Since $\sigma_\eps$ is absolutely continuous with $\sigma_\eps'\ge1$,
  differentiating the identity
  $\sigma_\eps(t)=\wh t_\eps(\sigma_\eps(t))+
  \wh\rho_\eps(\sigma_\eps(t))$ we obtain
  $m_\eps:=\wh t_\eps'+|\wh q_\eps'|=1$ a.e.\ in $(0,S_\eps)$ and
  $S_\eps:=\sigma_\eps(T)\ge T$.
  Since $S_{\eps_k}$ converges to $S\ge T>0$, up to a further linear rescaling
  it is not restrictive to assume that $S_{\eps_k}=S$ and $m_{\eps_k}=
  S_{\eps_k}/S\to 1$.

  Applying Theorem \ref{3.3} we can find suitable subsequences
  (still labelled $\eps_k$) such that
  $( \wh{t}_{\eps_k}, \wh{q}_{\eps_k})  \to (\wh{t} , \wh q)\
  \text{ in } \rmC^0([0,S];\extQ))$.
  Since $q_\eps(t)=\wh q_\eps(\sigma_\eps (t))$ and
  $t=\wh t_\eps(\sigma_\eps(t))$, we easily get
  $q_{\eps_k}(t)\to \wh q(\sigma(t))$ and $\wh t(\sigma(t))=t$,
  so that $q$ is a BV solution induced by $(\wh t,\wh q)$ as in
  \eqref{eq:7}.
  \end{proof}

\OLD{Using this identification between parametrized metric and BV solutions
we also see easily that the construction of $\Sigma$ was such that along the
corresponding solutions we have
\begin{equation}\label{e.SigmaPM_BV}
\Sigma(q,[\wh t(\sigma_1),\wh t(\sigma_1)]) = \int_{\sigma_1}^{\sigma_1}
\Slope\calE{\wh t(\sigma)}{\wh q(\sigma)}\,|\wh q'(\sigma)| \dd \sigma.
\end{equation}}


We conclude the section with the \textbf{Proof of Lemma \ref{le:chain}:} \\
Part \textbf{A)} is immediate. Part \textbf{B)} is an obvious extension of
\cite[2.5.16]{Fede69GMT}, since each couple of points in $\calQ$ can be
connected by a geodesic.  Notice that $\wh\tau(V_q(t))=t$ and therefore
$\wh\tau(\wh\rho (\sigma (t)))=t$ if $t\in \rmC_q$. We thus get $\wh\tau\circ
\wh\rho = \wh t$ in $\wh\rmC_q$.

  In order to prove \textbf{C)} it is not restrictive to assume
  $A=[0,T]$ and $\zeta\in \rmC^0_{\rmc}([0,T])$.
  Then, \eqref{e.change_of_var} follows from
  \eqref{e.Stieltjes} and \cite[2.5.18(3)]{Fede69GMT},
  observing that
  \begin{displaymath}
    \calL\big(\set{s\in [0,S]}{\wh t(s)<t}\big)=V_q(t)
    \text{ for all }t\in \rmC_q,\quad
    \calL\big(\set{r\in [0,R]}{\wh\tau(r)<t}\big)=V_q(t).
  \end{displaymath}
  Let us now prove the first identity of \eqref{e.densities}:
  setting $\wh\rmJ_q:= \wh t^{-1}(\rmJ_q)=[0,S]\setminus \wh\rmC_q$,
  we observe that
  $\wh t'(s)=0$ for $\calL$-a.e.\ $s\in \wh\rmJ_q$.
  Since $\wh t$ is Lipschitz continuous and monotone,
  the change of variable formula and \eqref{e.change_of_var} yield, for every
  continuous function $\zeta $ with compact support in $(0,T)$,
  \begin{displaymath}
    \int_0^T \zeta(t)\dd t=\int_0^S \zeta(\wh t(s))\wh t'(s)\dd s=
    \int_{\wh\rmC_q} \zeta(\wh t(s))\wh t'(s)\dd s=
    \int_{\rm C_q} \zeta(t)\, \wh t'\circ \sigma (t)\,{\RRSS \mu_{\gph}^\cont(\rmd t)
    . \RREE}
  \end{displaymath}
  The second identity of \eqref{e.densities} follows by a similar
  argument:
  \begin{align*}
     \int_0^T \zeta(t)\, \mu^\cont_q(\rmd t)&=
     \int_{\rmC_q} \zeta(t)\, \mu_q(\rmd t)=
    \int_{\wh\tau^{-1}(\rmC_q)} \zeta(\wh\tau(r))\dd r=
    \int_{\wh\rho^{-1}(\wh\tau^{-1}(\rmC_q))} \zeta(\wh\tau(\wh\rho (s)))
    \wh\rho'(s) \dd s\\&=
    \int_{\wh \rmC_q}\zeta(\wh t(s))\wh\rho'(s)\dd s=
    \int_{\rmC_q} \zeta(t) \wh\rho'\circ \sigma (t)\,\mu_x(\rmd t)=
    \int_0^T \zeta(t) \wh\rho'\circ \sigma (t)\,{\RRSS \mu_{\gph}^\cont(\rmd t)
    . \RREE}
  \end{align*}
  The identity $\wh\rho'\circ \sigma =|\wh q'|\circ \sigma $
  follows from the property $V_q(t)=V_{\wh q}(\sigma (t))$ for all $t\in
  \rmC_q$,
  so that $V_{\wh q}(s)=\wh\rho (s)$ for all $s\in \wh\rmC_q$. \hfill
  \rule{1ex}{1ex}

\section{Other solution concepts}
\label{s:other.sol}


Here we discuss other notions of solutions for rate-independent
systems $(\calQ,d,\calE)$, namely energetic solutions, local and
approximable solutions, and $\Phi$-minimal solutions.

\subsection{Energetic solutions}
\label{ss:en-sln}

The concept of energetic solutions provides the most general
setting, in the sense that it does not even rely on a
differentiability structure like the Finsler metric $\calR$, but
only uses
 the distance $d$. \RRSS In such a framework \RREE it is even
possible to consider quasi-metrics (i.e. unsymmetric and allowed to
take the value $\infty$), cf.\ \cite{Miel05ERIS}.

\begin{definition}\label{d:4.1}
  A mapping $q:[0,T]\to \calQ$ is called \emph{energetic solution} for
  the rate-independent system $(\calQ,d,\calE)$ if for all $t\in[0,T]$
  the global stability \ITEM{(S)} and the energy balance \ITEM{(E)} hold:
\[
\ba{l}
\ITEM{(S)}\quad\forall\, \wt{q}\in
\calQ:\quad\quad\calE (t,q(t))\leq
\calE (t,\wt{q})+d(q(t),\wt{q});\\
\ITEM{(E)}\quad\calE (t,q(t))+\VAR(q,[0,t])
=\calE (0,q(0))+\int^t_0\pl_s\calE (s,q(s))\dd s\,.
\ea
\]
\end{definition}
We refer to \cite{MieThe99MMRI,MiThLe02VFRI} for the origins of this
theory and to \cite{Miel05ERIS} for a survey. In analogy with
\eqref{e.BVjump} we have the jump relations
\begin{equation}\label{e.SEjump}
\ba{l}
\calE(t,q(t^-)){-}\calE(t,q(t))=d(q(t^-),q(t)), \\
\calE(t,q(t)){-}\calE(t,q(t^+))=d(q(t),q(t^+)), \\
\calE(t,q(t^-)){-}\calE(t,q(t^+))=d(q(t^-),q(t^+)),
\ea
\end{equation}
for all $t\in \rmJ_q$. Here they are easily obtained by considering
the energy identity $\calE (s,q(s))+\VAR(q,[r,s]) =\calE
(r,q(r))+\int^s_r\pl_\tau\calE (\tau,q(\tau))\dd \tau$, which
follows immediately \RRSS from \RREE (E), for the intervals
$[t{-}h,t]$, $[t,t{+}h]$, and $[t{-}h,t{+}h]$, respectively, and
letting $h\searrow 0$.

To compare energetic and BV solutions, we
introduce the \emph{global slope} $\LIP[\calE(t,\cdot)]:\calQ\to
[0,\infty]$ via
\[
\LIP[\calE(t,\cdot)] (q):= \sup_{\wt{q} \neq q} \frac{\big(\calE
(t,q)-\calE (t,\wt{q})\big)^+}{d(q,\wt{q})} \quad {\RRSS\text{for
all$\, q \in D\,.$}\RREE}
\]
Using this definition, the global stability (S) \RRSS can obviously
be rephrased \RREE as $\LIP[\calE(t,\cdot)] (q(t)) \leq 1$. We also
have
\begin{equation}
\label{glob-loc}
\Slope\calE tq \leq \LIP[\calE(t,\cdot)] (q)
\quad \text{ for all } (t,q)\in \extQ \,.
\end{equation}
Indeed, choosing a local coordinate system  in
$\calQ $, one can check (cf.\ \cite[Ch.VI.2]{BCS00IRFG}) that
\begin{equation}\label{e3.8}
\Slope\calE tq=\sup_{v\in
  \rmT_q\calQ \setminus\{0\}}
\frac{\langle\rmD_q\calE (t,q),v\rangle}{\calR_1(q,v)}
  =\limsup_{\wt{q}\to q}\frac{(\calE (t,q) -
\calE (t,\wt{q}))^+}{d(q,\wt{q})} \,,
 \end{equation}
whence \eqref{glob-loc}.

\begin{remark}\label{rm:EnergSln}
  \upshape
It is well known that (S) implies the lower energy estimate
\[
\calE(s,q(s))-\calE(r,q(r))-\int_r^s \pl_t\calE(t,q(t))\dd t \geq -
\VAR(q,[r,s])
\]
for $0\leq r<s\leq T$, cf.\ \cite[Thm.\,2.5]{MiThLe02VFRI} and
\cite[Prop.\,5.7]{Miel05ERIS}. In the present setting this is in
fact an easy consequence of the chain rule inequality \eqref{e4.5}
and of the observation that $\LIP[\calE(t,\cdot)](q(t))\leq C$
implies $\Gamma(q,[r,s]) \leq C\VAR(q,[r,s])$.

Moreover, it is possible to derive a gradient-flow like inequality
of the type given in \eqref{e3.9},  \eqref{e.DefPaMeSln_c}, or
\eqref{e.BVineq}. For this, define the functional
$\Gamma_*(\cdot,[r,s])$ on $\BV([0,T],\calQ)$ via
\begin{align*}
\Gamma_*(q,[r,s]):={}&
\int_r^s \max\big\{ \LIP[\calE(t,\cdot)](q(t)),1\big\} \mu^\cont_q(\dd
t)
+
\int_r^s\big( \LIP[\calE(t,\cdot)](q(t))-1\big)^+\dd t
\\
 &+ \max\{|\calE(r,q(r)){-}\calE(r,q(r^+))|,d(q(r),q(r^+))\} \\
 & +  \max\{|\calE(s,q(s^-)){-}\calE(s,q(s))|,d(q(s^-),q(s))\} \\
 &+\sum_{t\in(r,s)\cap \rmJ_q} \big[\max\{|\calE(t,q(t^-)){-}\calE(t,q(t))|,
   d(q(t^-),q(t))\}\\[-0.5em]
 &\hspace{4em} +
     \max\{|\calE(t,q(t)){-}\calE(t,q(t^+))|,d(q(t),q(t^+))\} \big]\,.
\end{align*}
Then, $q:[0,T] \to \calQ$ is an energetic solution for $(\calQ,d,\calE)$ if
and only if $\LIP[\calE(0,\cdot)](q(0))\leq 1$
and
\begin{equation}\label{e.ESineq}
\calE (s,q(s))-\calE (r,q(r))-\int^s_r\pl_t\calE (t,q(t))\dd t
 \leq -\Gamma_*(q,[r,s]) \text{ for } 0\leq r < s\leq T\,.
\end{equation}
\end{remark}

The following result essentially states that every energetic
solution $q$ is  a BV solution outside its jump set. Moreover, if
the jump relations \eqref{e.BVjump} and \eqref{e.SEjump} are both
satisfied, then an energetic solution is also a BV solution.
Conversely, if a BV solution additionally satisfies
$\LIP[\calE(t,\cdot)](q(t))\leq 1 $, then it is  an energetic
solution as well.

\begin{proposition}[Comparison between energetic and BV solutions]
\label{p:ene-bv}
Assume that $(\calQ,d,\calE)$ satisfies \eqref{assq}, \eqref{asserre},
and \eqref{assene}.
\\[0.3em]
(A) If $q \in \BV([0,T];\calQ)$ is an energetic solution, then $q$ is also a BV
solution if and only if \eqref{e.BVc} holds additionally.
\\[0.3em]
(B) If $q \in \BV([0,T];\calQ)$ is a BV solution with
$\LIP[\calE(t,\cdot)](q(t)) \leq \max\{1,\Slope\calE t{q(t)}\} $
for all $ t \in [0,T]$ and $\LIP[\calE(0,\cdot)](q(0))\leq 1$,
then $q$ is also an energetic solution.




\end{proposition}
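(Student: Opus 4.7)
For the forward direction, I would first combine the global stability (S) with \eqref{glob-loc} to obtain $\Slope\calE t{q(t)} \leq \LIP[\calE(t,\cdot)](q(t)) \leq 1$ for every $t$, which gives \eqref{e.BVa} immediately. The pointwise bound $\Slope \leq 1$ then implies, via the competitor formula \eqref{e.4.1bis}, that $\dS 0 t{q_0}{q_1} \leq d(q_0,q_1)$ and $\int_r^s \Slope\calE \tau{q(\tau)}\,\mu_q^\cont(\dd\tau) \leq \mu_q^\cont([r,s])$, so that $\Sigma_0(q,[r,s]) \leq \VAR(q,[r,s])$. Conversely, the energy balance (E) rewritten as $-\VAR(q,[r,s]) = \calE(s,q(s)) - \calE(r,q(r)) - \int_r^s \pl_t\calE\,\dd t$ combined with the chain rule inequality \eqref{e4.5} yields $\Sigma_0 \geq \VAR$. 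Hence $\Sigma_0 = \VAR$, which is precisely \eqref{e.BVen}. The saturation forces $\Slope\calE t{q(t)} = 1$ for $\mu_q^\cont$-a.e.\ $t$, and by continuity of the slope (from \eqref{assene} and \eqref{asserre}) this extends to every $t \in \sppt(\mu_q^\cont) \subseteq \rmC_q$; on $\rmJ_q$ the assumed condition \eqref{e.BVc}$(\beta)$ gives $\Slope\calE t{q(t)} \geq 1$ since $q(t) = y^t(\theta^t)$ lies on the jump curve. Together these yield \eqref{e.BVb}. The backward implication is trivial, since \eqref{e.BVc} is part of the definition of a BV solution.

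\textbf{Part (B): stability.} I would first establish (S), i.e.\ $\LIP[\calE(t,\cdot)](q(t)) \leq 1$ for every $t$. On $\rmC_q$, \eqref{e.BVa} yields $\Slope \leq 1$ and the standing hypothesis $\LIP \leq \max\{1,\Slope\}$ then gives $\LIP \leq 1$. For $t \in \rmJ_q$, stability at the one-sided limits $q(t^\pm)$ follows by passing to the limit in (S) along sequences in $\rmC_q$ approaching $t$, exploiting the continuity of $\calE$ and using $\LIP[\calE(0,\cdot)](q(0)) \leq 1$ to initialize; a consequence is $\calE(t,q(t^-)) - \calE(t,q(t^+)) \leq d(q(t^-),q(t^+))$. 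To obtain (S) at the value $q(t)$ on the jump curve itself, I would combine \eqref{e.BVjump}, the hypothesis, and \eqref{e.BVc}: indeed continuity of $\Slope$ together with $\Slope \leq 1$ on $\rmC_q$ (from \eqref{e.BVa}) and $\Slope\calE t{y^t(\theta)} \geq 1$ along $y^t$ (from \eqref{e.BVc}$(\beta)$) pins $\Slope\calE t{q(t^\pm)} = 1$, and the curve $y^t$ may be split so that stability propagates from the endpoints to any intermediate $q(t) = y^t(\theta^t)$.

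\textbf{Part (B): energy balance.} With (S) in hand, (E) reduces to $\Sigma_0(q,[0,t]) = \VAR(q,[0,t])$ via the BV energy equality \eqref{e.BVen}. For the diffuse part, \eqref{e.BVa} and \eqref{e.BVb} imply $\Slope = 1$ on $\sppt(\mu_q^\cont)$, so $\int \Slope\,\mu_q^\cont = \mu_q^\cont$. At each jump time $t$, \eqref{e.BVjump} reads $\dS 0 t{q(t^-)}{q(t^+)} = \calE(t,q(t^-)) - \calE(t,q(t^+))$; stability at $q(t^+)$ delivers the bound $\leq d(q(t^-),q(t^+))$, while the curve $y^t$ from \eqref{e.BVc} with $\Slope\calE t{y^t(\theta)} \geq 1$ yields $\dS 0 \geq \int_0^1 |y^{t'}|(\theta)\,\dd\theta \geq d(q(t^-),q(t^+))$, so $\dS 0 = d$ at every jump and $\Sigma_0 = \VAR$. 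The main obstacle is verifying (S) at an interior value $q(t)$ of a jump curve: the hypothesis $\LIP \leq \max\{1,\Slope\}$ does not by itself downgrade $\LIP$ to $\leq 1$ once $\Slope > 1$, and one must crucially exploit the generalized gradient-flow structure encoded in \eqref{e.BVc}$(\gamma)$ to close the argument.
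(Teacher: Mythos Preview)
Your Part~(A) contains a genuine gap. The claim that the trajectory bound $\Slope\calE t{q(t)}\le 1$ yields $\dS 0 t{q_0}{q_1}\le d(q_0,q_1)$ via the competitor formula \eqref{e.4.1bis} is incorrect: the slope distance integrates $\Slope\calE t{y(s)}$ along a connecting curve $y$ between $q_0$ and $q_1$, and you have no control over the slope at arbitrary points $y(s)\in\calQ$---the bound you have holds only along the trajectory $t\mapsto q(t)$, not throughout $\calQ$. In fact, combining \eqref{S_bounds_E} with the energetic jump relation \eqref{e.SEjump} produces the \emph{opposite} inequality $\dS 0 t{q(t^-)}{q(t)}\ge d(q(t^-),q(t))$. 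The repair is to actually use \eqref{e.BVc}: it yields (as explained after \eqref{e.BVjump}) the identity $\dS 0 t{q(t^-)}{q(t)} = \calE(t,q(t^-)) - \calE(t,q(t))$, and \emph{together with} \eqref{e.SEjump} this pins $\distS_0 = d$ at every jump, whence $\Sigma_0 = \VAR$ and \eqref{e.BVen} follows.

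Your Part~(B) is left incomplete, as you acknowledge. The obstacle you name---that the hypothesis cannot force $\LIP[\calE(t,\cdot)](q(t))\le 1$ at a jump value where $\Slope\calE t{q(t)}>1$---is real, and no pointwise propagation along the jump curve will close it. The paper sidesteps the issue entirely by invoking the single-inequality characterizations of Proposition~\ref{prop:BV_alter} and Remark~\ref{rm:EnergSln}: a function is a BV solution iff \eqref{e.BVineq} holds, and an energetic solution iff $\LIP[\calE(0,\cdot)](q(0))\le 1$ together with \eqref{e.ESineq}. One then checks $\Gamma_*\le\Sigma_1$ term by term: the hypothesis $\LIP[\calE(t,\cdot)](q(t))\le\max\{1,\Slope\calE t{q(t)}\}$ handles the two integral terms, while at jumps both $d(q(t^-),q(t))\le \dS 1 t{q(t^-)}{q(t)}$ (always, since $\max\{\Slope\calE t\cdot,1\}\ge 1$) and $|\calE(t,q(t^-)){-}\calE(t,q(t))|\le \dS 1 t{q(t^-)}{q(t)}$ (by \eqref{S_bounds_E}) hold. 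The same device handles (A) in the reverse direction, showing $\Sigma_1\le\Gamma_*$ once \eqref{e.BVc} supplies the jump identities \eqref{e.BVjump}; this is both shorter than your route and avoids the faulty step.
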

\begin{proof}
To prove (A), we first note that the necessity of \eqref{e.BVc} is
obvious at it is part of the definition of BV solutions. To
establish the sufficiency in (A), we observe that \eqref{e.BVc}
yields \eqref{e.BVjump}, so that the dissipation term $\Sigma_1$ of
Proposition \ref{prop:BV_alter} satisfies
\begin{displaymath}
  \Sigma_1(q,[t_0,t_1])\le \Gamma_*(q,[t_0,t_1])\quad
  \text{for all }0\le t_0<t_1\le T.
\end{displaymath}
\RRSS Hence, \RREE \eqref{e.BVineq} follows from \eqref{e.ESineq}.
Thus, statement (A) is established.

The necessity of the additional condition on $\LIP[\calE(t,\cdot)](q(t))$ is
obvious, since energetic solutions have to satisfy the stronger stability
condition (S). To show the sufficiency
we observe that the additional condition
yields
\begin{displaymath}
  \Gamma_*(q,[t_0,t_1])\le \Sigma_1(q,[t_0,t_1])\quad
  \text{for all }0\le t_0<t_1\le T,
\end{displaymath}
so that \eqref{e.ESineq} follows from \eqref{e.BVineq}.
\end{proof}

\begin{remark}
\label{r:geod-conv} \upshape
The additional condition in Proposition \ref{p:ene-bv}(B) is implied
by the general condition
\begin{equation}\label{e:Lip-slope}
\LIP[\calE(t,\cdot)](\wh q)=
\Slope\calE t{\wh q} \quad
\text{ for all } (t,\wh q)\in \extQ\,.
\end{equation}
If this condition holds, then the notions of energetic solutions and
BV solutions coincide under the additional assumption that the
initial state $q_0$ is stable, i.e.\ $\LIP[\calE(0,\cdot)](q_0)\leq
1$.  One condition guaranteeing \eqref{e:Lip-slope} is  a
metric version of convexity  for $\calE(t,\cdot)$, see
\cite[Def.\,2.4.3]{AmGiSa05GFMS}.  Here, we say that ${\RRSS
\mathcal{F} \RREE}:\calQ\to \R\cup\{\infty\}$ is convex on
$(\calQ,d)$, if
\begin{equation}
\begin{aligned}
 \forall\, q_0,q_1\in \calQ,\ \theta\in [0,1]\
 \exists\,q_\theta\in\calQ:
 \ &
d(q_0,q_\theta)=\theta d(q_0,q_1),\ d(q_\theta,q_1)=(1{-}\theta
)d(q_0,q_1), \\ & {\RRSS \mathcal{F} \RREE}(q_\theta)\leq
(1{-}\theta) \, {\RRSS \mathcal{F} \RREE}(q_0) + \theta \, {\RRSS
\mathcal{F} \RREE}(q_1) .
\end{aligned}\label{eq:13}
\end{equation}

\GCOMMENT{\OLD{The term ``geodesically'' is not completely correct here,
since $q_\theta$ is not necessarily a geodesic. On the other hand,
the old definition impose convexity on all geodesics and implicitely
assume the existence of at least one geodesic.}}

To establish \eqref{e:Lip-slope} for ${\RRSS \mathcal{F} \RREE}$
note that for each $\wh q$ and $\eps>0$ we have $\LIP[{\RRSS
\mathcal{F} \RREE}](\wh q)\geq \frac{{\RRSS \mathcal{F} \RREE}(\wh
q){-}{\RRSS \mathcal{F} \RREE}(\wt q)}{d(\wh q,\wt q)}{-}\eps$ for
some $\wt q$. Moreover, for each $n\in \N$, choosing $\theta=1/n$ we
find $q_n$ with $d(\wh q,q_n)=\frac1n d(\wh q,\wt q)$ and $d(q_n,\wt
q)=\frac{n-1}{n} d(\wh q,\wt q)$. Applying \eqref{eq:13} we obtain
\begin{align*}
&|\pl_q{\RRSS \mathcal{F} \RREE}|(\wh q)\geq \limsup_{n\to \infty}
              \frac{{\RRSS
\mathcal{F} \RREE}(\wh q){-}{\RRSS \mathcal{F} \RREE}(q_n)}{d(\wh
q,q_n)}
 \geq \limsup_{n\to \infty}
   \frac{{\RRSS
\mathcal{F} \RREE}(\wh q){-} \frac{n-1}{n}{\RRSS \mathcal{F}
\RREE}(\wh q)-
    \frac1n {\RRSS
\mathcal{F} \RREE}(\wt q)}{\frac1n\,d(\wh q,\wt q)}\\
&= \frac{{\RRSS \mathcal{F} \RREE}(\wh q){-}{\RRSS \mathcal{F}
\RREE}(\wt q)}{d(\wh q,\wt q)} \geq \LIP[{\RRSS \mathcal{F}
\RREE}](\wh q){-}\eps \,.
\end{align*}
 In this way, part (B) of Proposition~\ref{p:ene-bv} is a generalization
to the metric setting of Theorem 3.5 in \cite{MieThe04RIHM}, which states
that for a Banach space $\calQ$, a convex energy functional $\calE$, and a
translation invariant metric $d$ the subdifferential formulation and the
energetic formulation for $(\calQ,d,\calE)$ are equivalent.
\end{remark}

\subsection{Local  and approximable solutions}
\label{ss:local-approx}

As we  have already mentioned in the introduction, energetic
solutions have the disadvantage that the stability condition {(S)}
is global, so that  solutions tend to jump earlier than expected,
see Example~\ref{ex:5.1}. To avoid these early jumps, the vanishing
viscosity method was employed in
\cite{EfeMie06RILS,DDMM07?VVAQ,ToaZan06?AVAQ,KnMiZa07?ILMC}. When
avoiding parametrization and studying the limits of the viscous
solutions $q_\eps:[0,T]\to \calQ$ directly, one obtains an {\em
energy inequality} and  a \emph{local stability} condition.  Hence,
we next introduce the notions of \emph{local} solution and of {\em
approximable} solution, generalizing the definitions given in
\cite{ToaZan06?AVAQ} to the metric setting.

\begin{definition}\label{d4.2}
 A mapping $q:[0,T]\to \calQ$ is called \emph{local solution}, if
\ITEM{(a1)} and \ITEM{(a2)} hold:
\[
\ba{ll}
\ITEM{(a1)}& \Slope\calE t{q(t)} \leq 1
 \quad\foraa t\in[0,T];\\
\ITEM{(a2)}& \text{for all }  r,s\in[0,T]\text{ with }
   r<s\text{ we have}\\
  & \calE (s,q(s))+\VAR(q,[r,s]) \leq
  \calE(r,q(r))+\int^s_r\pl_t\calE (\tau,q(\tau))\dd \tau.
\ea
\]
\end{definition}
\noindent We will see in the examples of~Section~\ref{s:examples}
that the notion of local solution is very general.  Using
\eqref{glob-loc}, it is clear that all energetic solutions are local
solutions. Similarly, all BV solutions are local solutions. \RRSS To
see  this, we  use \RREE \eqref{e.BVb} and \eqref{e.BVc} to obtain
\ITEM{(a1)}, since there are at most a countable number of jump
points, and we conclude ${\RRSS \Sigma_0 \RREE}(q,[r,s]) \geq
\VAR(q,[r,s])$ for $ 0 \leq r\leq s \leq T$, which gives
\ITEM{(a2)}.

On the other hand, note that, unlike the case of energetic
solutions, the combination of the local stability condition with the
energy inequality does not provide full information on the solution
$q$. In particular, the behavior of the solution at jumps is poorly
described by relations \ITEM{(a1)} and \ITEM{(a2)}.  This also
highlights the role of the term ${\RRSS \Sigma_0 \RREE} (q,\cdot)$,
here missing, in the energy identity for BV solutions. As a
consequence there are many more local solutions, see also Example
\ref{ex:5.1}.

Indeed, the vanishing viscosity method turns out to provide a
selection criterion for local solutions. Among local solutions, we
thus distinguish the following ones:

\begin{definition}
\label{d4.5} A mapping $q:[0,T]\to \calQ$ is called \emph{approximable
solution}, if there exists a sequence $(\eps_k)_{k\in\N}$ with
$\eps_k\searrow 0$ and solutions $q_{\eps_k}\in\AC([0,T],\calQ)$
of \eqref{e3.9} with $\psi=\psi_{\eps_k}$ such that for all
$t\in[0,T]$ we have $q_{\eps_k}(t)\to q(t)$.
\end{definition}

%
It follows from Corollary~\ref{cor:viscosity} that,
under the assumptions
\eqref{assq}, \eqref{asserre}, and \eqref{assene},
any approximable solution is a BV solution as well.
%

The notion of approximable solutions suffers from two drawbacks. First of all,
there is no direct characterization of the limits in terms of a
subdifferential inclusion or variational inequality, unlike for
parametrized/BV solutions, recall Proposition~\ref{p3.equiv.evol}.
Secondly, since the solution set is defined through a limit procedure, it is
not upper semicontinuous with respect to small perturbations, as shown in
Example~\ref{ex:5.3}. This is in contrast with the stability properties of the
set of parametrized/BV solutions, see
Remark~\ref{rem:point-forward-stability}.

\subsection{Visintin's $\Phi$-minimal solutions}
\label{ss:Vis.sol}

In \cite{Visi01NAE,Visi06?MPE} a new minimality principle was
introduced. Here, we present the adaptation to  rate-independent
evolutions proposed in \cite[Sect.5.4]{Miel05ERIS}, in the current
metric setting. Again, we use parametrized curves, as it is
essential to have continuous paths. For simplicity, we restrict to
arclength parametrization, i.e., \begin{equation}\label{e4.10}
t'(s)+|q'|(s)=1\quad\foraa  s\in [s_0,s_1]. \end{equation}  In the
framework of \eqref{assq}, \eqref{asserre}, \eqref{assene}, for a
given initial pair $(t_0,q_0)$ we introduce the space of
arclength-parametrized paths (on some interval $[0,S]$) starting in
$(t_0,q_0)$ via
\[
\ba{ll}
\calA_S(t_0,q_0):=&\set{(t,q)\in\rmC^0([0,S],\extQ)}{t(0)=t_0,
 \ t \text{ nondecreasing}, \\
&\quad q(0)=q_0, \ t(s)+\VAR(q,[0,s])=s\quad\text{for all
}s\in[0,S]}. \ea
\]
On this set we define the function
$\Phi:\calA_S(t_0,q_0)\to\rmL^\infty([0,S])$ via
\[
\Phi[t,q](s)=\calE (t(s),q(s))+\VAR(q,[0,s])
-\int_0^s\pl_t\calE (t(r),q(r))t'(r)\dd r \,.
\]
Between paths in ${\RRSS \calA_S \RREE}(t_0,q_0)$ we \RRSS
introduce \RREE an order relation $\preceq$ as follows. For $(t,q)$,
$(\tau,p)\in\calA_S(t_0,q_0)$, define the ``arclength of equality''
via
\[
\bfS[(t,q),(\tau,p)]=\inf\set{s\in[0,S]}{(t(s),q(s))\not=(\tau(s),p(s))}.
\]
 Then, the
order relation is given by
\[
(t,q)\preceq(\tau,p) \quad \Longleftrightarrow  \quad
\left\{
\ba{l}
\forall\, s> \bfS[(t,q),(\tau,p)]\  \ \exists\,
s_*\in (\bfS[(t,q),(\tau,p)],s):\\
\quad\; \; \Phi[(t,q)](s_*)\leq\Phi[(\tau,p)](s_*).
\ea \right.
\]

\begin{definition}\label{d:4.5}
  An arclength-parametrized function $(t,q):\, [s_0,s_1]\to\extQ$ is
  called a \emph{$\Phi$-minimal solution} for $(\calQ,d,\calE )$, if for all
  $(\tau,p)\in\calA_{s_1-s_0}(t(s_0),q(s_0))$ we have $(t,q)\preceq(\tau,p)$.
\end{definition}

Like  for  energetic solutions, this solution notion appears
particularly suitable to handle nonsmooth energy functionals, since
no derivatives/slopes  of $\calE$ with respect to the variable $q$
occur in the definition of the functional $\Phi$.

We  now show that, in a smooth setting,
$\Phi$-minimal solutions are parametrized metric solutions.
%
 Using suitably chosen test functions, it can be shown that
a necessary condition for $\Phi$-minimality is the local condition
\[
\frac{\rmd}{\rmd s}\Phi[(t,q)](s)\leq
 \calN (t(s),q(s))\quad\foraa s \in (s_0,s_1),
\]
where $ \calN :\extQ\to \R$ is defined via
\[
 \calN (t,q)=\liminf_{\eps\to 0} \Big( \frac1\eps
\inf\set{\calE (t,\wt{q}){-}\calE
  (t,q){+}d(q,\wt{q})}{d(q,\wt{q})\leq\eps}\Big)\,.
\]
A simple calculation gives
\[
 \calN (t,q)=\left\{ \ba{cl}
0&\text{for }\Slope\calE tq\leq 1,\\
1-\Slope\calE tq&\text{for
}\Slope\calE tq\geq 1\ea \right. \quad \forall\,
(t,q) \in \extQ\,.
\]
Since  $ t'+|q'|=1$ a.e.\ and
\[
\frac{\rmd}{\rmd s}\Phi[(t,q)](s)=\frac{\rmd}{\rmd
s}\calE (t(s),q(s))+|q'|(s)-\pl_t\calE (t(s),q(s))t'(s)
\quad\foraa s \in (s_0,s_1),
\]
we conclude that all $\Phi$-minimal solutions satisfy a.e.\ in
$(s_0,s_1)$
\[
\frac{\rmd}{\rmd
s}\calE (t(s),q(s))-\pl_t\calE (t(s),q(s))t'(s)\leq
-\wt{M}\big(t'(s),|q'|(s),\Slope\calE {t(s)}{q(s)}\big),
\]
together with the constraint $t'(s)+|q'|(s)=1$, where
$\wt{M}(\alpha,\nu,\xi)=\nu+(\xi-1)^+$. We have thus proved that any
$\Phi$-minimal solution $(t,q)$ on $[s_0,s_1]$ is  a parametrized
metric solution, and hence a BV solution (up to a
parametrization).
 The
opposite is in general not true, see Example~\ref{ex:5.2}. Further,
Example~\ref{ex:5.3} shows that the set of $\Phi$-minimal solutions
is not stable with respect to perturbations.

\section{Outlook to the analysis in metric spaces}
\label{s:outlook}

In \cite{MiRoSa08?VVLM} we shall analyze rate-independent evolutions in
\begin{equation}
\label{ass-metric} \text{a complete metric space $(\mespace,d)$\,.}
\end{equation}
In fact, using the results in \cite{RoMiSa08MACD} we shall be able to
handle the case in which $d$ is a quasi-distance on $\mespace$, i.e.\
possibly unsymmetric and possibly taking the value $\infty$.

In this framework, the \emph{metric velocity} \eqref{e:1-mder} of
a curve $q\in\AC([0,T];\mespace)$ is defined by
\begin{equation}\label{e3.7}
|q'|(t):=\lim_{h\searrow 0}\frac{1}{h}d(q(t),q(t{+}h))=
\lim_{h\searrow 0}\frac{1}{h}d(q(t{-}h),q(t)) \quad \foraa\, t \in
(0,T)\,.
\end{equation}
In the Finsler setting \eqref{assq}--\eqref{asserre} of Section~\ref{sez3.1},
one indeed has $|q'|(t)=\calR_1 (q(t),q'(t))$ for a.a.\ $t \in
(0,T)$ (see \cite[Chap.VI.2]{BCS00IRFG}). Further, given a
functional
 \begin{equation}
\label{e:en-1}
 \calE  :\ [0,T]\ti \mespace\to
\R\cup\{\infty\}, \  \text{ with domain }    \mathrm{dom}(\calE)=
[0,T] \times D
\end{equation}
 for some $D \subset \mespace$, the \emph{local slope} is defined via
\begin{equation}
\label{e3.8-bis} \Slope\calE tq
  =\limsup_{\wt{q}\to q}\frac{1}{d(q,\wt{q})}(\calE (t,q) -
\calE (t,\wt{q}))^+\,,
\end{equation}
which in the Finsler setting $\mespace=\calQ$ coincides with
\eqref{newform2}. With these tools, \eqref{e3.9} is the purely
metric formulation of doubly nonlinear equations of the type
\eqref{dne}, provided the local slope fulfils \RRSS the following
\RREE \emph{chain rule inequality}.

\begin{definition}
\label{d:3.1}  We say that the triple $(\mespace,d,\calE )$
satisfies the chain rule inequality if for every absolutely
continuous curve $(t,q): [s_0,s_1]\to[0,T]\ti \mespace$ such that
$t'\geq 0$ a.e. in $(s_0,s_1)$, $q(s)\in D$ for every $s \in
[s_0,s_1]$, and
\[
\int_{s_0}^{s_1}\Slope\calE {t(s)}{q(s)}\,|q'|(s)+
 |\pl_t\calE (t(s),q(s))|\,t'(s) \dd s<\infty\,,
\]
the map $s\mapsto\calE (t(s),q(s))$ \RRSS is absolutely continuous
on $[s_0,s_1]$ \RREE and satisfies
\begin{equation}\label{e3.6-bis}
\frac{\rmd}{\rmd s}\calE (t(s),q(s))-\pl_t\calE (t(s),q(s))t'(s) \geq -
\Slope\calE {t(s)}{q(s)}|q'|(s) \aein (s_0,s_1).
\end{equation}
\end{definition}

Unlike in Section~\ref{sez3.1}, within this abstract setting the
chain rule inequality is no longer granted, but has to be imposed
instead. We refer to \cite{RoMiSa08MACD} for a discussion on some
sufficient conditions for \eqref{e3.6-bis} to hold.

If the above chain rule holds, the (parametrized) metric formulation
\eqref{e3.11}  is again the starting point for the vanishing
viscosity analysis, which was developed in Section~\ref{ss3.2} in a
Finsler setting for smooth $\calE$. In this general framework,
$\calE$ has to satisfy some coercivity and (lower semi-) continuity
properties:
\begin{equation}
\label{e:en-2}
\begin{aligned}
&\text{the functionals $\calE(t,\cdot)$ are lower semicontinuous and}
\\
&\text{uniformly bounded from below with }
   K_0:=\inf_{t\in [0,T],\,q\in D} \calE(t,q)>-\infty\,;
\\[0.4em]
&\forall\, t \in [0,T]: \  \text{$\calE(t,\cdot)$ has compact sublevels
   in $\mespace$\,;}
\\[0.4em]
&\exists\, K_1 >0\;\forall \, q \in D:\ \calE(\cdot,q) \in
\rmC^1([0,T])
 \text{ and }
\\
&\hspace*{8.5em}  |\partial_t \calE(t,q)|\leq K_1
    (\calE(t,q){+}1) \text{ for all }t\in[0,T]\,;
\\[0.4em]
&\forall\, ((t_n,q_n))_{n\in \N} \subset [0,T]\ti \mespace \text{ with }
 (t_n,q_n) \to (t,q):
\\
& \pl_t \calE(t,q)=\lim_{n \to \infty}\pl_t \calE(t_n,q_n)  \quad
 \text{and}\quad
 \Slope\calE tq \leq \liminf_{n \to \infty}
 \Slope\calE{t_n}{q_n} \,.
\end{aligned}
\end{equation}
In \cite{MiRoSa08?VVLM}, under assumptions \eqref{ass-metric} and
\eqref{e:en-1}--\eqref{e:en-2} on $\calE$, we shall perform the
vanishing viscosity analysis of Theorem~\ref{3.3}, leading to the
notion of parametrized metric solution of the rate-independent
system $(\mespace,d, \calE)$. In this general setting, it is
obviously still possible to consider the notion of BV solution, and
our remarks on the comparison between BV (parametrized) and
local/approximable/$\Phi$-minimal solutions carry over.

Indeed, in \cite{MiRoSa08?VVLM} we shall    discuss BV solutions
with more detail, in particular proving existence  through
approximation by time discretization and solution of incremental
(local) minimization problems.

\section{Examples}
\label{s:examples}

Many of the differences between the various solution concepts
discussed above manifest themselves already in the case in which the
state space is the real line. Hence, we discuss the very simple
model with
\begin{equation}
\label{e:setting} \calQ=\R,\ \   d(q_0,q_1)=|q_0-q_1|\,, \ \  \calE (t,q)= U(q)-\ell(t)q\,,
\end{equation}
where the function $\ell$ will be  specified in the different
examples. The potential $U$ is the nonconvex function given via
\begin{equation}
\label{def:u} U(q)=\left\{ \ba{ll}
\frac{1}{2}(q{+}4)^2&\text{for } q\leq -2,\\
4{-}\frac{1}{2}q^2&\text{for } |q|\leq 2,\\
\frac{1}{2}(q{-}4)^2&\text{for } q\geq 2. \ea \right.
\end{equation}
As initial datum we shall take
\begin{equation}
\label{init-datum} q_0 = -5.
\end{equation}

\begin{example}\label{ex:5.1}\slshape
We let $\ell(t)=t$ for all $t \geq 0$. We claim that the
approximable, the $\Phi$-minimal, the parametrized, and the BV
solutions on $[0,\infty)$ are essentially unique and coincide.
However, the unique energetic solution is different. Moreover, we
show that there is an uncountable family of different local
solutions. With direct calculations, one sees that the energetic
solution takes the form
\[
q(t)=t{-}5\ \text{ for }t\in[0,1)\quad\text{and}\quad
q(t)=t{+}3\ \text{ for }t>1.
\]
Choose any $t_*\in[1,3]$ and any $q_*\in \big[ 3{+}t_*,
3{+}t_*{+}\min \{ 2,4\sqrt{t_*{-}1} \} \big]$. Then,
\[
q(t)=
\left\{
\ba{cl}
t{-5}&\text{for }t\in[0,t_*),\\
q_*&\text{for }t\in(t_*,q_*{-}3],\\
t{+}3&\text{for }t\geq q_*{-}3,
\ea \right.
\]
is a local solution. Note that the starting point of the jump at
$q(t_*-)=t_*{-}5$ can be chosen in a full interval. Moreover, for a
fixed $t_*>1$ we still have the possibility to choose the ending point
$q_*=q(t_*+)$ of the jump in a full interval.

All the other solution types essentially lead  (up to definition in
one point) to the same solution. Without time parametrization it
reads
\[
q(t)=
\left\{
\ba{cl}
t{-5}&\text{for }t\in[0,3),\\
q_*&\text{for }t=3,\\
t{+}3&\text{for }t> 3,
\ea \right.
\]
where $q_*\in[-2,6]$ is arbitrary. The associated
arclength-parametrized solution takes the form
\[
\big(\wh{t}(s),\wh{q}(s)\big)=
\left\{
\ba{cl}
\big(\frac{s}{2},\frac{s}{2}{-}5\big)&\text{for }s\in[0,6],\\
(3,s{-}8)&\text{for }s\in[6,14],\\
\big(\frac{s}{2}{-}4,\frac{s}{2}{-}1\big)&\text{for }s\geq 14.
\ea \right.
\]
\end{example}

\begin{example}\label{ex:5.2}\slshape
  In this example we show that, in general, approximable solutions and
  $\Phi$-minimal solutions are different. In particular, recalling the
  discussions in Sections \ref{ss:local-approx} and \ref{ss:Vis.sol}, this
  shows that the set of BV solutions (or parametrized metric solutions) is
  strictly bigger then any of the other solution sets.

In the setting of \eqref{e:setting}--\eqref{init-datum},  we now
choose the function $\ell(t):=\min\{t,6{-}t\}$ for all $t \geq 0$,
i.e., the loading reduces exactly when the solution reaches the jump
point. It is easy to see that there are two different BV solutions:
$q_1$, which jumps at $t=3$,  and $q_2$, which does not jump. We
have
\[
\ba{ll}
q_1(t)=
\left\{
\ba{cl}
t{-}5&\text{for }\ t\in[0,3),\\
6&\text{for }\ t\in(3,5],\\
11{-}t&\text{for }\ t\in[5,9),\\
3{-}t&\text{for }t\geq 9; \ea \right. & q_2(t)= \left\{ \ba{cl}
t{-}5&\text{for } t\in[0,3],\\
-2&\text{for } t\in[3,5],\\
3{-}t&\text{for } t\geq 5. \ea \right. \ea
\]
 For $\eps>0$ the viscous solution $q^\eps$ of the
differential inclusion
\[
0\in\mathrm{Sign}(\dot{q})+\eps\dot{q}+U'(q)-\ell(t),\quad q(0)=-5,
\]
is unique and can be calculated by matching solutions of linear
ODEs. We find
\[
q^\eps(t)=
\left\{
\ba{cl}
t{-}5{+}\eps(\ee^{-t/\eps}{-}1)&\text{for }\ t\in[0,3],\\
q_*^\eps&\text{for }\ t\in[3,t_*^\eps],\\
3{-}t{+}\eps(\ee^{-(t{-}t_*^\eps)/\eps}{-}1)&\text{for }\ t\geq t_*^\eps,
\ea \right.
\]
where $q_*^\eps=q^\eps(3-)\lesssim -2$ and
$t_*^\eps=3-q_*^\eps\gtrsim 5$. Thus, we have $q^\eps(t)\to q_2(t)$
for every $t \geq 0$ as $\eps \downarrow 0$, and $q_2$ turns out to
be approximable, whereas $q_1$ is not. As a general principle, one
may conjecture that viscosity slows down solutions, and thus
approximable solutions tend to avoid jumps if there is a choice.

For $\Phi$-minimal solutions this seems to be opposite. We claim
that $q_1$ is (up to  a reparametrization) $\Phi$-minimal but $q_2$
is not. For this, we use the arclength parametrizations
\[
(\wh{t}_1,\wh{q}_1)(s)= \left\{\ba{cl}
\big(\frac{s}{2}, \frac{s}{2}{-}5\big)&\text{for }\ s\in[0,6],\\
(3, s{-}8)& \text{for }\ s\in[6,14],\\
(s{-}11, 6)&\text{for }\ s\in[14,16]; \ea \right.
 \quad
(\wh{t}_2,\wh{q}_2)(s)= \left\{\ba{cl}
\big(\frac{s}{2}, \frac{s}{2}{-}5\big)&\text{for }\ s\in[0,6],\\
(s{-}3,-2)&\text{for }\ s\in[6,8],\\
\big(\frac{s}{2}{+}1, 2{-}\frac{s}{2}\big)&\text{for }s\geq 8.
\ea \right.
\]
The functionals $\varphi_j(s)=\Phi[(\wh{t}_j,\wh{q}_j)](s)$ for all
$s \geq 0$, $j=1,2$, can be calculated explicitly: indeed, one
checks that
\begin{equation}
\label{calculations} \varphi_1 (s) =
\left\{\ba{cl} \frac12 & \text{for } s \in[0,6],
\\
\frac12-\frac12(s{-}6)^2 & \text{for } s\in[6,10],
\ea\right.
 \qquad \text{while} \qquad  \varphi_2 (s)= \frac12 \ \text{for } s
\geq 0\,,
\end{equation}
which clearly shows that $(\wh{t}_2,\wh{q}_2)$ is not
$\Phi$-minimal for $s\in[0,7]$.

To prove $\Phi$-minimality of \RRSS $(\wh t_1,\wh q_1)$ \RREE we
point out that chain rule inequality \eqref{e3.6} gives
\begin{equation}
\label{e:argument}
\Phi(\tau(s),p(s))\geq  \frac12 + \VAR(p,[0,s]) - \int_0^s
\Slope\calE{\tau(\sigma)}{p(\sigma)}|p'|(\sigma)\, \dd
\sigma
\end{equation}
for all $ s \in [0,T]$ and all $(\tau,p)\in\calA_{T}(0,q_0)$.
Equality holds in \eqref{e:argument} if and only if $(\tau,p)$ is  a
parametrized metric solution $(\wh{t},\wh{q})$ on $[0,T]$. In that
case, in view of \RRSS \eqref{eq:equivalent1} \RREE one further has,
for all $s \in [0,T]$,
\[
\Phi(\wh{t}(s),\wh{q}(s))= \frac12 + \text{\rm Var}(\wh{q},[0,s]) -
\int_0^s {\RRSS M \RREE} (|\wh{t}'|(\sigma), |\wh{q}'|(\sigma),
\Slope\calE{\wh{t}(\sigma)}{\wh{q}(\sigma)}|\wh{q}'|(\sigma)\, \dd
\sigma \,.
\]
Therefore, in order to check that $(\wh{t}_1,\wh{q}_1)$ is $\Phi$-minimal, it
is sufficient to prove that $ (\wh{t}_1,\wh{q}_1) \preceq (\wh{t},\wh{q}) $
for all parametrized metric solutions $(\wh{t},\wh{q})$, and this, for all the
arclength parametrizations $(\wh{t},\wh{q})$ corresponding to the (not
jumping) BV solution $q_2$, follows from the previous discussion on
$(\wh{t}_2,\wh{q}_2)$.  Now, the above energy balance states a general fact
about parametrized metric solutions: $\Phi(\wh{t},\wh{q})$ is constant as long
as no jumps occur, i.e.  $\Slope\calE{\wh{t}}{\wh{q}}\leq 1$ holds. If
jumps with $\Slope\calE {\wh{t}}{\wh{q}}> 1$ occur, then $\Phi$ will
strictly decrease. Thus, if there is a choice between one solution with a
fast jump and another without jumps, then the solution without
jumps cannot be $\Phi$-minimal.
\end{example}

\begin{example}\label{ex:5.3}\slshape
Here, we study the parameter dependence of solutions under the
loading
\[
\ell_\delta(t)=\min\{t, 6{+} 2\delta {-} t\} \quad \text{for } t \geq
0,
\]
where $\delta$ is a small parameter. In the case $\delta=0$ we have
two BV solutions $q_1$ and $q_2$ (or similarly parametrized metric
solutions), as was discussed in Example \ref{ex:5.2}. For
$-1<\delta<0$ there is only one solution, namely
\[
q^\delta(t)=
\left\{\ba{cl}
t{-}5&\text{for } t\in[0, 3{+}\delta],\\
\delta{-}2&\text{for } t\in[3{+}\delta, 5{+}\delta],\\
3{+}2\delta{-}t&\text{for } t\geq 5{+}\delta .
\ea \right.
\]
The corresponding parametrized solution is the unique $\Phi$-minimal
solution. Now, for $\delta\nearrow 0$ we find $q^\delta(t)\to
q_2(t)$ for every $t \geq 0$. Hence, the set of $\Phi$-minimal
solutions is not closed (or ``not stable'' or ``not upper
semicontinuous'') under pointwise convergence. Similarly, we may
consider $\delta>0$ to obtain a unique BV solution $q^\delta$ that
jumps at time $t=3$ before the unloading starts at $t=3{+}\delta>3$.
Clearly, these solutions are approximable and converge pointwise to
$q_1$, which is not approximable. Thus, the set of approximable
solutions is not upper semicontinuous.
\end{example}

\begin{example}\label{ex:5.4}\slshape
We provide an example where one BV solution corresponds to many
different parametrized metric solutions. The BV solution has
exactly one jump, and there are infinitely many distinct connecting
orbits $y$ in \ITEM{(iv)} of Definition~\ref{def:BVsln}, giving rise to
infinitely many distinct parametrized metric solutions. We consider
\[
\calQ=\R^2, \ \ \text{and} \ \  d(q,\wt{q})= \frac12 \left(|q_1
-\wt{q}_1|+|q_2 -\wt{q}_2| \right).
\]
With $q=(q_1,q_2)\in \calQ=\R^2$ the potential takes the form
\[
\calE(t,q) = U\left(\frac{q_1+q_2}{2}\right)+ W(q_1-q_2) -t
\left(\frac{q_1+q_2}{2}\right),
\]
where $U$ is defined in \eqref{def:u} and $W : \R \to [0,\infty)$ by
$W(\rho)=0$ for $|\rho|<1$ and $W(\rho)=(|\rho|-1)^2$ else. Starting
from $q(0)= (-5,-5)$, we have $q(t) = (\wt{q}(t),\wt{q}(t))$,
$\wt{q}$ being the BV solution of Example~\ref{ex:5.1}. Hence,
the (unique) jump occurs at $t=3$, starting in $(-2,-2)$ and ending
in $(6,6)$. However, the set of connecting paths $y$ is infinite.
Indeed, for every connecting path there holds  for a.a.\ $s \in
(s_0,s_1)$ \[|y'|(s) = \frac12 (|y_1'(s)| + |y_2'(s)|)\,, \quad
|\partial_q \calE(t,\cdot)|(q(s))= \frac12\left|U'\left(\frac{y_1(s)
+y_2 (s)}2\right) -t\right| \]if $|y_1(s) - y_2(s)| \leq 1$.
 Now, for a given curve $\gamma : [0,1]
\to [0,\infty)$ let us set $y_\gamma := (\wt{q}-\gamma,
\wt{q}+\gamma) $. Indeed, $|y_\gamma'|= 1/2 (|\wt{q}'-\gamma'| +
|\wt{q}'+\gamma'|)= |\wt{q}'|$ whenever $|\gamma'| \leq |\wt{q}'|$.
Therefore,
\[
\int_0^1 |\partial_q \calE(t,\cdot)|(y_\gamma(s)) |y_\gamma'|(s)\,
\rmd s =\int_0^1 |\partial_q \calE(t,\cdot)|(y(s)) |y'|(s)\dd s
\]
for all curves $\gamma$ with $\gamma(0)=\gamma(1) =0$ and
$|\gamma'|(s) \leq |\wt{q}'|(s)$ for a.a.\ $s \in (s_0,s_1)$, and for
such $\gamma's$ $y_\gamma$ is an optimal connecting curve.
\end{example}

\renewcommand{\baselinestretch}{0.95}
\small

\bibliographystyle{my_alpha}
\bibliography{alex_pub,bib_alex,MiRoSa_refs}


\end{document}